\newcounter{saveenumi}
\providecommand{\vect}[1]{\boldsymbol{#1}}
\providecommand{\aee}{\text{ a.e.~}}
\renewcommand{\phi}{\varphi}
\providecommand{\ie}{i.e.~}
\providecommand{\eg}{e.g.~}
\DeclareMathOperator{\tr}{tr}
\providecommand{\id}{{{I}}}
\providecommand{\vect}[1]{\mathbf{#1}}
\providecommand{\tr}{\mathrm{tr}}
\providecommand{\ie}{i.e.~}
\providecommand{\eg}{e.g.~}
\newtheorem{theorem}{Theorem}[section]
\newtheorem{result}{Result}
\newtheorem{corollary}[theorem]{Corollary}
\newtheorem{lemma}[theorem]{Lemma}
\newtheorem{proposition}[theorem]{Proposition}
\newenvironment{remark}[1][Remark]{\begin{trivlist}
\item[\hskip \labelsep {\bfseries #1}]}{\end{trivlist}}
\providecommand{\nh}[2][]{\| #1 \|_{#2}}
\providecommand{\norm}[2][]{\left\| #1 \right\|_{#2}}
\providecommand{\tn}[1]{\left|#1\right|} 
\providecommand{\nltwo}[2][]{\left\| #1 \right\|_{#2}}
\newcommand{\N}{{\mathbb{N}}}
\newcommand{\R}{{\mathbb{R}}}
\providecommand{\pt}{\partial_3}
\providecommand{\pa}{\partial_\alpha}
\providecommand{\ua}{u_{\alpha}}
\providecommand{\va}{v{_\alpha}}
\providecommand{\vt}{v{_3}}
\providecommand{\eab}{e_{\alpha\beta}}
\providecommand{\eaa}{e_{\alpha\alpha}}
\providecommand{\ett}{e_{33}}
\providecommand{\eat}{e_{\alpha 3}}
\newcommand{\dx}{\mathrm{d}x}
\let\wto\rightharpoonup
\providecommand{\ke}[1]{\hat\kappa^\e(#1)}
\providecommand{\kev}{{\ke{\vect v}}}
\providecommand{\kttue}{\hat\kappa_{33}^\e(\vue)}
\providecommand{\kabue}{{\hat\kappa}_{\alpha\beta}^\e(\vue)}
\providecommand{\kebl}[1]{\kappa^\e(#1)}
\providecommand{\kttbl}[1]{{\kappa}_{33}^\e(#1)}
\providecommand{\kaabl}[1]{{\kappa}_{\alpha \alpha}^\e(#1)}
\providecommand{\kabbl}[1]{{{\kappa}}_{\alpha\beta}^\e(#1)}
\providecommand{\ktabl}[1]{{\kappa}_{3 \alpha}^\e(#1)}
\providecommand{\kbl}[1]{\kappa^\e(#1)}
\providecommand{\kevbl}{{\kebl{{\vect v}}}}
\providecommand{\kblue}{ \kappa^\e(\vue)}
\providecommand{\kttblue}{ \kappa_{33}^\e(\vue)}
\providecommand{\kaablue}{ \kappa_{\alpha \alpha}^\e(\vue)}
\providecommand{\kabblue}{ \kappa_{\alpha\beta}^\e(\vue)}
\providecommand{\ktablue}{ \kappa_{3 \alpha}^\e(\vue)}
\providecommand{\limkbl}{k}
\providecommand{\limkf}{\hat k}
\providecommand{\limkttf}{\hat k_{33}}
\providecommand{\limkabf}{\hat {k}_{\alpha\beta}}
\providecommand{\limkatf}{\hat k_{ \alpha 3}}
\providecommand{\limktt}{k_{33}}
\providecommand{\limkta}{k_{3 \alpha}}
\renewcommand{\O}{\Omega}
\renewcommand{\o}{\omega}
\providecommand{\G}{\Gamma}
\let\e\varepsilon
\newcommand{\Oe}{{\Omega_\e}}
\let\a=\alpha
\let\g=\gamma
\providecommand{\eab}{e_{\alpha\beta}}
\providecommand{\eaa}{e_{\alpha\alpha}}
\providecommand{\eat}{e_{\alpha 3}}
\providecommand{\ett}{e_{33}}
\providecommand{\eat}{e_{\alpha 3}}
\providecommand{\eit}{e_{i 3}}
\providecommand{\Qab}{Q_{\alpha\beta}}
\providecommand{\Qtt}{Q_{33}}
\providecommand{\Qat}{Q_{\alpha3}}
\newcommand{\urs}{\vect v^{\e,\eta}}
\newcommand{\Qrs}{Q^{\eta}}
\newcommand{\rsel}{\vect v^{\e}}
\newcommand{\rsms}{\vect w^{\e, \eta}}
\newcommand{\ors}{w^{\e, \eta}}
\providecommand{\ue}{u^\e}
\providecommand{\vue}{\vect{u}^\e}
\providecommand{\uea}{\ue_\alpha}
\providecommand{\uet}{\ue_3}
\providecommand{\ut}{{u_3}} 
\providecommand{\ve}{v^\e}
\providecommand{\Qtt}{Q_{33}}
\providecommand{\kabe}[1]{{\kappa}_{\alpha\beta}^\e(#1)}
\providecommand{\ke}[1]{\kappa^\e(#1)}
\providecommand{\kev}{{\ke{v}}}
\providecommand{\kebl}[1]{\kappa^\e(#1)}
\providecommand{\kttbl}[1]{{\kappa}_{33}^\e(#1)}
\providecommand{\kaabl}[1]{{\kappa}_{\alpha \alpha}^\e(#1)}
\providecommand{\kabbl}[1]{{{\kappa}}_{\alpha\beta}^\e(#1)}
\providecommand{\ktabl}[1]{{\kappa}_{3 \alpha}^\e(#1)}
\providecommand{\kbl}[1]{\kappa^\e(#1)}
\providecommand{\kevbl}{{\kebl{v}}}
\providecommand{\kblue}{\hat \kappa^\e(\ue)}
\providecommand{\kttblue}{\hat \kappa_{33}^\e(\ue)}
\providecommand{\kaablue}{\hat \kappa_{\alpha \alpha}^\e(\ue)}
\providecommand{\kabblue}{\hat \kappa_{\alpha\beta}^\e(\ue)}
\providecommand{\ktablue}{\hat \kappa_{3 \alpha}^\e(\ue)}
\providecommand{\limkbl}{\hat k}
\providecommand{\limkf}{k}
\providecommand{\limkttf}{k_{33}}
\providecommand{\limkabf}{{k}_{\alpha\beta}}
\newcommand{\muf}{\mu}
\newcommand{\mub}{\mu}
\newcommand{\laf}{\lambda}
\newcommand{\lab}{\lambda}
\newcommand{\mufr}{K_{\text{Fr}}^\e}
\newcommand{\Of}{{\O_f}}
\newcommand{\Ob}{{\O_b}}
\newcommand{\Oef}{{\O^\e_f}}
\newcommand{\Oeb}{{\O^\e_b}}
\providecommand{\ke}[1]{\kappa^\e(#1)}
\providecommand{\kev}{{\ke{v}}}
\providecommand{\kebl}[1]{\hat\kappa^\e(#1)}
\providecommand{\kttbl}[1]{\hat{\kappa}_{33}^\e(#1)}
\providecommand{\kaabl}[1]{\hat{\kappa}_{\alpha \alpha}^\e(#1)}
\providecommand{\kabbl}[1]{{\hat{\kappa}}_{\alpha\beta}^\e(#1)}
\providecommand{\ktabl}[1]{\hat{\kappa}_{3 \alpha}^\e(#1)}
\providecommand{\kbl}[1]{\hat\kappa^\e(#1)}
\providecommand{\kevbl}{{\kebl{v}}}
\providecommand{\kblue}{\hat \kappa^\e(\ue)}
\providecommand{\kttblue}{\hat \kappa_{33}^\e(\ue)}
\providecommand{\kaablue}{\hat \kappa_{\alpha \alpha}^\e(\ue)}
\providecommand{\kabblue}{\hat \kappa_{\alpha\beta}^\e(\ue)}
\providecommand{\ktablue}{\hat \kappa_{3 \alpha}^\e(\ue)}
\providecommand{\limkbl}{\hat k}
\providecommand{\limkf}{k}
\providecommand{\limkttf}{k_{33}}
\providecommand{\limkabf}{{k}_{\alpha\beta}}
\newcommand{\Qu}{\mathcal Q_U}
\newcommand{\Qx}{\mathcal Q_X}
\newcommand{\Qb}{\operatorname{co}\Qu}
\newcommand{\Qbb}{\mathcal Q_B }
\newcommand{\Fe}{I_\e}
\newcommand{\uk}{\vect u_k}
\newcommand{\infqx}{\inf_{Q\in H^1(\Ob,\Qx)}}
\newcommand{\infqbb}{\inf_{Q\in H^1(\Ob,\Qbb)}}
\newcommand{\infqu}{\inf_{Q\in H^1(\Ob,\Qu)}}
\newsavebox\myboxA
\newsavebox\myboxB
\newlength\mylenA
\newcommand*\xoverline[2][0.75]{%
    \sbox{\myboxA}{$\m@th#2$}%
    \setbox\myboxB\null
    \ht\myboxB=\ht\myboxA%
    \dp\myboxB=\dp\myboxA%
    \wd\myboxB=#1\wd\myboxA
    \sbox\myboxB{$\m@th\overline{\copy\myboxB}$}
    \setlength\mylenA{\the\wd\myboxA}
    \addtolength\mylenA{-\the\wd\myboxB}%
    \ifdim\wd\myboxB<\wd\myboxA%
       \rlap{\hskip 0.5\mylenA\usebox\myboxB}{\usebox\myboxA}%
    \else
        \hskip -0.5\mylenA\rlap{\usebox\myboxA}{\hskip 0.5\mylenA\usebox\myboxB}%
    \fi}
\newenvironment{system}%
{\left\lbrace\begin{array}{@{}l@{}}}%
{\end{array}\right.}
\title{Variational modelling of nematic elastomer foundations}
\author{Pierluigi Cesana, Andrés A. León Baldelli}
\begin{document}

\ifpdf
\DeclareGraphicsExtensions{.pdf, .jpg, .tif}
\else
\DeclareGraphicsExtensions{.eps, .jpg}
\fi

\maketitle

\begin{abstract}
We compute the $\Gamma$-limit of energy functionals describing mechanical systems composed of a thin nematic liquid crystal elastomer sustaining a homogeneous and isotropic elastic membrane. We work in the regime of infinitesimal displacements and model the orientation of the liquid crystal according to the order tensor theories of both Frank and De Gennes. We describe the asymptotic regime by analysing a family of functionals parametrised by the {vanishing} thickness of the membranes and the ratio of the elastic constants,
establishing that, in the limit, the system is represented by a two-dimensional integral functional interpreted as a linear membrane on top of a nematic active foundation involving an effective De Gennes optic tensor which allows for low order states. The latter can suppress shear energy by formation of microstructure as well as act as a pre-strain transmitted by the foundation to the overlying film.
 \end{abstract}

\paragraph{Keywords:} $\Gamma$-convergence, Order Tensor, Linearised Elasticity, Nematic Elastomers, Dimension Reduction, Microstructure
\tableofcontents

\let\oldurs\urs
\let\oldQrs\Qrs
\renewcommand{\urs}{\ensuremath{\vect v^\e}}
\renewcommand{\Qrs}{\ensuremath{Q^\delta}}

\newpage
\section{Introduction} 
\label{sec:introduction}
Nematic Liquid Crystal Elastomers (NLCEs) are a special class of soft Shape-Memory Alloys in which we observe not only shape-recovering induced by large deformations but also soft elastic deformation modes induced by the interplay of mechanical strain, order states, and optic microstructure. 
They are constituted of nematic molecules dissolved and cross-linked within the elastic matrix of a {polymeric} material. The polymeric backbone undergoes deformation as the nematic mesogens reorient driven by external stimuli, and conversely, a mechanical deformation of the structure leads to nematic reorientation in such a way that the director tends to be coaxial with the imposed principal stretches.
Activation of the shape-change mechanism can be triggered by means of external fields (such as electrostatic, magnetostatic as well as electromagnetic), mechanical constraints, and thermal frustration. In this last case, heating a sample of NLCE above a certain critical (Isotropic-to-Nematic transition) temperature $T_{IN}$, activates states of high entropic disorder until the material reaches the high symmetry phase of optical isotropy. From this stage, by cooling down below $T_{IN}$, a backward phase-transition is induced by breaking the high symmetry state thus enforcing order in the system.
The latter manifests as the LC molecules tend to spontaneously align parallel to each other in absence of other external stimuli. Newly formed states  of order  can be described by a unit vector field (the director) denoted by $\vect n$.
The transformation path from the isotropic high-symmetry phase to the nematic low-symmetry  state, which we refer to as Isotropic-to-Nematic phase transformation, is achieved via elastic and reversible deformations which typically show spontaneous uniaxial elongation along the director and contraction transversal to it, as imposed by volume conservation. Furthermore, the director is free to rotate with respect to the polymeric matrix which allows a large set of mechanical and optical instabilities possibly leading to pattern formation
and low order states.
Continuum modelling of LCEs in the framework of non-linear elasticity traces back to the work of the Cambridge group of Bladon, Warner and Terentjev. In a seminal paper~\cite{warner1996nematic}  an entropic elasticity model is derived which describes the novel phenomenon of soft elasticity, that is, the attainment of macroscopic uniaxial strains via formation of very fine shear-bands caused by the reorientation of the nematic director at a small scale.
Mathematically, this phenomenon has been rigorously studied in~\cite{desimone2000material} where the relaxation of the elastic energy of~\cite{warner1996nematic} has been obtained explicitly, thus  explaining the occurrence of nematic microstructure in terms of minimising sequences of energy functionals lacking lower-semicontinuity with the tools of the calculus of the variations.
Building upon this result, a substantial body of literature has appeared on the analytical modelling of NLCEs in both non-linear and linearised elasticity~\cite{desimone2000material,conti2002soft,desimone2002macroscopic,cesana2009strain-order,cesana2010relaxation,cesana2011nematic,cesana2011quasiconvex,agostiniani2011gamma,cesana2015effective,plucinsky2017microstructure}.

Investigation of reduced dimension theories with  emphasis on  membrane elasticity regime has surged in recent years inspired by a series of experiments exploiting the interaction of liquid crystals with  frustrations induced by slender geometries and topology~\cite{greco2013reversible,white2015programmable}.
In NLCE membranes, it has been observed that
nematic microstructure can determine or suppress elastic wrinkles, a mechanism which has suggested the design and realisation of wrinkle-free membranes made of NLCEs with potential application in aeronautics~\cite{cesana2015effective,plucinsky2017microstructure}.
As candidates for the design of electromechanical actuators~\cite{de2012liquid}, 
composite electro-active NLCE structures can be produced by either embedding carbon or ferroelectric particles as well as wires and tubes at nanometric scale, or by adding a conductive superficial layer to a NLCE film.
In the latter case, the thin bilayer structure couples nematic reorientation with {the} elasticity of the coating film, leading to the emergence of novel and rich phenomenology. 
Indeed, in this case, geometric constraints enter in the competition between nematic rigidity and material symmetries which are responsible of pattern formation.
Seeking {to generate} tunable micro-wrinkling patterns, the interaction of nematic microstructure and elasticity in free-standing electroactive bilayer membranes has been experimentally evidenced in~\cite{greco2013reversible}. Indeed, alongside the macroscopic mechanical deformation a periodic pattern of superficial micro-wrinkles is visible at the (free) surface of the LCE (see Figure~\ref{fig:experiment}).

Building up on the non-linear theory of~\cite{desimone2000material}, mathematical modelling of thin NLCEs mono-layers has been first accomplished  by~\cite{conti2002soft} in the context of planar membranes and later in more general geometries by~\cite{cesana2010relaxation}. 
Such modelling framework does account for formation of wrinkling and nematic microstructure, and can be developed toward the complete characterization of multi-layer structures (such as sandwiches and elastic foundations) to systematically account for the intrinsic length-scales introduced by the relative thicknesses and elastic moduli of the various layers.

 In this paper
we concentrate on understanding how  small scale features of the nematic order influence the macroscopic mechanical response of the structure, in the context of the mechanical actuation of thin bilayer NLCE/elastic membranes undergoing purely in-plane deformations (see Figure~\ref{fig:sketch} for a sketch of the three-dimensional system). This will allow {us} to characterise the macroscopic behaviour (in terms of its limiting energy) in the (vanishing thickness) thin film regime, and provide the energetic framework for the conception and analysis of representative experiments {as well as the} determination of solutions, both from the analytic and numerical standpoint. 

Our modelling approach is based on the theory of order tensors~\cite{de-gennes1993the-physics,virga1995variational}, whereby the nematic state variable, denoted by $Q$, accounts for both the local average direction of the LC molecules as well as their states of order.
In perfectly ordered systems, $Q$ is the Frank tensor, a uniaxial matrix with fixed non-zero eigenvalues.
In disordered systems, one may not be able to identify the exact direction of the molecules and rather  incorporate a description of the orientation of the molecules
in probabilistic terms. This has lead to the order tensor of De Gennes \cite{de-gennes1993the-physics}, still denoted by $Q$, within a model which accounts for full biaxial states and includes the case of isotropy, that is $Q=0$.
An extension of the order tensor theory for liquid crystals to the case of NLCEs, both for the Frank and the de Gennes tensor, has been proposed for 3D systems of nematic elastomers in~\cite{cesana2010relaxation,cesana2011nematic}.

We work in the regime of infinitesimal displacements, leaving the full nonlinear elasticity theory for future investigation. Although linearised elasticity has intrinsic limitations (see~\cite{bhattacharya2003microstructure}) it has been proved to approximate nonlinear theories of NLCEs in an energetic sense~\cite{agostiniani2011gamma}, and advantageously
allows to rigorously account for multiphysical phenomena (e.g., electric or magnetic fields and thermal stresses, curvature energies which are typical of liquid crystals) in the context of the order tensor theory thus {enabling}  a model extension which exploits linearity.

The {object} of this paper is the exact computation of the $\Gamma$-limit of a sequence of energy functionals describing a bi-layer composed of a NLCE membrane sustaining an {elastic} isotropic film. The result is two-fold: on one hand, we fully resolve the interaction of optical microstructure and the mechanical instabilities induced by the geometry and dimension reduction, by identifying all the available length-scales and their respective hierarchies. 
On the other hand we {qualitatively} characterise the minimiser of the $\Gamma$-limit in terms of the De Gennes order tensor thus proving that low-order states including perfect isotropy ($Q=0$) can be obtained \emph{effectively} via formation of microstructure while at the microscopic scale the order of the system is assumed to be fixed. This can be considered as the membrane version of the 3D result for NLCEs described in~\cite{cesana2011nematic,cesana2010relaxation}.

\medskip

The architecture of the paper is as follows. After concluding the introduction presenting (formally) the energy functional, 
in Section~\ref{sec:modelling_the_problem} we model the three-dimensional bilayer membrane, sketching the entire phase space which {describes all three-dimensional system as a function of} material parameters. We provide a non-technical statement and discussion of the main results in Section~\ref{sec:membrane}. Section~\ref{sec:proof_of_the_theorems} is dedicated to the proof of the theorems associated to the biaxial and uniaxial problems. Finally, in the Appendix we collect some tools used in the construction of the recovery sequence which apply in a general three-dimensional case.

\begin{figure}[tb]
    \centering
    \includegraphics[width=.65\textwidth]{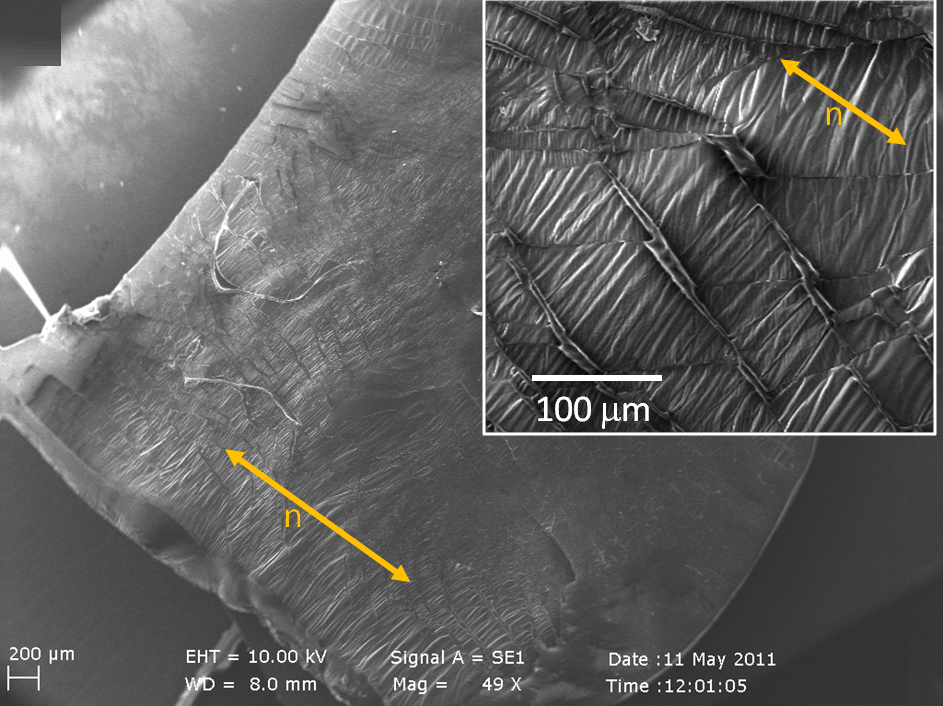}
    \caption{
Sample of a thin film of the conducting polymer poly(ethylenedioxythiophene):poly(styrene sulfonate) (PEDOT:PSS) deposited on a monodomain liquid crystal elastomer (LCE) film. The surface of the LCE/PEDOT:PSS composite shows optical microstructure and membrane wrinkling formed upon heating~\cite{greco2013reversible}.
The direction defining the director alignment  ($\vect n\otimes \vect n$) is shown in orange. The inset on the right shows a closer view of the surface in which uniaxial microwrinkles are formed and
organised in a patterned domain-like arrangement. The image is courtesy of F.~Greco, for a low resolution version see the cited article.
}
\label{fig:experiment}
\end{figure}

\subsection{The energy functional}

In the linearised setting, the bilayer membrane is described by the elastic strain $e(\vect v)$ (a symmetric matrix) and the local nematic order (the optic tensor) $Q$. The former is the symmetrised gradient of the displacement field $\vect v$, the latter is a quantity that accounts, at a mesoscopic scale, for the probability distribution of the orientation of nematic molecules and reflects material symmetries.
When both the degree of nematic order (including biaxiality) and the average direction of the molecules are taken into account as in de Landau-De Gennes theory~\cite{de-gennes1993the-physics}, the optic tensor $Q$ ranges in $\mathcal{Q}_B$, a convex and compact set (defined in Section \ref{sub:the_original_configuration}) which contains biaxial matrices and, amongst them, the null element.
On the other hand, considering the order of the system to be fixed so that the only variable describing nematic molecules is their common direction, the optic tensor is bound to the non-convex set $\Qu$ of uniaxial tensors (a subset of $\mathcal{Q}_B)$.

A typical energy of the structure under study couples the elastic film's deformation to that of the NLCE which, in turn, accounts for the interaction between the nematic monomers with the polymeric matrix as well as for the energetics of local reorientation at the level of nematic molecules. Formally, it can be written as follows
\begin{equation}
\begin{aligned}
\frac{1}{2} \int_{\Oef}
\underbrace{\left( 2\mu_f^\e \tn{e(\vect v)}^2 + \lambda_f^\e \tr^2(e(\vect v)) \right) }_{\text{film elastic energy density}} dx
+\frac{1}{2 }\int_{\Oeb}
\underbrace{\left( 2\mu_b^\e \tn{e(\vect v)-Q}^2 + \lambda_b^\e \tr^2(e(\vect v)) \right) }_{\text{elastic (nematic-prestrained)  density}} dx
+\frac{1}{2}\int_{\Oeb}\!\!\!\!\!\!\!\!\!\!\underbrace{\vphantom{\left( 2\mu_b^\e \tn{e(\vect v)-Q}^2\right)} \mufr \tn{\nabla Q}^2}_{\text{curvature energy density}}\!\!\!\!\!\!\!\!\!\!\!\!\! dx
\end{aligned}
\label{eqn:heuristicenergy}
\end{equation}

On a domain of small thickness, nematic reorientation plays the role of a pre-strain for the nematic elastomer attached to a rigid substrate and the overlying film. Its spatial variations are penalised by the curvature term and its energy is minimised when the optic tensor is coaxial with the elastic strain.
The nematic bonding layer undergoes elastic deformation, and although the latter is soft with respect to the film ($(\lambda_b^\e, \mu_b^\e)\ll (\lambda_f^\e, \mu_f^\e)$), its deformation is constrained by conditions (mismatching, in general) of place (at the interface with the substrate) and of continuity (at the film interface).
This results in mechanical frustration.
On the other hand, a non-trivial deformation of the film inducing deformation of the nematic bonding layer may be accommodated by nematic reorientation, {possibly attaining low energy states}.

\subsection{Notation and Preliminaries} 
Latin indices $i,j$ range within $\{1,2,3\}$ whereas Greek indices $\alpha, \beta$ range within $\{1, 2\}$.
We implicitly parametrise all vanishing quantities by a countable sequence.
We denote by $C$ uniform constants that may change from line to line.
With an overline bar $\xoverline v$ we indicate the thickness average of a map $v$, i.e. $\xoverline v(x') := |b-a|^{-1}\int_a^b v(x', x_3) \dx_3$.
All (but position) vectors are typeset in bold. 
We denote by $\odot $ the outer symmetrised  (tensor) product as in $(a \odot b)_{ij} = 1/2 (a_i b_j + a_j b_i)$ for $\vect a,\vect  b\in \R^3$.
We {employ standard} notation for function spaces, denoting by $L^2(\O;\R^n)$ and $ H^1(\O;\R^n)$ respectively, the Lebesgue space of square integrable functions on $\O$ with values in $\R^n$ and the Sobolev space of square integrable functions with values in $\R^n$ with square integrable weak derivatives on $\O$. 
We shall 
use the concise notation $L^2(\O)$ and $H^1(\O)$ whenever $n = 1$. The norm of a function $u$ in the normed space $X$ is denoted by $\nltwo[ u]{X}$.

\section{Modelling the problem} 
\label{sec:modelling_the_problem}

\subsection{The original configuration} 
\label{sub:the_original_configuration}

\begin{figure}[tb]
    \centering
    \includegraphics[width=.7\textwidth]{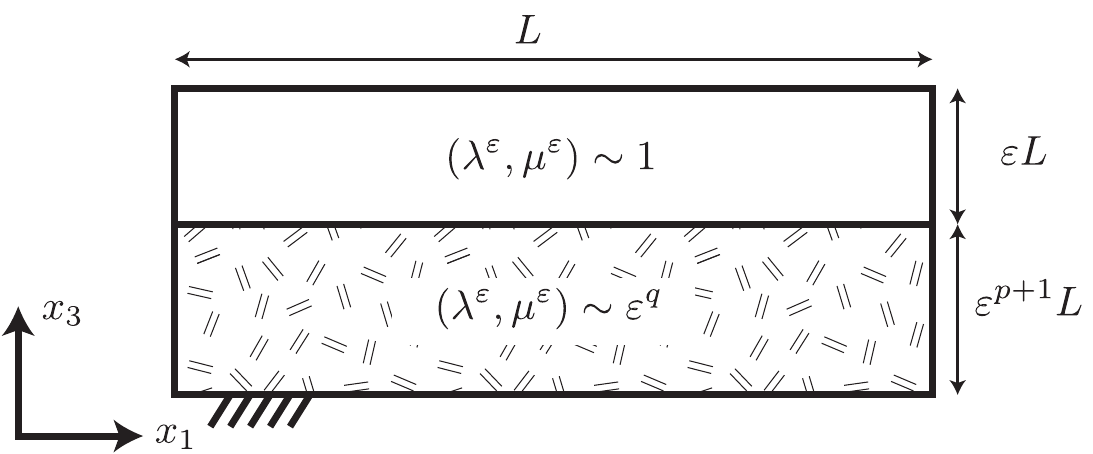}
    \caption{Sketch of the thin three-dimensional bilayer system. A soft nematic elastomer membrane is attached to a rigid substrate and supports a stiff isotropic elastic membrane.}
    \label{fig:sketch}
\end{figure}

\renewcommand{\b}{p}
\renewcommand{\a}{q}

Let $\Oe=\o\times (-\e^{\b+1}, \e)$ be a sufficiently smooth three-dimensional open set, \ie a domain, union of a thin linearly elastic film $\Oef=\o\times(0, \e)$, a soft nematic elastomer  $\Oeb=\o\times (-\e^{\b+1}, 0)$ and the interface $\o\times \{0\}$ separating them, where $\o\subset \R^2$. The domain is attached to a rigid substrate, see Figure~\ref{fig:sketch}. 
We focus on limit \emph{thin} systems requiring that $p+1>0$.
Any kinematically admissible displacement field $\vect v:\Oe\mapsto \R^3$ is required to satisfy the condition of place: $\vect v(x', -1)=0$ a.e. $x'\in\o$.
Accordingly, the space of admissible displacements is 
\[
    \mathcal{V}_\e := \left\{ \vect v\in H^1(\Oe; \R^3): \vect v(x', -1)=0 \text{ a.e. } x'\in \o  \right\}.
\]
We define the set of biaxial (De Gennes) tensors
\begin{eqnarray}\label{defiQQ}
\mathcal{Q}_{B}:=\Bigl\{ Q\in\R^{3\times 3}, \tr Q=0,Q=Q^T:\,\,
-\frac{1}{3}\leq\lambda_{\text{min}}(Q)\leq \lambda_{\text{max}}(Q)\leq
\frac{2}{3}
\Bigr\},
\end{eqnarray}
where $\lambda_{\text{min}}(Q)$ and $\lambda_{\text{max}}(Q)$ denote the
smallest and largest eigenvalue of the matrix $Q$. We remind that
$\mathcal{Q}_{B}$ is convex, closed and bounded. Then, we
introduce the set of uniaxial (Frank) tensors. This model uses only the eigenframe of $Q$ as the nematic state variable which is 
constrained to have eigenvalues $2/3,-1/3,-1/3$. Uniaxial tensors range
in the set
\begin{eqnarray}\label{phd012}
\mathcal{Q}_{U}:= \Bigl\{Q\in\mathcal{Q}_{B}:
\lambda_{\text{max}}(Q)=\frac{2}{3},\,\,
\lambda_{\text{min}}(Q)=-\frac{1}{3} \Bigr\}.
\end{eqnarray}
Notice that, since $\tr Q=0$, this suffices to describe the
spectrum of $Q$. It follows by the definition that
$\mathcal{Q}_{U}$ is a closed and non-convex set
and $\mathcal{Q}_{U}\subset\mathcal{Q}_{B}$.
Importantly, $\mathcal{Q} _B$ coincides with the convex envelope of
$\mathcal{Q}_{U}$.
{Let $O$ be any open set in $\R^n$, $n=2,3$. We define sets of tensors $L^2(O,\mathcal{Q}_X):=\{Q:O\to\mathcal{Q}_X \textrm{ a.e. in } O\}$ where $X$ stands for either $U$ (in the case of uniaxial order tensors) or $B$ (for biaxial tensors). Thanks to the definitions (\ref{defiQQ}) and (\ref{phd012}) it follows that both $L^2(O,\mathcal{Q}_U)$ and $L^2(O,\mathcal{Q}_B)$ are pointwise closed although, when endowed with the weak topology,  $L^2(O,\mathcal{Q}_B)$ is (weakly) closed by convexity while $L^2(O,\mathcal{Q}_U)$ is not. We then introduce the sets  $H^1(O,\mathcal{Q}_X)$, with $X=U$ or $B$ respectively, that is, the subsets of $H^1$-matrices with values in $L^2(O,\mathcal{Q}_X)$, with $X=U,B$.
}

We rewrite~\eqref{eqn:heuristicenergy} the total, unscaled, three-dimensional, mechanical energy of the system 
{for $\vect v\in \mathcal{V}_\e$ and $Q\in H^1(\Oeb, \Qx)$, where $X\in \{B, U\}$ indicates the nematic model at hand}

\newcommand{\Feh}{J_\e}
\begin{equation}
\label{eqn:defenergy}
    \frac{1}{2}\int_{\Oef} \widehat{W}_\e(\vect v) dx+\frac{1}{2}\int_\Oeb \left\{ \mufr|\nabla Q|^2+ W_{\e}(\vect v, Q) \right\}  dx.
\end{equation}
In the present linearly elastic setting, the energy densities $\widehat{W}_\e,W_\e$ are, respectively for the film and nematic layer, given by 
\begin{equation}
\label{eqn:energydensity}
     \widehat{W}_\e(\vect v):=2\muf_f^\e \tn{e(\vect v)}^2 + \laf_f^\e \tr^2(e(\vect v))     \quad\text{ and }\quad 
 W_\e(\vect v, Q):=2\mub_b^\e \tn{e(\vect v)-\g Q}^2 + \lab_b^\e \tr^2(e(\vect v)).
\end{equation}
Here, $\gamma$ is a fixed and positive constant that measures the interaction between $Q$ and the linearised strain $e(\vect v)=1/2 \left( \nabla \vect v +(\nabla \vect v)^T \right)$ which reads, in components, $(e(\vect v))_{ij} = 1/2 \left( \partial_j u_i + \partial_i u_j \right), i, j \in \{1, 2, 3\}$.
We henceforth fix, without loss of generality, $\gamma=1$.

The following parametric scaling law for the Lamé coefficients encodes the relative softness of the nematic elastic membrane with respect to the film
\[
    (\lambda^\e, \mu^\e)(x)=\begin{cases}
    (\laf_f, \muf_f), &x\text{ in }\Oef\\
    \e^\a(\lab_b, \mub_b), &x\text{ in }\Oeb 
\end{cases},\quad \text{with } \a>0.
\]
It is not restrictive for our analysis to set $\muf_f=\muf_b=\muf$ and $\lambda_f=\lambda_b=\lambda$
as we focus on relative order of magnitude, disregarding constants of order one.
Recalling that $E=\mu(3\lambda+2\mu)/(\lambda+\mu)$ and $\nu=\lambda/(2(\lambda+\mu))$, the above scaling law is equivalent to saying that, in terms of order of magnitude, the ratios of Young moduli and Poisson coefficients are {$E_b/E_f = \e^\a$ and $\nu_b/\nu_f=1$}.
Our analysis does not cover the physically relevant case of  incompressible nematic elastomers which could be accounted for by an additional singular perturbation to retrieve a limit linearised incompressibility constraint, e.g., by letting $\lambda_b$ to explode in $\Ob$.
We consider {that} the Lamé coefficients satisfy the classical inequalities established as consequences of three independent \emph{thought} experiments in homogeneous materials (see, eg. \cite{ciarlet1988three-dimensional})
\begin{eqnarray}\label{1711221148}
    \mu>0, \quad\lambda > 0 \quad (\text{and}\quad   2\mu + 3\lambda >0).
\end{eqnarray}
From a mathematical standpoint, the first and last inequalities represent necessary and sufficient conditions for coercivity of the energy functional.
The nematic curvature energy is characterised by $\mufr$, or Frank's stiffness, associated to the energy expense for changes in orientation of nematic molecules. Typical values of the ratio of nematic (Frank's) to elastic (Hooke's) rigidity can be assessed from experiments. Considering a thermotropic liquid crystal embedded in a backbone material ranging from a `weak rubber' to a `rather flexible polyethilene' network (see~\cite{warner2003liquid}), we can estimate
\[
    \hat\delta_\e:=\sqrt{\frac{\mufr}{\mub^\e}} \sim 10-100\cdot10^{-9}m.
\]
That the stiffness constant is related to the existence of a material length scale, heuristically, follows from a scaling argument considering the energetic cost of a transition layer between differently oriented domains.
From a phenomenological standpoint, experimental observation of nematic microstructure (see~\cite{kundler1995strain,bladon1994orientational}), hints that nematic rigidity imparts the system's smallest material length scale
which necessarily implies $\delta_\e \ll \e^{p+1},$
 $\e^{p+1}$ being the thickness of the nematic bonding layer.

The considerations upon material parameters, alongside the scaling of loading terms constitute the necessary assumptions upon the \emph{data} which ultimately determine the limit model.
We focus in the present note upon the inelastic load induced by reorientation of nematic molecules. 
We do not require additional assumptions on the scaling of $Q$ with respect to $\e$. {Consequently, although $Q$ is a loading term, its components do not scale with (i.e. are independent of) $\e$. Indeed, their magnitude in agreement with the underlying liquid crystal theory.}
In the uniaxial case, although the order of magnitude of spontaneous strains is known, their local orientation is a genuine unknown of the problem.
We refrain to explicitly consider loading terms within the film layer because they constitute a continuous perturbation of the unloaded energy functional, with respect to which our convergence result is stable.

\subsection{The rescaled configuration} 
\label{sub:the_rescaled_configuration}

The dependence of the energy functional with respect to the asymptotic parameter $\e$ is implicit in the geometry as well as in the constitutive laws. In order to render the relation explicit, we rescale dependent and independent variables.
For $(y', y_3)\in \Oef$ and $(x', x_3)\in \Of=\o\times (0, 1)$, let 
\begin{equation}
    \vect v(y', y_3) = \left( \e u_\alpha(x', \e x_3), u_3(x',\e x_3) \right)
    \label{eqn:scalingOf}
\end{equation}
and for $(y', y_3)\in \Oeb$ and $(x', x_3)\in \Ob=\o\times (-1, 0)$, let 
\begin{equation}
    \vect v(y', y_3) = \left( \e u_\alpha(x', \e^{\b+1} x_3), {\e^{-1}}u_3(x',\e^{\b+1} x_3) \right)
    \quad \text{ and }\quad Q(y', y_3) = Q(x', \e^{\b+1} x_3).
    \label{eqn:scalingOb}
\end{equation}
The above transformation, on the one hand, maps the $\e$-dependent domain $\Oe$ onto the fixed, unit domain $\O=\o\times (-1, 1)$.
On the other hand, the rescaling of displacements fixes  a specific relative magnitude of out-of-plane vs. in-plane displacements.
This may seem, up to this point, arbitrary. Recall, however, that in the case of purely elastic plates within the linear regime (cf., e.g., ~\cite[Vol 2]{ciarlet1988three-dimensional}), the scaling in the film~\eqref{eqn:scalingOf} is a \emph{result} and is implied, necessarily, by zero limit shear strains.
In the present case, different choices of the scaling of dependent variables would yield different limit models.
We stick to the above scaling because of the richness of phenomena that unfolds.
Ultimately, only comparison with experiments will dissolve the ambiguity within the family, vast indeed, of limit models based on different scaling assumptions.

Making use of the scalings of the dependent variables $\vect v$ in~\eqref{eqn:scalingOf} and~\eqref{eqn:scalingOb} in the energy of~\eqref{eqn:defenergy},  we compute the non-dimensional total energy (relative to the membrane energy of the film), at order $\e^{-3}$
    \begin{multline}
    \frac{\e^{-3}}{2\mu}\Feh(\vect u, Q) =\frac{1}{2}\int_\Of \left\{ \left( \e^{-2}\ett(\vect u) \right)^2  + \left| \eab(\vect u) \right|^2 + 2 \left| \frac{1}{2}\left( \pa \vt + \pt \e^{-1}\va \right) \right|^2 + \frac{\laf}{2\muf} (\eaa(\vect v)+\e^{-1}\ett(\vect v))^2 \right\} dx\\
    +\frac{1}{2}\e^{q}\e^{\b-2}
    \int_\Ob \left\{ \left( \frac{\ett(\vect u)}{\e^{\b+{2}}} -\Qtt \right)^2  + \left| \e\eab(\vect u) - \Qab\right|^2 + 2 \left| \frac{1}{2}\left( {\frac{\pa \ut}{\e}} + \frac{\e\pt \ua}{\e^{\b+1}} \right) -\Qat \right|^2\right\} dx\\
    +\frac{1}{2}\frac{\lab}{2\mu} \e^{q}\e^{\b-2}\int_\Ob \left\{ \left( \e\eaa(\vect u)+\frac{\ett(\vect u)}{\e^{\b+{2}}} \right) ^2  \right\}  dx
    +\frac{1}{2}\e^{q}\e^{\b-2}\int_\Ob 
    \delta_\e^2\left(\e^{2\b+2} |\nabla' Q|^2 +\tn{\pt Q}^2 \right)  dx.
\end{multline}
Note that the energy is written for (rescaled) displacements
\begin{equation}
    \label{eqn:admissiblespace}
    \vect u =(\ua, \ut)\in \mathcal{V} := \left\{ \vect v\in H^1(\O, \R^3): \vect v(x', -1)=0\quad \text{a.e. } x'\in \o  \right\}.
\end{equation}
In the energy above we have introduced the non-dimensional parameter {$\delta_\e^2:={\mufr}/\left(\mub^\e\e^{2\b+2} L^2 \right)=\hat\delta_\e^2/(\e^{2\b+2}L^2)$} where $\hat \delta_\e$ is the (three-dimensional) nematic length.
The nematic length scale $\delta_\e$ is infinitesimal. Indeed, with reference to the material parameters' range of the preceding section $\hat \delta_\e:=\sqrt{\mufr/\mub^\e}\sim 10-100\cdot 10^{-9}m$, and considering a bilayer consisting in two layers of equal thickness, e.g., {$\e L\sim 10^{-3}m$} we have $\delta_\e := \hat\delta_\e/(\e L)\sim 10- 100\cdot10^{-6}\ll 1$. 

In this regime the entire isotropic optic manifold can be exploited in order for the microstructure to attempt relaxing the elastic energy.
We shall see that, indeed, for each target in the limit space, there exists a three-dimensional, uniformly convergent, optimal sequence at scale $\eta_\e$, with $\delta_\e\ll \eta_\e\ll \e^{p+1}$, attaining the energy lower bound.

\let\p\xi
\let\q\zeta
Let us introduce the following non-dimensional quantities
\begin{equation}
\label{eqn:nondimparam}
    \xi:=\frac{\a + \b}{2}-1, \text{ and }
    \zeta:=\frac{\a - \b}{2}-1.	
\end{equation}
and write the non-dimensional scaled energy (per unit thickness and stiffness)
\begin{multline}
    \e^{-3}(2\mu)^{-1}\Feh(\vect u, Q)  =\\
    \frac{1}{2}\int_\Of \left( \left( \e^{-2}\ett(\vect u) \right)^2  + \left| \eab(\vect u) \right|^2 + 2 \left| \frac{1}{2}\left(  \frac{\pa \ut+\pt \ua}{\e} \right) \right|^2 + \frac{\laf}{2\muf} (\eaa(\vect u)+\e^{-2}\ett(\vect u))^2 \right) dx+ \\
    \frac{1}{2}\e^{2\p}\int_\Ob \left\{ \left( \e^{\q-\p}\frac{\ett(\vect u)}{{\e^2}} -\Qtt \right)^2  + \left| \e\eab(\vect u) - \Qab\right|^2 + 2 \left| \frac{1}{2}\left( {\e^{-1}}\pa \ut +  \e^{\q-\p}\pt \ua \right) -\Qat \right|^2\right\} dx\\
    +\frac{1}{2}\e^{2\p}\int_\Ob \frac{ \laf}{2\mu}\left\{  \left( \e\eaa(\vect u)+ \e^{\q-\p}\frac{\ett(\vect u)}{{\e^2}} \right) ^2 \right\} dx
    +\frac{1}{2}\e^{2\p}\int_\Ob \delta_\e^2 \left( \e^{{2\b+2}} \tn{\nabla' Q}^2 +  \tn{\pt Q}^2  \right)  dx.
\end{multline}
Here, $\xi$ and $\zeta$ identify the orders of magnitude of the nematic layer's membrane and shear energies, relative to the film's membrane energy, respectively. 
The geometric constraint $p+1>0$, identifying the regime of two-dimensional limit models, translates into $\zeta<\xi+1$.

A phase diagram, see Figure~\ref{fig:phasediag}, helps the mechanical interpretation of the different physical regimes.
Fixing one parameter, say $\b$, and allowing the other to vary, can be viewed equivalent as fixing the material mismatch and changing the aspect ratio. We start with some heuristic observations which we prove to be rigorous with the upcoming analysis.
The locus $\zeta=0$ represents three-dimensional systems in which the nematic shear energy (dominated by $\e^{2\zeta}\frac{1}{2}\int_\Ob |\pt \ua|^2dx$) is of the same order of the film's membrane energy ($\sim \tfrac{1}{2}\int_\Of |\eab(\vect u)|^2dx$), and hence the coupling. This regime identifies a purely membrane limit behaviour.
Along the line $\zeta=0$ the aspect ratio varies, the segment $\xi\in (-1, 0)$ represents three dimensional systems in which the film is thicker (in terms of order of magnitude) relatively to the nematic layer.
The half line $\xi >0$ identifies the opposite scenario. In the limit case $\xi=0$, both film thicknesses scale at the same rate.
The threshold at {$\e^{p+1}\sim\mathcal O(\delta_\e)$} is the limit of rigidity above which nematic reorientation with respect to $x_3$ is {energetically} unfavourable.
Above the line $\zeta=0$ the energy interaction is enriched by the possibility of out-of-place deformations, typically leading to plate-like models.
Heuristically, $\zeta$ identifies the dominant order of magnitude of the out-of-plane components of displacement (for a discussion, see~\cite{leon-baldelli2015on-the-asymptotic}).
Because a very rich phenomenology ensues from the physical case where coupling emerges between nematic shear and film membrane strain energy, we limit our study to the purely membrane scenario. 
Further,
for definiteness and in order not to weigh upon the notation, we focus on one representative case and henceforth fix without loss of generality $\xi=\zeta = 0$, or $q=2, p = 0$. All results of the present work can be adapted to the segment {$\zeta=0, -1<\xi<\mathcal{O}(\delta_\e)$}.

\begin{figure}[tb]
    \centering
    \includegraphics[width=.8\textwidth]{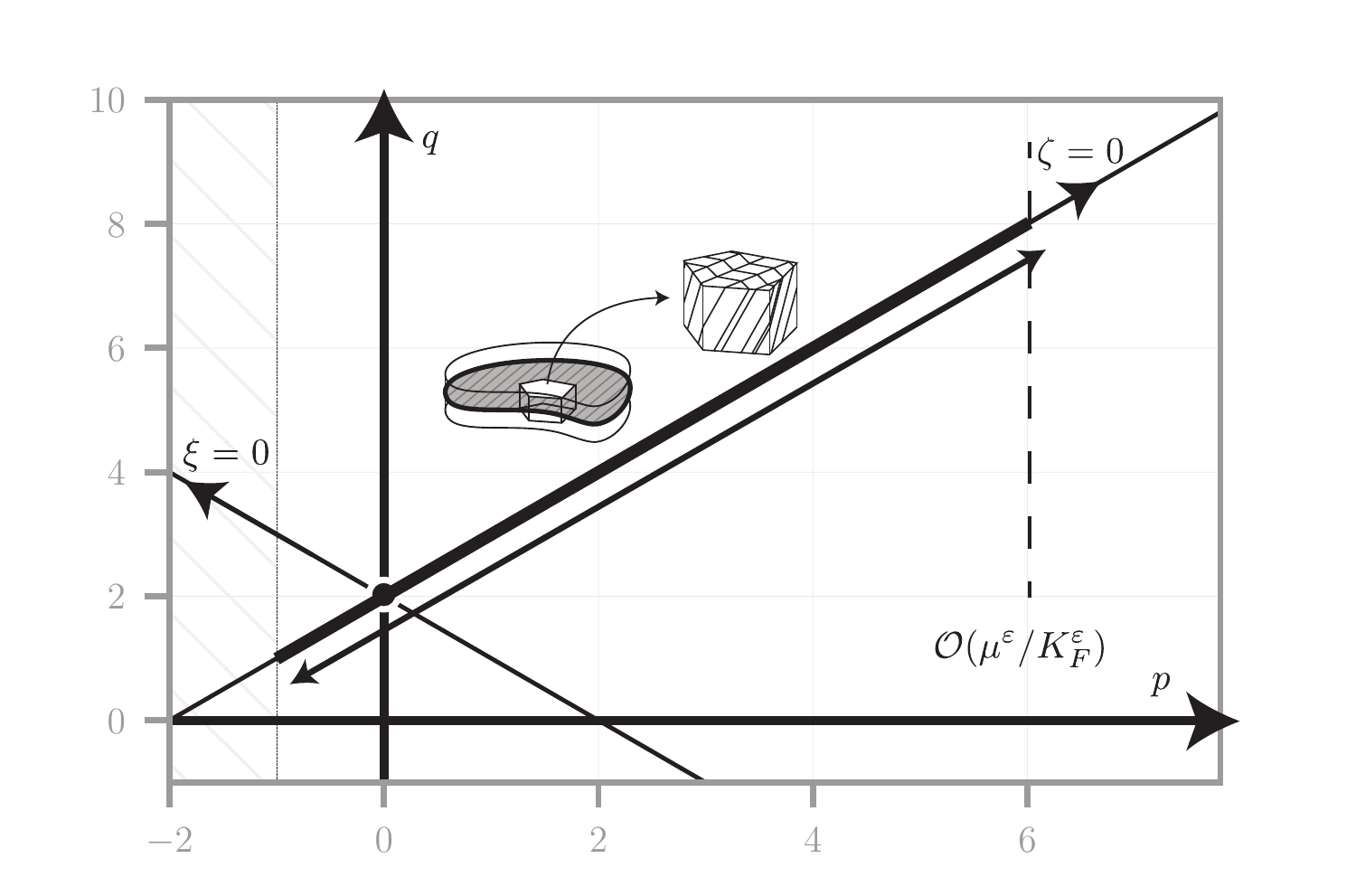}
    \caption{
    Phase diagram of three-dimensional systems. Here,
    the nondimensional parameters $\zeta=\frac{\a - \b}{2}-1$, $\xi=\frac{\a + \b}{2}-1$ identify material regimes. The locus $\zeta=0$ represent membrane-like systems with different aspect ratio, capable of relaxing shear energy by formation of microstructure. In the origin $\xi=\zeta=0$ both layers' thickness scale at the same rate. The vertical lines at {$p$ such that $\e^{p+1}\sim\mathcal O(\delta_\e)$} and $p=-1$ represent the limit of validity of the present asymptotic results.}
    \label{fig:phasediag}
\end{figure}

The expression of the energy above suggests the introduction of \emph{rescaled strains} which, as the analogue of elastic strains in~\eqref{eqn:energydensity}, allow for a compact expression of the energy and encode both the geometric separation of scales and the scaling laws of the elastic coefficients.
Let us define
\begin{equation}
        \kev=
    \begin{pmatrix}
    \eab(\vect v)&\e^{-1}\eat(\vect v)\\
    \text{sym.}&\e^{-2}\ett(\vect v)\\
    \end{pmatrix}\text{ in } \Of \quad \text{ and }\quad
    \kevbl=\begin{pmatrix}
    \e \eab(\vect v)&{\frac{1}{2}\left( \e^{-1}\pa \vt + \pt \va \right)} \\
    \frac{1}{2}\left( \e^{-1}\pa \vt + \pt \va \right)&{\e^{-2}}\ett(\vect v)\\
    \end{pmatrix}\text{ in } \Ob.
    \label{eqn:def_resc_str}
\end{equation}
We will show that the convergence of displacements descends from the compactness properties of rescaled strains.
\newcommand{\Feold}{I_\e}

Finally, we compute the total mechanical macroscopic energy of the entire structure by extracting the minimum, at small scale, among nematic orientations.
In this framework, the orientation of molecules is effectively imposed by the displacement, leaving us with a macroscopic energy functional depending only on the mechanical strain.
Analogous models of nematic elastomers based on the same minimisation procedure have been widely adopted in the literature for describing the engineering stress-strain response of NLCEs~\cite{cesana2011quasiconvex} and microstructure formation both in linearised \cite{cesana2010relaxation} and finite elasticity~\cite{desimone2002macroscopic} as well as membrane elasticity \cite{cesana2015effective,conti2002soft}.

We adopt a compact notation to denote the total energy functional in the uniaxial and biaxial order tensor theories. For $X=U$ or $B$, respectively, the energy reads
\renewcommand{\Fe}{E_\e}
\newcommand{\FXe}{E_{X,\e}}
\newcommand{\FBe}{E_{B,\e}}
\newcommand{\FUe}{E_{U,\e}}

\begin{equation}
\label{eqn:workingenergy}
E_{X,\e}(\vect u):=
 \infqx 
\widetilde E_{X,\e}(\vect u, Q)
\end{equation}
where
\begin{equation}
\label{eqn:workingenergy}
\widetilde E_{X,\e}(\vect u,Q):=
\begin{cases}
 \displaystyle\frac{1}{2} \int_\Of \left\{ \tn{\ke{\vect u}}^2 + \frac{\laf}{2\muf} \tr^2(\ke{\vect u})   \right\}dx+\\
 \displaystyle\frac{1}{2}
  \int_\Ob \left\{ \tn{\kebl{\vect u}-Q}^2 + \frac{\laf}{2\muf} \tr^2(\kebl{\vect u})  +   \delta_\e^2 \left( \e^2|\nabla' Q|^2+\tn{\pt Q}^2  \right)\right\} dx 
  \text{ if } (\vect u,Q)\in \mathcal V 
     \times {H^1}(\Ob;\Qx) \\
     \\
    +\infty  \qquad\qquad\qquad\qquad\qquad\qquad\qquad\qquad\qquad \text{otherwise in } L^2(\O,\R^3)\times {L^2}(\Ob,\R^{3\times 3}).
\end{cases}
\end{equation}

\section{The membrane regime} 
\label{sec:membrane}

The existence of minima for {the two models} $\widetilde E_{X,\e}(\vect u,Q)$ {with $X=B$ or $U$, respectively} at fixed $\e$, can be established by the direct method in the calculus of variations, considering admissible minimising sequences $(\vect u^k)\subset \mathcal V$ and $(Q^k)\subset H^1(\Ob,\Qx)$. The energy functionals are coercive in $\vect u$ (by Korn inequality, see~\cite{ciarlet2013linear}) and uniformly control $\nltwo[\nabla Q]{L^2(\Ob,\R^{3\times 3})}^2$. Further, {for $X=U$ and $B$,} the sets $L^2(\Ob,\Qx)$ are bounded and closed sets in the strong $L^2$ topology and therefore one can extract subsequences $Q^k $ which are weakly converging in $H^1$ and strongly converging in $L^2$ to a $Q\in H^1(\Ob,\Qx)$ {as $k\to \infty$}.
Finally, {$\widetilde{E}_{X,\varepsilon}$ } is lower semicontinuous with respect to the {weak $H^1(\O,\R^3)\times H^1(\Ob,\R^{3\times 3})$ topology.} 

In what follows we develop the the asymptotic analysis of the problems
\[
    \left\{ \inf {\FXe}(\vect u) \right\}_{\e>0}, \text{ as } \e\to 0, \text{ for } X=U, B.
\]

\subsection{Statement of the Problem and Main Results}

We attack the asymptotic {study} of the biaxial and uniaxial problems.
The first, a convex model, shows the main difficulties related to dimension reduction. A multilayer setting with similar scaling regimes yet for homogeneous non-active materials has been already discussed, in the case of elasticity and brittle elasticity, in~\cite{leon-baldelli2015on-the-asymptotic} and~\cite{babadjian2015reduced}. 
The proof of this asymptotic result is simpler than in the uniaxial case because, by convexity of $\Qbb$, no explicit relaxation is required for the variable $Q$.
The first result we obtain is the following:
\begin{result}[biaxial model, cf. Theorem~\ref{thm:biaxial}]
${\FBe}$ $\G$-converges to $E_0$ as $\e\to 0$ with respect to the $L^2(\O_f)$-strong topology, where $E_0$ reads
\begin{eqnarray}
E_0(\vect u):=
\begin{cases}
\displaystyle
\frac{1}{2}\int_\o \left\{ 
\frac{\laf}{\laf+2\muf}|\eaa(\xoverline{\vect u})|^2 + 2 |\eab( \xoverline{\vect u})|^2 
+\operatorname{dist}^2_{\mu,\lambda} \left( A(\overline{ \vect  u}),\Qbb \right)\right\} dx' 
&  \text{if } \vect u =(\xoverline{\vect u},0), \xoverline{\vect u} \in H^1(\omega,\R^2),  \\
+\infty & \text{otherwise in }L^2(\O,\R^{3}).
\end{cases} 
\end{eqnarray}
Here
\begin{eqnarray}
  A(\overline{ \vect  u})=\frac{1}{2}\left(
\begin{array}{ccc}
0 & 0 & \overline{u}_1  \\
0 & 0  & \overline{u}_2 \\
\overline{u}_1 & \overline{u}_2 & 0
\end{array} \right)
\end{eqnarray}
and
\begin{eqnarray}
dist_{\mu,\lambda}(A,\mathcal{Q}_B)=\inf_{Q\in\mathcal{Q}_B}\|A-Q\|_{\mu,\lambda},
\end{eqnarray}
where
\begin{eqnarray}\label{1711221146}
\|A\|_{\mu,\lambda}^2=|A_{\alpha\beta}|^2+2|A_{\alpha 3}|^2+\frac{\lambda}{\lambda+2\mu}|A_{33}|^2
\end{eqnarray}
for any $A\in \R^{3\times 3}$, $A=A^T$.

\end{result}

The limit model is a two-dimensional membrane undergoing purely in-plane deformations,
with an additional term accounting for the effective interaction with the soft nematic bonding layer. It is represented by the weighted squared distance of $A(\xoverline {\vect u})$, a symmetric tensor which measures the effective shear deformation of the nematic layer with respect to the set of $Q$-tensors. The weights are material-dependent parameters which are computed explicitly. Accordingly, this term is positive only when $A(\xoverline {\vect u})$ rests outside of $\Qbb$. In other terms, the nematic film behaves in the limit as an effective `active' elastic foundation, operating only when there is no admissible nematic order that accommodates the elastic deformation driven by the film.
Equation~\eqref{1711221146} defines a metric on symmetric matrices in view of the constitutive assumptions~\eqref{1711221148}. Notice that, thanks to the properties of the distance and by convexiy of $\mathcal{Q}_B$, there follows
\begin{equation}
\label{eqn:distance}
\int_{\omega}dist^2_{\mu,\lambda}(A(\overline{\vect u}),\mathcal{Q}_B) dx'=\inf_{\overline{Q}\in L^2(\omega,\mathcal{Q}_B)}
\int_{\omega}\left(\frac{\lab}{2\mub+\lab}\left( \xoverline Q_{33} \right)^2+2\tn{\frac{1}{2}\overline u_{\alpha}-\xoverline Q_{\alpha 3}}^2  + |\xoverline Q_{\alpha \beta}|^2\right)dx'.
\end{equation}
The fact above will be widely used in what follows in order to characterize the $\Gamma$-limits.

The proof of the convergence of the biaxial model is essentially a dimension reduction of a multilayer membrane, in the spirit of  \cite{babadjian2015reduced,leon-baldelli2015on-the-asymptotic}, with an unknown pre-strain in the nematic layer.
We prove this result, classically, by establishing lower- and a upper-bound inequalities; the first, entails key considerations of dimension reduction, the latter is an easy adaptation of~\cite{babadjian2015reduced}.
Once this question is settled, we tackle the more general uniaxial model.
The statement of our most comprehensive result is the following:
\begin{result}[uniaxial model, cf. Theorem~\ref{thm:uniaxial}]
${\FUe}$ $\G$-converges to $ E_0$ as $\e\to 0$  with respect to the $L^2(\O_f)$-strong topology, where $E_0$ reads as in Result~1.
\end{result}
In order to prove this result, we develop upon the arguments put forward for the biaxial model, the compactness and liminf lemmas extend straightforwardly.
The richness of the model at hand, however, is fully grasped in the construction of the recovery sequence for the limsup inequality. We analyse it thoroughly,  exhibiting the coupling between optic reorientation and elastic deformation.
Indeed, different quantities, associated with nematic and elastic phenomena, evolving at different scales along different directions, and converging in different topologies, are nonetheless coupled.
In order to fully relax the proposed problem, the construction requires calibrating infinitely fine microstructures at different spatial frequencies (in-plane vs. transverse), suitable smoothing of interfaces using very small `comfort zones', and constructing a microstructure rendering the elastic rigidity (that is, Hadamard jump conditions, or rank-1 connection of deformations) compatible with the symmetries of the isotropic nematic manifold.

The $\G$-limit is interpreted as the energy of a 2D elastic membrane on top of an in-plane `active' nematic foundation with a nematic microstructure which becomes effectively planar in the limit. The emergent microstructure is genuinely three-dimensional and allows full relaxation of the nematic manifold. Consequently, the nematic layer is able to accommodate with zero energy nontrivial elastic displacements belonging to the closed and convex set $\Qb\equiv\Qbb$.

\section{Proof of the Theorems} 
\label{sec:proof_of_the_theorems}

\subsection{Minimising sequences: compactness} 
\label{sub:compactness}

The choice of the topology is crucial in the analysis of the asymptotic limiting behaviour as different topologies may lead to very different asymptotic results which in turn may allow to embrace different physical phenomena. 
It ultimately determines compactness of sequences (including the recovery sequence). 
In this sense, the choice of the topology is a choice of mechanical modelling, likewise the choice of the functional setting and that of the energy functional.

We first show the proof of a Poincaré-type inequality.
\newcommand{\interval}{W}
\begin{lemma}[Poincaré-type inequality]
For any function $\vect u\in L^2(\O,\R^3)$ with $\pt \vect u\in L^2(W,\R^3)$,
with $\O=\o\times \interval$, $\o \subset \R^2$ open, bounded with Lipschitz boundary, $\interval=(a, b)\subset \R$ such that $u(\cdot, a)=0$ a.e. $x'\in \o$ we have $
    \nltwo[\vect u]{L^2(\O,\R^3)}\leq |\interval|\nltwo[\pt \vect u]{L^2(\O,\R^3)}.
$
\label{lem:poincare}
\end{lemma}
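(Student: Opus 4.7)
The plan is to reduce the inequality on the slab $\O = \o \times \interval$ to the classical one-dimensional Poincaré inequality along each transverse fiber $\{x'\} \times \interval$, and then integrate in the in-plane variable $x' \in \o$. The hypothesis $\pt \vect u \in L^2(\interval, \R^3)$, combined with the pointwise trace condition $\vect u(\cdot, a) = 0$ a.e.\ $x' \in \o$, guarantees that for a.e.\ $x' \in \o$ the fiber map $t \mapsto \vect u(x', t)$ admits an absolutely continuous representative on $\interval$ which vanishes at $t = a$. The fundamental theorem of calculus then yields, componentwise,
\begin{equation*}
\vect u(x', x_3) = \int_a^{x_3} \pt \vect u(x', t)\, dt \qquad \text{for a.e.\ } (x', x_3) \in \O.
\end{equation*}

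Applying the Cauchy-Schwarz inequality to the right-hand side and bounding $|x_3 - a| \le |\interval|$ produces the pointwise estimate $|\vect u(x', x_3)|^2 \le |\interval| \int_\interval |\pt \vect u(x', t)|^2\, dt$. Integrating first in $x_3 \in \interval$ contributes an additional factor of $|\interval|$, and a subsequent integration in $x' \in \o$ combined with Fubini's theorem yields $\nltwo[\vect u]{L^2(\O, \R^3)}^2 \le |\interval|^2\, \nltwo[\pt \vect u]{L^2(\O, \R^3)}^2$. Taking square roots concludes the argument. There is no substantive obstacle: the proof is the textbook one-dimensional Poincaré inequality with vanishing boundary datum, lifted to the product slab by Fubini. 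The only point meriting attention is the coherent interpretation of the trace $\vect u(\cdot, a) = 0$, which is made legitimate precisely by the assumption $\pt \vect u \in L^2$ on the fibers, granting the absolute continuity required for the fundamental theorem of calculus to apply a.e.
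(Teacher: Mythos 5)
Your proof is correct and follows essentially the same route as the paper: reduce to a one-dimensional estimate along vertical fibers via the fundamental theorem of calculus and the vanishing trace at $x_3 = a$, apply Cauchy--Schwarz, and integrate over the slab by Fubini. The only stylistic difference is that the paper first assumes $C^1$ regularity on fibers and closes the argument by density, whereas you justify the fundamental theorem of calculus directly through the absolute continuity of the a.e.\ fiber representative granted by $\pt \vect u \in L^2$; both are standard and equivalent ways of handling the same technical point.
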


\begin{proof}
In what follows we additionally assume $\vect u(x',\cdot)$ is $C^1(W)$ for a.e. $x'\in\omega$. To obtain the desired result we operate by density.
Consider the inequality
\begin{multline}
|\vect u(x', x_3)| = |\vect u(x', x_3) - \vect u(x', 0)| = \left| \int_0^{x_3} \pt \vect u(x', s)ds\right| \leq \int_\interval |\pt \vect u(x', s)|ds \\
\leq \nltwo[\pt \vect u]{L^1(\interval,\R^3)}\leq |\interval|^{1/2} \nltwo[\pt \vect u]{L^2(\interval,\R^3)},
\end{multline}
where we have used the boundary condition in the first equality and Hölder's inequality in the last step, and $|W|=|b-a|$.
Then, computing
\begin{multline}
\nltwo[\vect u]{L^2(\O,\R^3)}=\left( \int_{\O} |\vect u|^2 dx\right)^{1/2} \leq \left( \int_{\O} |\interval| \nltwo[\pt \vect u]{L^2(\interval,\R^3)}^2 dx\right)^{1/2}=
\left(  |\interval|^2 \int_{\o}\nltwo[\pt \vect u]{L^2(\interval,\R^3)}^2 dx'\right)^{1/2}\\=   
 |\interval|\left( \int_{\o}\int_{\interval}|\pt \vect u|^2ds dx' \right)^{1/2} = |\interval| \nltwo[\pt \vect u]{L^2(\O,\R^3)}
\end{multline}
concludes the proof.
\end{proof}

We consider admissible minimising sequences $(\vect u^\e) \subset L^2(\O,\R^3)$ that leave the energy uniformly finite. 
The energy bound imparts, necessarily, growth properties to the scaled gradient of displacement. Then, using the Poincaré-type argument we infer their uniform boundedness in $L^2(\O,\R^3)$, so that there exists a compact set of $L^2(\O,\R^3)$ such that minimising sequences are compact therein, for all $\e$.

\renewcommand{\uk}{\vect u^\e}

\paragraph{Compactness.} 
\label{par:compactness_}

The compactness properties of sequences of equibounded energy lead to establishing their limit space and allow to retrieve necessary kinematic restrictions that apply in the limit.

Let us consider sequences $(\uk)\subset L^2(\O, \R^3)$ such that $\uk(\cdot, -1)=0 \aee\, \forall \e$ and $\uk \to \vect u$
strongly in $L^2(\Of,\R^3)$ as $\e\to 0$ (we consider here a countable index).
\newcommand{\FeX}{{\Fe}_{,X}}
We can assume that $\liminf_{\e\to0} \FXe(\uk)< \infty$, up to a subsequence.
Indeed, if $\liminf_{\e\to0} \FXe(\uk)=+\infty$, there is nothing to prove. Suppose that
\begin{equation}
    \FXe(\uk)\leq C,
    \label{eqn:energybound} 
\end{equation}
 for some $C>0$  independent of $\e$.
In particular, equiboundedness of the rescaled energy for minimising sequences implies
\[
    \nltwo[\ke{\uk}]{L^2(\Of)}\leq C\text{ and }\nltwo[\kebl{\uk}]{L^2(\Ob)}\leq C,
\]
so that there exist $\limkf\in L^2(\Of,\R^{3\times 3})$ and $ \limkbl\in L^2(\Ob,\R^{3\times 3})$ such that $\ke{\uk} \wto \limkf \text{ weakly in } L^2(\Of, \R^{3\times 3})$, $\kebl{\uk} \wto \limkbl \text{ weakly in } L^2(\Ob, \R^{3\times 3})$ as $\e\to 0$ (possibly up to extraction of a subsequence).
An application of Poincaré's inequality (Lemma~\ref{lem:poincare}) yields
\[
    \nltwo[\uk_3]{L^2(\O)}^2\leq\int_{\O} |\ett(\uk)|^2 dx \leq \int_\Of |\kttue|^2 dx+\int_\Ob |\kttblue|^2 dx\leq C{\e^2}\to 0
\]
showing that the component of the weak limit $u_3$ vanishes, that is, limit displacements are planar.
Recalling the definition of rescaled strains in the film and the energy bound \eqref{eqn:energybound} we get
\[
    \norm[\ett(\uk)]{L^2(\Of)}\leq C\e^2, \qquad
    \norm[\eat(\uk)]{L^2(\Of)}\leq C\e, \qquad
    \text{and }\norm[\eab(\uk)]{L^2(\Of)}\leq C
\]
which, using the expression of the energy and Korn's inequality (with boundary conditions) yields
\[
  \frac{1}{C}  \nh[\vue]{H^1(\Of,\R^3)} \leq \nltwo[e(\uk)]{L^2(\Of,\R^{3\times 3})}\leq \nltwo[\ke{\uk}]{L^2(\Of,\R^{3\times 3})}\leq C.
\]
Displacements are actually uniformly bounded in $H^1(\Of,\R^3)$, hence, up to a subsequence not relabelled, there exists $\vect u\in H^1(\Of,\R^3)$ such that $\uk \wto \vect u$ therein, as $\e\to 0$.
The energy bound yields an additional kinematic characterisation of the displacement field in the film. Indeed, \eqref{eqn:energybound} and \eqref{eqn:def_resc_str} imply $\eit(\uk)\to 0$ strongly in $L^2(\Of,\R^3)$. 
As a consequence, $\pt \ua = -\pa u_3 $ and since $\ut^\e\to 0$ strongly in $L^2(\Of, \R)$  $\pt \ua = -\pa u_3 \equiv 0$ pointwise a.e. in $\o$.
From this follows that $\ua = \ua(x')\in H^1(\Of, \R^2)$, that is: limit displacements are independent of $x_3$. They can be further characterised as an $H^1$ function defined on any section of the film. This identifies the limit space
\begin{equation}
    \label{eqn:limitspace}
    \mathcal{V}_0:=\left\{\vect v\in H^1(\o,\R^3): \vect v=(\overline {\vect v}, 0), \overline{\vect v}\in H^1(\o, \R^2) \right\}.
\end{equation}
Considering the quantities defined in the nematic bonding layer, we have
\begin{equation}
    \label{eqn:compactnessbl}
        \norm[\ett(\uk)]{L^2(\Ob)}\leq C\e\to 0, \qquad
    \norm[({\e^{-1}\pa \uk_3+\pt \uk_\alpha})]{L^2(\Ob)}\leq C, \qquad \text{ and }
    \norm[\e\eab(\uk)]{L^2(\Ob)}\leq C.
\end{equation}

Note that the only term of order zero in $\e$ is given by shear terms. Although scaled strains are bounded,
the energy does not fully control the symmetrised gradient of displacement due to coefficient $\e$ factoring the in-plane terms. 
However, compactness will prove sufficient 
to fully characterise the limit.

Because the sequence $(\uk)$ is bounded in $L^2(\Ob,\R^3)$ by Poincaré and the energy bound, and because of the first estimate above, we get $u^\e_3\to 0$ strongly in $L^2(\Ob)$ as well.
We infer that there exists $\vect u^*\in L^2(\Ob,\R^3)$ with ${ u}^{*}_3=0$  such that $\uk\wto \vect u^*\in L^2(\Ob,\R^3)$.
We are left to identify the planar components of limit displacements and their gradient.
Possibly passing to a further subsequence (not relabelled) 
 there holds
\[
\int_{\Ob}\vue \vect \phi dx\to \int_{\Ob} \vect u^*\vect \phi dx,\quad \text{ as } \e\to 0,\qquad \forall\vect \phi\in C^{\infty}_c(\Ob,\R^3).
\]
Also, since  $\partial_j \vect \phi\in L^2(\Ob, \R^{3\times 3})$
we have
\begin{eqnarray}\label{1611231640}
\int_{\Ob}{\vect u}^{\e}\partial_j\vect \phi dx\to \int_{\Ob} \vect u^*\partial_j\vect \phi dx,\quad 
\text{ as } \e\to 0,\qquad \forall \vect \phi\in C^{\infty}_c(\Ob,\R^3),
\end{eqnarray}
The relation (\ref{1611231640}) and the definition of distributional derivative imply that
\begin{eqnarray}\label{1611231659}
\int_{\Ob}
\partial_j \vue\vect \phi dx\to 
\int_{\Ob} 
\partial_j
\vect u^*\vect \phi dx,\quad \text{ as } \e\to 0,\qquad \forall \vect\phi\in C^{\infty}_c(\Ob,\R^3),
\end{eqnarray}
therefore $\nabla \vue\to\nabla \vect  u^*$ (in the sense of distributions) in $\mathcal{D}'(\Ob,\R^{3\times 3})$ (the dual of $C^{\infty}_c(\Ob,\R^3)$)
with $\nabla \vue\in L^2(\Ob,\R^3)$ and $\nabla \vect u^*\in \mathcal{D}'(\Ob,\R^{3\times 3})$.
The energy estimate for this particular subsequence  reads
\[
\e^2\int_{\Ob}|\nabla' \vue|^2 dx \leq \int_{\Ob}\e^2|\eab( \vect  u_k)|^2 dx\leq C 
\]
and yields, possibly up to a further sequence not relabelled, $\e\nabla' \vue\wto \vartheta$
for some $\vartheta\in L^2(\Ob,\R^{3\times2})$. In other words, we have
\begin{eqnarray}\label{1611231700}
\int_{\Ob}\e \partial_\alpha \vue \vect \phi dx\to
\int_{\Ob}\vartheta\vect \phi dx,\qquad \text{ as }\e\to 0,\qquad \forall \vect \phi\in L^{2}(\Ob,\R^3).
\end{eqnarray}
At this stage $\vartheta$ may depend on the particular subsequence.
Now recall formula (\ref{1611231659}) and that
relation \eqref{1611231700} holds, in particular, in the case $ \vect \phi\in C^{\infty}_c(\Ob,\R^3)$.
Multiply both sides of \eqref{1611231659}  by $\e$ to get
\begin{eqnarray}
\e\int_{\Ob}
\partial_\alpha \vue\vect \phi dx\to 
0,\quad \text{ as } \e\to 0,\qquad \forall \vect\phi\in C^{\infty}_c(\Ob,\R^3)
\end{eqnarray}
which characterises $\vartheta$ as a distribution, from \eqref{1611231700}, \ie $\vartheta\equiv 0$ for any subsequence.
Furthermore, by the fundamental theorem of calculus, the $0$ distribution is equivalent to the $0$ function (in the classical sense).\footnote{Differently from the strictly convex case of purely elastic multilayers (see \eg \cite{babadjian2015reduced}) where  $\vartheta$ is obtained upon minimisation,
here information upon limit strains is obtained by compactness, without further minimisation allowed.}
The identification of the limit $\vect u^*$ (which may well depend on its sub-sequence) in the whole $\Ob$ is not necessary because, as we will see, the limit energy only depends on the upper trace of limit displacements on $\o$ which, due to continuity, is identified with (the trace of) displacements in the film 
that determines the limit space $\mathcal{V}_0$.

\subsection{The biaxial model} 
\label{sub:the_biaxial_model}

\begin{theorem}
\label{thm:biaxial}
Let $\Ob=\o\times (-1, 0)$, $\Of=\o\times (0, 1)$, $\O=\o\times (-1, 1)$, $\o\subset \R^2$ open, bounded and with Lipschitz boundary, $\lambda, \mu >0$ and 
 ${\FBe}$ as defined in \eqref{eqn:workingenergy}, that is
\[
	{\FBe}(\vect u)=
	\begin{cases}
		 \displaystyle\frac{1}{2}\int_\Of \left(\tn{\ke{\vect u}}^2 + \frac{\laf}{2\muf} \tr^2(\ke{\vect u})   \right) dx
	 \\
	 +\inf_{Q\in L^2(\Ob,\Qbb)} \displaystyle\frac{1}{2}\int_\Ob \left(\tn{\kebl{\vect u}-Q}^2 + \frac{\lab}{2\muf} \tr^2(\kebl{\vect u}) + \delta_\e^2 \left(\e^2 |\nabla' Q|^2+\tn{\pt Q}^2  \right)\right) dx &\text{if } \vect u\in \mathcal V,\\
	+\infty \quad\quad\quad\quad\quad\quad\quad\quad\quad\quad\quad\quad\quad\quad\quad\quad\quad\quad\quad\quad\quad\quad\quad\quad\quad\quad\text{otherwise in }L^2(\O,\R^3).
	\end{cases}
\]
Then $\Gamma\hbox{-}    \lim_{\e\to 0} {\FBe}(\vue)=E_0(\vect u)$ with respect to the strong $L^2(\Of,\R^3)$ topology as $\e\to0$,
where 
\begin{eqnarray}\label{form0102}
E_0(\vect u)=
\begin{cases}
\displaystyle
\frac{1}{2}\int_\o \left\{ 
\frac{\laf}{\laf+2\muf}|\eaa(\xoverline{\vect u})|^2 + |\eab( \xoverline{\vect u})|^2 
+\operatorname{dist}^2_{\mu,\lambda} \left( A(\overline{ \vect  u}),\Qbb \right)\right\} dx' 
&  \text{if } \vect u  \in \mathcal V_0,  \\
+\infty & \text{otherwise in }L^2(\O,\R^{3}).
\end{cases} 
\end{eqnarray}
The spaces $\mathcal V$ and $\mathcal V_0$ are defined in \eqref{eqn:admissiblespace} and \eqref{eqn:limitspace}, respectively.

\end{theorem}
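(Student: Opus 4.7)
My plan is to establish the two halves of the $\Gamma$-convergence separately, namely the $\Gamma$-liminf inequality and the construction of a recovery sequence, leveraging the compactness analysis of Section~\ref{sub:compactness} that identifies $\mathcal V_0$ as the limit space and produces weak $L^2$-limits of the rescaled strains $\ke{\vue}$ and $\kebl{\vue}$. Convexity of $\Qbb$ is the structural feature that spares any explicit relaxation of the $Q$-variable and reduces the argument to a multilayer dimension-reduction in the spirit of~\cite{babadjian2015reduced,leon-baldelli2015on-the-asymptotic}, enriched with an unknown pre-strain in the lower layer.

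\textbf{Liminf inequality.} Given $\vue\to\vect u$ strongly in $L^2(\Of,\R^3)$ with $\liminf\FBe(\vue)<\infty$, I first extract a subsequence realising the liminf and, simultaneously, a minimiser $Q^\e\in H^1(\Ob,\Qbb)$ of the inner problem with $Q^\e\wto Q^*$ in $L^2(\Ob,\R^{3\times 3})$; the limit lies in $L^2(\Ob,\Qbb)$ by weak closedness of the convex target. Section~\ref{sub:compactness} provides $\vect u=(\overline{\vect u},0)$ with $\overline{\vect u}\in H^1(\omega,\R^2)$ and weak $L^2$-limits $k^f$, $k^b$ of $\ke{\vue}$, $\kebl{\vue}$ satisfying $k^f_{\alpha\beta}=\eab(\overline{\vect u})$ and $k^b_{\alpha\beta}=0$. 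The shear component of $k^b$ is identified only on average: integrating the identity $\kebl_{\alpha 3}(\vue)=\tfrac12(\e^{-1}\pa\vue_3+\pt\vue_\alpha)$ across $x_3\in(-1,0)$, using the Dirichlet condition at $x_3=-1$, the strong $L^2(\omega)$ convergence of the film trace $\vue_\alpha(\cdot,0)\to\overline u_\alpha$, and the bound $\|\vue_3\|_{L^2(\Ob)}=O(\e^2)$ (which kills the $\e^{-1}\pa\vue_3$ contribution in the distributional sense), I obtain $\int_{-1}^0\kebl_{\alpha 3}(\vue)\,dx_3\wto\tfrac12\overline u_\alpha$ in $L^2(\omega)$. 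Dropping the nonnegative curvature term and invoking weak-$L^2$ lower semicontinuity of the convex quadratic integrand, then performing a pointwise minimisation in $\Of$ over the subsequence-dependent, unconstrained components $k^f_{\alpha 3},k^f_{33}$ (the standard computation producing the $\frac{\lambda}{\lambda+2\mu}|\eaa|^2$ coefficient), and using the variational reformulation~\eqref{eqn:distance} in $\Ob$ (which combines the analogous minimisation over $k^b_{33}$ with the convexity of $\Qbb$), yields $\liminf\FBe(\vue)\geq E_0(\vect u)$.

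\textbf{Recovery sequence.} For $\vect u=(\overline{\vect u},0)\in\mathcal V_0$ with $\overline{\vect u}$ smooth (the general case following by density of $C^\infty(\overline\omega,\R^2)$ in $H^1(\omega,\R^2)$ and a diagonal extraction), I would construct $\vue$ separately in the two subdomains, matched continuously at $x_3=0$ and vanishing on the Dirichlet face $x_3=-1$. In $\Of$ a Kirchhoff--Love-type ansatz $\vue_\alpha(x',x_3)=\overline u_\alpha(x')$, $\vue_3(x',x_3)=-\e^2 x_3\,\tfrac{\lambda}{\lambda+2\mu}\eaa(\overline{\vect u})(x')$ kills the rescaled shears and saturates the reduced in-plane density. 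In $\Ob$ the linear-in-thickness interpolation $\vue_\alpha(x',x_3)=(1+x_3)\overline u_\alpha(x')$, with $\vue_3\equiv 0$ (modified by an $O(\e^2)$ boundary-layer corrector near $x_3=0$ to match the film), realises $\kebl_{\alpha 3}(\vue)=\tfrac12\overline u_\alpha$, $\kebl_{\alpha\beta}(\vue)=O(\e)$ and $\kebl_{33}(\vue)=0$. For the optic variable I set $Q^\e(x',x_3)=\overline Q^*(x')$ where $\overline Q^*\in L^2(\omega,\Qbb)$ attains the infimum in~\eqref{eqn:distance}, then mollify it into $H^1(\Ob,\Qbb)$ (permissible because $\Qbb$ is convex). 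Since $\pt Q^\e\equiv 0$ and $\delta_\e\to 0$, the curvature contribution is $o(1)$, and a direct computation gives $\limsup\FBe(\vue)\leq E_0(\vect u)$.

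\textbf{Main obstacle.} The only genuinely delicate step is the identification of the shear limit $k^b_{\alpha 3}$: compactness only pins down its thickness average in terms of $\overline u_\alpha$, which is precisely why the limit density must be expressed through the variational distance~\eqref{eqn:distance} rather than via a pointwise minimisation on $Q$, and why the recovery sequence may use an $x_3$-independent optic tensor in the bond layer. Everything else is a streamlined adaptation of established multilayer dimension-reduction arguments, rendered essentially routine by the convexity of $\Qbb$; the real novelty, exploiting microstructure to access the convex envelope, is deferred to the uniaxial case of the subsequent theorem.
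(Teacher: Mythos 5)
Your $\liminf$ argument is sound and follows the paper's route: weak limits of rescaled strains, joint convexity and weak lower semicontinuity of the distance-type integrand, explicit pointwise minimisation over the unconstrained $k_{i3}$ components in $\Of$ and $k_{33}$ in $\Ob$, and identification of $\overline k_{3\alpha}=\tfrac12\overline u_\alpha$ via the thickness average, the Dirichlet condition at $x_3=-1$ and trace convergence. Your treatment of the $\e^{-1}\pa\vue_3$ term (distributional vanishing from $\|\vue_3\|_{L^2(\Ob)}=O(\e^2)$, combined with the uniform $L^2(\o)$ bound on the averaged shear from the energy) is in fact a cleaner articulation than the paper's terse Eq.~\eqref{eqn:ktrealpha}. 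Extracting a minimiser $Q^\e$ and its weak limit $Q^*$ is harmless, though the paper works directly with the infimum.

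The recovery sequence, however, has a genuine gap. By setting $\vue_3\equiv 0$ in $\Ob$ (apart from a vanishing boundary-layer corrector that does not act in the bulk) you force the rescaled transverse strain $\e^{-2}\ett(\vue)$ to be zero there. But the lower bound identifies the optimal transverse rescaled strain as $\tfrac{2\mu}{\lambda+2\mu}\overline Q_{33}$, not zero: only with this value does the combination
\[
\Bigl(\kappa_{33}-\overline Q_{33}\Bigr)^2+\frac{\lambda}{2\mu}\,\kappa_{33}^2
\;\longrightarrow\;
\frac{\lambda^2}{(\lambda+2\mu)^2}\,\overline Q_{33}^{\,2}+\frac{2\lambda\mu}{(\lambda+2\mu)^2}\,\overline Q_{33}^{\,2}
=\frac{\lambda}{\lambda+2\mu}\,\overline Q_{33}^{\,2}
\]
reproduce the reduced weight of the metric $\|\cdot\|_{\mu,\lambda}$ in~\eqref{1711221146}. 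Your construction instead yields $|0-\overline Q_{33}|^2+0=\overline Q_{33}^{\,2}$, i.e.\ full weight $1>\frac{\lambda}{\lambda+2\mu}$ on the $\overline Q_{33}$ contribution. Whenever the minimiser $\overline Q^*$ of $\operatorname{dist}^2_{\mu,\lambda}(A(\overline{\vect u}),\Qbb)$ has $\overline Q^*_{33}\neq 0$ on a set of positive measure — and this is the generic situation once $A(\overline{\vect u})\notin\Qbb$, since the eigenvalue-constrained projection of a pure shear $\overline u_\alpha\odot e_3$ with $|\overline u|/4>1/3$ picks up a non-zero $33$-component — your sequence produces a $\limsup$ strictly larger than $E_0(\vect u)$, so the upper bound fails. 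The missing ingredient is precisely the paper's bulk corrector $\vue_3(x)=\e^2(x_3+1)h^\e(x')$ in $\Ob$ with $h^\e\to\tfrac{2\mu}{\lambda+2\mu}\overline Q_{33}$, matched in $\Of$ by an $\e^2 h^\e(x')$ offset to preserve continuity at $\o\times\{0\}$; this is a through-thickness term, not a boundary layer, and cannot be subsumed in an $O(\e^2)$ corrector localised near the interface.
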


The lower semicontinuity inequality for the family of functionals requires to construct a lower bound $E_0$ for any converging sequence, based on ansatz-free, structural considerations on the energy, using optimality.
We prove the lower bound inequality in the following lemma.

\begin{lemma}[Lower bound]
\label{lem:lowerboundbi}
Let $\vect u \in \mathcal V_0$. For any sequence $(\vue)_{\e>0}\subset L^2(\O,\R^3):\vue \to \vect u$ strongly in $L^2(\Of, \R^3)$ we have
\newcommand{\liminflhs}{\liminf_{\e\to0}{\FBe}(\vue)}

\begin{equation}
\label{eqn:liminf}
{\liminf_{\e\to0}{\FBe}(\vue)}
    \geq\frac{1}{2}\int_\o \left\{ \frac{\laf}{\laf+2\muf}|\eaa(\xoverline{\vect u})|^2 + |\eab( \xoverline{\vect u})|^2 
+\operatorname{dist}^2_{\mu,\lambda} \left( A(\overline{ \vect  u}),\Qbb \right)\right\} dx'.
\end{equation}
\end{lemma}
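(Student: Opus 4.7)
If $\liminf_{\e\to 0}E_{B,\e}(\vect u^\e)=+\infty$ there is nothing to prove, so I would extract a (non-relabelled) subsequence along which the liminf is finite and the energies are uniformly bounded. Under this bound, the compactness analysis of Section~\ref{sub:compactness} yields $\vect u=(\xoverline{\vect u},0)\in\mathcal V_0$ and furnishes weak $L^2$ limits $\limkf$ of $\ke{\vect u^\e}$ in $\Of$ and $\limkbl$ of $\kebl{\vect u^\e}$ in $\Ob$. For each $\e$ I would also pick an almost-minimizer $Q^\e\in H^1(\Ob,\Qbb)$ of the inner infimum defining $E_{B,\e}(\vect u^\e)$. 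Since $\Qbb$ is pointwise bounded, $(Q^\e)$ is bounded in $L^\infty(\Ob,\R^{3\times 3})$; up to a further subsequence, $Q^\e\rightharpoonup Q^*$ weakly in $L^2(\Ob,\R^{3\times 3})$ with $Q^*\in L^2(\Ob,\Qbb)$ by convexity.

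\textbf{Identification of the limit strains.} Weak $H^1(\Of,\R^3)$ convergence of $\vect u^\e$ immediately gives $\limkf_{\alpha\beta}=\eab(\xoverline{\vect u})$, while the compactness argument shows $\e\nabla'\vect u^\e\to 0$ in distributions, hence $\limkbl_{\alpha\beta}=0$ a.e.\ in $\Ob$. The crucial identification concerns the shears $\limkbl_{\alpha 3}$: for any $\varphi\in C_c^\infty(\o)$,
\begin{equation*}
\int_{\Ob}\pt u_\alpha^\e\,\varphi(x')\,dx=\int_\o\varphi(x')\bigl[u_\alpha^\e(x',0)-u_\alpha^\e(x',-1)\bigr]\,dx'\longrightarrow\int_\o\varphi(x')\,\xoverline u_\alpha(x')\,dx',
\end{equation*}
using the clamping at $x_3=-1$ and the strong trace convergence $u_\alpha^\e(\cdot,0)\to\xoverline u_\alpha$ in $L^2(\o)$ (weak $H^1(\Of)$ bound plus compactness of the trace). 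The companion piece $\e^{-1}\int_{\Ob}\pa u_3^\e\,\varphi\,dx=-\e^{-1}\int_\o\pa\varphi\,\xoverline{u_3^\e}\,dx'$ tends to zero because Poincaré and the energy bound yield $\|u_3^\e\|_{L^2(\Ob)}\le C\e^2$. Combining, $\xoverline{\limkbl_{\alpha 3}}(x')=\tfrac12\xoverline u_\alpha(x')$, matching the entries of $A(\xoverline{\vect u})$.

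\textbf{Lower semicontinuity and reduction.} Both the film integrand (a non-negative convex quadratic in $\ke{\vect u^\e}$) and the bonding integrand (a non-negative convex quadratic jointly in $(\kebl{\vect u^\e},Q^\e)$) are weakly $L^2$-lower semicontinuous; dropping the non-negative curvature contribution,
\begin{equation*}
\liminf_\e E_{B,\e}(\vect u^\e)\ge\tfrac12\!\int_{\Of}\!\!\Bigl(|\limkf|^2+\tfrac{\lambda}{2\mu}(\tr\limkf)^2\Bigr)dx+\tfrac12\!\int_{\Ob}\!\!\Bigl(|\limkbl-Q^*|^2+\tfrac{\lambda}{2\mu}(\tr\limkbl)^2\Bigr)dx.
\end{equation*}
On the film side, pointwise minimization over the unconstrained entries $\limkf_{\alpha 3},\limkf_{33}$ at the optimum $\limkf_{\alpha 3}=0$, $\limkf_{33}=-\tfrac{\lambda}{\lambda+2\mu}\eaa(\xoverline{\vect u})$ produces the plane-stress density $\tfrac{\lambda}{\lambda+2\mu}|\eaa(\xoverline{\vect u})|^2+|\eab(\xoverline{\vect u})|^2$. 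On the bonding side, minimizing first in the free $\limkbl_{33}$ (optimum $\limkbl_{33}=\tfrac{2\mu}{\lambda+2\mu}Q^*_{33}$) collapses the diagonal part to $\tfrac{\lambda}{\lambda+2\mu}(Q^*_{33})^2$. Applying Jensen's inequality in $x_3\in(-1,0)$ to each residual convex term—using $\xoverline{\limkbl_{\alpha 3}}=\tfrac12\xoverline u_\alpha$ and the fact that $\xoverline{Q^*}(x')\in\Qbb$ by convexity—reduces the bonding contribution to $\tfrac12\int_\o\|A(\xoverline{\vect u})-\xoverline{Q^*}\|_{\mu,\lambda}^2\,dx'$. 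Finally, pointwise minimization over $\xoverline{Q^*}\in\Qbb$ (equivalently, invoking \eqref{eqn:distance}) yields $\tfrac12\int_\o\operatorname{dist}^2_{\mu,\lambda}(A(\xoverline{\vect u}),\Qbb)\,dx'$, and adding the film contribution gives \eqref{eqn:liminf}.

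\textbf{Main obstacle.} The delicate step is the identification of $\xoverline{\limkbl_{\alpha 3}}$. The rescaled shear $\kebl_{\alpha 3}$ superposes the singularly-scaled $\tfrac12\e^{-1}\pa u_3^\e$, whose weak $L^2$ limit is not directly controlled, with $\tfrac12\pt u_\alpha^\e$, which carries the interface trace. One must show, by thickness integration, that the singularly-scaled term drops out (via Poincaré on $u_3^\e$) while the regular one recovers exactly the upper trace $\xoverline u_\alpha$ from the film; the common-trace condition at $x_3=0$ and the clamping at $x_3=-1$ are both essential.
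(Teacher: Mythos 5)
Your proof is correct and follows the paper's overall architecture: compactness, weak lower semicontinuity of the convex (distance‑type) integrand, explicit pointwise minimisation over the free strain entries ($\limkatf,\limkttf$ in $\Of$ and $\limktt$ in $\Ob$ with the same optima), Jensen in $x_3$, and a final minimisation over $\xoverline Q\in\Qbb$ yielding $\operatorname{dist}^2_{\mu,\lambda}$. The one place where your route is genuinely different — and arguably tighter — is the identification $\overline{k}_{3\alpha}=\tfrac12\xoverline u_\alpha$. You split $2\,\kbbbl{\vue}_{\alpha3}=\e^{-1}\pa u_3^\e+\pt u_\alpha^\e$, test against $\varphi(x')\in C^\infty_c(\o)$, integrate by parts on the singularly scaled piece, and invoke the sharp estimate $\|u_3^\e\|_{L^2(\Ob)}\le C\e^2$ (correct: $\kttbl{\vue}=\e^{-2}\ett(\vue)$ is bounded in $L^2(\Ob)$, and Lemma~\ref{lem:poincare} transfers $\|\ett(\vue)\|_{L^2(\O)}\le C\e^2$ to $u_3^\e$), which kills $\e^{-1}\int_\o\pa\varphi\,\xoverline{u_3^\e}\,dx'$; the second piece reproduces the upper trace via the fundamental theorem of calculus and the clamping at $x_3=-1$. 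The paper instead derives only the boundedness $\|\e^{-1}\pa\xoverline{u_3^\e}\|_{L^2(\o)}\le C$ in \eqref{eqn:boundinplanederiv}, and the display \eqref{eqn:ktrealpha} as printed drops the factor $\e^{-1}$ in front of $\pa u_3^\e$ (cf.\ the definition \eqref{eqn:def_resc_str}); mere boundedness of $\e^{-1}\pa\xoverline{u_3^\e}$ would not by itself force that term to vanish in the limit. Your duality argument closes this step cleanly. Two small remarks: (i) you should note that $\vect u^\e\in H^1(\O,\R^3)$ guarantees the $\Ob$-side trace at $x_3=0$ coincides with the $\Of$-side one, so the strong convergence from the film trace applies; (ii) the curvature term in $E_{B,\e}$ is non‑negative and is simply discarded, as you do, so no quantitative control on $\nabla Q^\e$ is needed for the lower bound.
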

\begin{proof}
Let us plug any admissible sequence $(\uk)\subset L^2(\O,\R^3)$. We can assume that, up to subsequences, the energy is uniformly bounded.
Let us compute
\begin{equation}
\begin{multlined}
\label{eqn:liminffilm}
\liminf_{\e\to0}
    \frac{1}{2}\int_\Of \left(\tn{\ke{\vue}}^2 + \frac{\laf}{2\muf} \tr^2(\ke{\vue}) \right) dx
    \geq 
        \frac{1}{2}\int_\Of
    \left(  |\limkf|^2
      +\frac{\laf}{2\muf} \tr^2(\limkf) \right) dx\\
          \geq
      \min_{k_{i3}} \left\{ \frac{1}{2}\int_\Of \left\{  \left(  \limkttf^2
      + \tn{\eab(\xoverline{\vect u})}^2 + {2} \tn{\limkatf }^2\right) + \frac{\laf}{2\muf} (\eaa(\xoverline{\vect u})+\limkttf)^2 \right\}dx : k_{i3}\in L^2(\Ob, \R^3)\right\}.
\end{multlined}
\end{equation}

The first inequality follows from the lower semicontinuity of the $L^2$ norm with respect to weak-$L^2(\Of, \R^{3\times3})$ convergence of the left hand side whereas for the second inequality we perform an explicit optimisation with respect to the third column of the (rescaled) strain tensor, \ie a convex pointwise optimisation across the thickness.
In addition, we have used the weak convergence of displacements, up to subsequences, $\vue\wto \vect u$ weakly in $H^1(\Of,\R^3)$ as $\e\to 0$ with $\vect u=(\xoverline {\vect u}, 0), \xoverline{\vect u}\in H^1(\o,\R^2)$ established by compactness. 
Because, by definition, $\kabue = \eab(\vue)$ we can characterise the limit of in-plane (scaled) strains $\limkabf=\eab(\xoverline{\vect u})$.

The convex optimisation above yields
\[
    \limkttf^*=\frac{-\laf}{\laf+2\muf}\eaa(\xoverline{\vect u}),\quad \limkatf^*=0.
\]
After plugging $k^*_{i3}$ in the integral on the right hand side of \eqref{eqn:liminffilm}, we integrate across the thickness owing to the invariance of limit displacements with respect to $x_3$. This establishes the lower bound inequality for the film, namely
\begin{equation}
\label{eqn:lbfilm}
    \liminf_{\e\to0}
    \frac{1}{2}\int_\Of \left(\tn{\ke{\vue}}^2 + \frac{\laf}{2\muf} \tr^2(\ke{\vue}) \right) dx
    \geq 
    \frac{1}{2}\int_\o \left(  \frac{\laf}{\laf+2\muf}|\eaa(\xoverline{\vect u})|^2 + |\eab( \xoverline{\vect u})|^2 \right)dx'.
\end{equation}
Note that the necessary relaxation with respect to the transverse (rescaled) strain in the general nonlinear case of plates and shells, see~\cite{le-dret1993le-modele}, boils down in the present linearly elastic case to a convex minimisation which can be computed explicitly.

Let us now tackle energy lower bound for the nematic layer
\begin{multline}
\label{eqn:liminfnematic}
\liminf_{\e\to 0}
\infqu \left\{
\frac{1}{2}\int_\Ob 
\left( \left( \kttblue-\Qtt  \right) ^2+2|\ktablue-\Qat|^2  + |\kabblue-\Qab|^2 \right) dx \right. \\
+
\left.
\frac{1}{2}\int_{\Ob}\left\{  \delta_\e^2\e^2|\nabla' Q|^2 +\delta_\e^2|\pt Q|^2  \right\} dx+ 
\frac{1}{2}\int_\Ob \frac{\lab}{2\mub} \left( \kttblue + \kaablue \right)^2 dx
\right\}\\
\geq  \liminf_{\e\to 0} 
\inf_{Q\in L^2(\O,\Qb)}
\frac{1}{2}\int_\Ob 
g^\e(\vue, Q)
dx,
\end{multline}
where $g^\e(\vect u, Q):=  \left( \kttbl{{\vect u}}-\Qtt  \right) ^2+2|\ktabl{ {\vect u}}-\Qat|^2  + |\kabbl{ {\vect u}}-\Qab|^2  +\frac{\lab}{2\mub} \left( \kttbl{ {\vect u}} + \kaabl{ {\vect u}} \right)^2$.

Now, notice the  functional above can be written in the form
\newcommand{\dummymatrix}{\kappa}
\[
    L^2(\O, \R^{3\times 3})\ni\kappa\mapsto\inf_{L^2(\O,\Qbb)}\int_{\O }\left\{ |\dummymatrix-Q|^2+ \frac{\lambda}{2\mu}(\tr\dummymatrix)^2 \right\}dx\equiv
\int_{\O }\left\{ 
\operatorname{dist}^2(\dummymatrix,\Qbb)+ \frac{\lambda}{2\mu}(\tr\dummymatrix)^2 \right\}dx.
\]
The last identity follows as in \eqref{eqn:distance} (see also~\cite{cesana2011nematic}). The functional has the structure of the euclidean distance of the matrix $\dummymatrix$ from a compact convex manifold, therefore it is a convex functional, hence lower semicontinuous {in the weak $L^2$ topology }with respect to the variable $\dummymatrix$.

Recalling the convergences established by compactness, that is $\kttblue\wto \limktt, \kabblue\wto 0, \ktablue\wto\limkta$ weakly in $L^2(\Ob)$ when $\e\to 0$, we can pass to the limit in $\e$ which yields
\begin{multline}
\liminf_{\e\to 0} 
\inf_{Q\in L^2(\O,\Qb)}
\frac{1}{2}\int_\o 
g^\e({\vue}, {Q})
dx\\
\geq 
 \inf_{Q\in L^2(\O,\Qb)}\frac{1}{2}\int_\Ob 
\left(  \left( \limktt-\Qtt \right)^2+2|\limkta-\Qat|^2  + |\Qab|^2 +\frac{\lab}{2\mub} (\limktt)^2  \right)  dx.
\end{multline}
Analogously to the film, we improve the lower bound upon pointwise minimisation with respect to $\limktt$ in $L^2(\Ob)$ which gives $\limktt^*=\frac{2\mub}{\lab+2\mub}\Qtt$. Substituting we obtain
\begin{multline}
\label{eqn:liminfnematictwo}
\liminf_{\e\to 0} 
\inf_{Q\in L^2(\O,\Qb)}
\frac{1}{2} \int_\Ob 
g^\e(\vue, Q)
dx\\\geq 
 \inf_{Q\in L^2(\Ob,\Qb)}
\frac{1}{2} \int_\Ob  \left\{ \left( \frac{\lab}{\lab+2\mub}\Qtt \right)^2+2|\limkta-\Qat|^2  + |\Qab|^2  + \frac{4\lab\mub^2}{(\lab+2\mub)^2}(\Qtt)^2  \right\} dx\\
= \inf_{Q\in L^2(\Ob,\Qb)}\frac{1}{2} \int_\Ob   \left( \frac{\lab}{\lab+2\mub}\left( \Qtt \right)^2+2|\limkta-\Qat|^2  + |\Qab|^2 \right)  dx\\
\geq
 \inf_{Q\in L^2(\Ob,\Qb)}\frac{1}{2} \int_\o \left(  \frac{\lab}{\lab+2\mub}\left( \Qtt \right)^2+2|\overline{k}_{3\alpha}-\Qat|^2  + |\Qab|^2 \right) dx'\\
=
 \inf_{\xoverline Q\in L^2(\o,\Qbb)}\frac{1}{2} \int_\o \left(  \frac{\lab}{\lab+2\mub}\left( \xoverline Q_{33} \right)^2+2|\overline{k}_{3\alpha}-\xoverline Q_{\alpha 3}|^2  + |\xoverline Q_{\alpha \beta}|^2 \right) dx'.
\end{multline}
The last inequality follows exploiting Cauchy-Schwartz in integrating  the convex integrand with respect to the variable $x_3$.
The last equality follows because the infimum computed 
in $L^2(\Omega_b,\mathcal{Q}_B)$ coincides with the infimum computed in 
 $L^2(\omega,\mathcal{Q}_B)$ because the function $\overline{k}_{3\alpha}$ is independent of $x_3$
 (consequently the matrix appearing in Eq. 
(\ref{eqn:liminfnematictwo})
has to be regarded as any biaxial tensor in 
 $\xoverline Q$ is a constant tensor with respect to $x_3$).
We can further characterise the limit $\overline k_{3\alpha}$.
First, let us estimate the in-plane derivatives of the sequence $\overline u_3^\e$. Recalling the definition of  $\ktabl{\vue}$, 
we can write
\begin{equation}
\begin{multlined}
    \label{eqn:boundinplanederiv}
        \nltwo[\frac{1}{2}{\e^{-1}}\pa \overline u_3^\e]{L^2(\o)} 
    \leq 
    \nltwo[\xoverline{\kappa}^\e_{3\alpha}(\vue) - \xoverline{Q}_{\alpha 3}]{L^2(\o)}
     +
     \nltwo[\frac{1}{2}\int_{-1}^0\pt\uea dx_3]{L^2(\o)}
     + \nltwo[\xoverline{Q}_{\alpha 3}]{L^2(\o)}
     \\
    =
        \nltwo[\xoverline{\kappa}^\e_{3\alpha}(\vue) - \xoverline{Q}_{\alpha 3}]{L^2(\o)}
     +
     \nltwo[\frac{1}{2}\uea(x', 0)]{L^2(\o)}
     + \nltwo[\xoverline{Q}_{\alpha 3}]{L^2(\o)}
     \leq C.
\end{multlined}
\end{equation}
We have used the triangle inequality and the crucial boundary condition 
${u}_\alpha^\e(x', -1)=0$ in the integration with respect to $x_3$.
The last inequality holds by the energy bound, because the biaxial set is bounded, and because the sequence $\uea(\cdot, 0)$ is bounded in $L^2(\o)$.
From the estimate above we get {$\pa \overline u_3^\e\to 0$ strongly in $L^2(\o)$.}
We can hence characterise 
\begin{equation}
\label{eqn:ktrealpha}
    \xoverline{\kappa}^\e_{3\alpha}(\vue)=\int_{-1}^0 \frac{1}{2}\left( \pa\uet+\pt \uea \right) dx_3 =\frac{1}{2}\left( \pa \overline u^\e_3+\uea(x',0) \right)  \to  \frac{1}{2}\xoverline u_\alpha(x')=\overline{k}_{3\alpha}, \text{ as }\e \to 0 \text{ in }L^2(\o,\R^2).
\end{equation}

Here, $\uea(x', 0)$ is the trace of $\uea$ at the interface $\o\times \{0\}$ and we have 
$\uea(x', 0)\to \xoverline u_\alpha^*(x')$ strongly, as $\e\to 0$, in $L^2(\o)$ owing to the trace theorem and the fact that $\uea(x', \cdot)$ is uniformly bounded in $H^1((-1, 0))$ a.e. $x'\in \o$.
By plugging into~\eqref{eqn:liminfnematictwo} the expression of $\xoverline\limkta$ in~\eqref{eqn:ktrealpha} and collecting inequality~\eqref{eqn:liminfnematic} we get

\begin{equation}
    \label{eqn:lowerboundnem}
\begin{multlined}
    \liminf_{\e\to 0}\infqbb 
\frac{1}{2}\int_{\Ob}
\left(  \left( \kblue -Q\right)^2
    +\frac{\lab}{2\mub} \tr\left( \kblue \right)^2
 + \delta_\e^2\e^2|\nabla' Q|^2 +\delta_\e^2|\pt Q|^2 \right)  dx 
\\
\geq \inf_{\overline Q\in L^2(\o,\Qbb)}\frac{1}{2}\int_\o  \left\{ \frac{\lab}{2\mub+\lab}\left( \xoverline Q_{33} \right)^2+2\tn{\frac{1}{2}\overline u_{\alpha}-\xoverline Q_{\alpha 3}}^2  + |\xoverline Q_{\alpha \beta}|^2\right\}dx'
    =
    \frac{1}{2}\int_\o
    \operatorname{dist}^2_{\mu,\lambda} \left( A(\overline{ \vect  u}),\Qbb \right) dx'.
\end{multlined}
\end{equation}
Combining \eqref{eqn:lbfilm} and \eqref{eqn:lowerboundnem} we obtain the desired lower bound inequality for the entire structure.
\end{proof}

We now exhibit a sequence that, for each target in the limit space, attains the lower bound.

\begin{lemma}[Upper bound]
\label{lem:upperboundbi}
There exists a sequence $(\vect v^\e)_{\e>0}\in L^2(\O, \R^3)$ such that $\vect v^\e\to \vect u=(\xoverline{\vect u}, 0)$ strongly in $L^2(\O,\R^3)$ and that, for every $\xoverline{\vect u}\in H^1(\o, \R^2)$
\[
    \limsup_{\e\to 0} {\FBe}(\urs)\leq E_0(\vect u). 
\]
\end{lemma}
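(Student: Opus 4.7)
The plan is to construct an explicit recovery sequence $(\vue, Q^\e)$ whose rescaled energy matches the lower bound term by term in the limit, essentially reversing the pointwise minimizations carried out in Lemma~\ref{lem:lowerboundbi}. Using continuity of $E_0$ along mollifications and the fact that $\Qbb$ is convex (so that convolution with a standard mollifier preserves the pointwise constraint $Q(x') \in \Qbb$), I would first reduce to the case of smooth $\xoverline{\vect u}$ and to a smooth pointwise minimizer $\xoverline Q^{*} \in C^\infty(\o, \Qbb)$ of the distance problem~\eqref{eqn:distance}; a diagonal extraction recovers the general case at the end.

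The ansatz reflects the structure revealed in the proof of Lemma~\ref{lem:lowerboundbi}. In the film $\Of$ I take
\[
\uea(x) = \xoverline u_\alpha(x'), \qquad \uet(x) = -\e^2\,\tfrac{\lambda}{\lambda+2\mu}\, x_3\, \eaa(\xoverline{\vect u})(x') + \e^2\,\tfrac{2\mu}{\lambda+2\mu}\, \xoverline Q_{33}^{*}(x'),
\]
so that the rescaled transverse film strain converges to the optimal value $-\tfrac{\lambda}{\lambda+2\mu}\eaa(\xoverline{\vect u})$ identified in~\eqref{eqn:liminffilm} and the rescaled shear is of order $\e$. In the bonding layer $\Ob$ I take
\[
\uea(x) = (1+x_3)\,\xoverline u_\alpha(x'), \qquad \uet(x) = \e^2\,\tfrac{2\mu}{\lambda+2\mu}(1+x_3)\,\xoverline Q_{33}^{*}(x'),
\]
which respects the Dirichlet condition at $x_3 = -1$, produces the shear $\pt \uea = \xoverline u_\alpha$ that delivers $\tfrac{1}{2}\xoverline u_\alpha$ in the limit of $\ktabl$, and yields $\kttbl \to \tfrac{2\mu}{\lambda+2\mu}\xoverline Q_{33}^{*}$, exactly the optimum identified in~\eqref{eqn:liminfnematictwo}. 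Finally I set $Q^\e(x) = \xoverline Q^{*}(x')$, independent of $x_3$.

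The admissibility checks are routine: continuity of $\vue$ at the interface $x_3 = 0$ (both $\uea$ and $\uet$ agree on both sides by construction, thanks to the trace-matching correction added in the film), $\vue \in \mathcal{V}$, $Q^\e \in H^1(\Ob, \Qbb)$, and strong $L^2(\O, \R^3)$ convergence $\vue \to (\xoverline{\vect u}, 0)$. The core computation is then the energy evaluation: expanding the film integrand yields $\tfrac{1}{2}\int_\o \bigl[\tfrac{\lambda}{\lambda+2\mu}|\eaa(\xoverline{\vect u})|^2 + |\eab(\xoverline{\vect u})|^2\bigr]\,dx'$ via the algebraic identity already used at the optimum of~\eqref{eqn:liminffilm}, and the bonding-layer integrand reproduces the density of $\operatorname{dist}^2_{\mu,\lambda}(A(\xoverline{\vect u}), \Qbb)$ by the same identity applied at the chosen $\kttbl$ together with the pointwise minimality of $\xoverline Q^{*}$. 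The curvature term is bounded by $C\delta_\e^2 \e^2 \nh[\nabla' \xoverline Q^{*}]{L^2(\o)}^2 \to 0$, and the $x_3$-independence of the leading-order bonding-layer integrand gives the two-dimensional limit upon integration over $x_3 \in (-1, 0)$.

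I expect the delicate point to be the design of the transverse component $\uet$: the Dirichlet condition at $x_3 = -1$, the continuity at $x_3 = 0$, and the target values of $\e^{-2}\pt\uet$ on each side of the interface are three constraints to reconcile simultaneously. The $x_3$-independent correction $\e^2\,\tfrac{2\mu}{\lambda+2\mu}\xoverline Q_{33}^{*}(x')$ added to $\uet$ in the film is a trace-matching term whose $\pt$ vanishes and whose in-plane derivatives are of order $\e^2$, so it enters the rescaled shear $\kta{\vue}$ only at order $\e$ and does not affect the film limit energy. Verifying that all such higher-order contributions vanish in the limit, and that the diagonal procedure in the density step is compatible with the two distinct topologies in play (strong $H^1$ for $\xoverline{\vect u}$ versus pointwise values in $\Qbb$ for $\xoverline Q^{*}$), is the technical heart of the argument.
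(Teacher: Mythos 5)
Your proposal is essentially correct and follows the same route as the paper: the same recovery ansatz with affine in-plane profile in $\Ob$, the same optimal transverse profiles $\e^{-2}\pt v_3^\e \to -\tfrac{\lambda}{\lambda+2\mu}\eaa(\xoverline{\vect u})$ in $\Of$ and $\to \tfrac{2\mu}{\lambda+2\mu}\xoverline Q_{33}$ in $\Ob$, a thickness-constant $Q^\e$, and a density/regularization step for $\xoverline{\vect u}$ and $\xoverline Q$; the paper packages the regularization through auxiliary sequences $\hat h^\e, h^\e\in C^\infty_c(\o)$ with $\e\nltwo[\nabla' h^\e]{L^2}\to 0$, works with arbitrary $\xoverline Q\in H^1(\Ob,\Qbb)$, and only passes to the minimizer $\xoverline Q^*$ after the density step, while you choose to substitute $\xoverline Q^*$ and mollify directly. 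One small imprecision: the pointwise minimizer $\xoverline Q^*$ is obtained by projecting $A(\xoverline{\vect u})$ onto the convex set $\Qbb$, a map that is only Lipschitz, so $\xoverline Q^*$ is generally not $C^\infty$ even for smooth $\xoverline{\vect u}$; it is, however, $H^1(\o,\Qbb)$ (Lipschitz composed with $H^1$), which is all the curvature estimate and the $\e^{-1}\pa\uet$ term actually require, so this does not affect the validity of the argument provided the diagonal step is phrased in terms of $L^2$-convergence of $\xoverline Q$ rather than exact minimality of each regularized approximant.
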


\begin{proof}
Let us introduce an auxiliary two-variable functional, namely
for $(\vect u, Q)\in \mathcal V_0\times H^1(\Ob,\Qbb)$ with $\vect u(\cdot, -1)=0$ a.e. $x'\in \o$ define
\begin{multline}
\label{eqn:energyIbe}
 I_{B,\e}(\vect u,Q):=
\frac{1}{2}\int_\Of \left\{ \tn{\ke{\vect u}}^2 + \frac{\laf}{2\muf} \tr^2(\ke{\vect u})   \right\} dx
    \\+
    \frac{1}{2}\int_\Ob \left\{ \tn{\kebl{\vect u}-Q}^2 + \frac{\laf}{2\muf} \tr^2(\kebl{\vect u})
+  \delta_\e^2 \left( \e^2|\nabla' Q|^2+\tn{\pt Q}^2  \right) \right\}  dx
 \end{multline} and $I_{B,\e}(\vect u,Q):=+\infty$ otherwise in $L^2(\O,\R^3)\times L^2(\Ob,\R^{3\times 3})$.

If $\vect{u}\notin \mathcal{V}_0$ 
then there is nothing to prove. 
Hence we assume $\vect u = (\xoverline{\vect u}, 0), \xoverline{\vect u}\in H^1(\o,\R^2)$ in what follows and define the recovery sequence
\[
    v_\alpha^\e(x):=
    \begin{cases}
        \xoverline u_\alpha(x'), &\text{ if }x\in \Of\\
        (x_3+1)\xoverline u_\alpha(x'), &\text{ if }x\in \Ob\\
    \end{cases}, \quad
    v_3^\e(x):=
    \begin{cases}
        \e^2 (x_3 \hat h^\e(x')+{h^\e(x')}), &\text{ if }x\in \Of\\
        {\e^2} (x_3+1) h^\e(x'), &\text{ if }x\in \Ob\\
    \end{cases}
\]
\[
   \text{ and } Q^\e\equiv \xoverline Q\in H^1(\o,\Qbb),
\]
where 
\begin{equation}
    \label{eqn:auxconverglimsup}
C^{\infty}_c(\o)\ni \hat h^{\e}(x')\to \frac{-\laf}{\laf+2\muf} \eaa(\xoverline {\vect u})    \quad\text{ and }\quad
C^{\infty}_c(\o)\ni h_{\e}(x')\to \frac{2\mub}{\lab+2\mub}\xoverline Q_{33}, \text{ as }\e\to 0
\end{equation}
with control on the gradient, e.g. 
$\lim_{\e\to 0}\e\|\nabla' \hat h^{\e}\|_{L^2(\o)}=0$ and $\lim_{\e\to 0}\e\|\nabla'  h^{\e}\|_{L^2(\o)}=0$. 
The in-plane components of displacement are fixed with respect to $\e$, whereas the out-of-plane terms at order $\e^{2}$ achieve optimality (recall the explicit minimisations while establishing the lower bound).
{Note that continuity (of scaled displacements) requires that out-of-plane components of the recovery sequence be of the same order of magnitude in both $\Of$ and $\Ob$.}
As for the order tensor note that, with a slight abuse of notation, we consider the constant extension across the thickness of $\xoverline Q$.

Plugging the recovery sequence in the expression of the energy{~\eqref{eqn:energyIbe}}, we can compute
\begin{equation}
\label{eqn:limsupbizero}
    \begin{aligned}
        I_{B,\e}(\vect v^\e,Q^\e)=&
    \frac{1}{2}\int_\Of \left( \hat h^2_\e + \tn{\eab(\xoverline {\vect u})}^2 + 2\tn{\frac{1}{2}\e {(x_3\nabla' \hat h^\e+\nabla' h^\e)}}^2 + \frac{\laf}{2\muf}\tn{\hat h^\e + \eaa(\xoverline {\vect u})}^2 \right) dx\\
    &\quad+ \frac{1}{2}\int_\Ob 
    \left(
     (h^\e-\xoverline Q^\e_{33})^2 + \tn{\e (x_3+1) \eab(\xoverline{\vect u}) - \xoverline Q^\e{\alpha \beta}}^2 + 2 \tn{\frac{1}{2}\left( \xoverline u_\alpha + \e(x_3+1)\nabla' h^\e\right) - \xoverline Q_{\alpha 3}  }^2 \right)dx\\
     &\quad +\frac{1}{2}\int_\Ob
     \frac{\lab}{2\muf} \left( h^\e + \e(x_3+1)\eaa(\xoverline{\vect u}) \right)^2 +
         \delta_\e^2
         \left( \e^2|\nabla' \xoverline Q|^2+\tn{\pt \xoverline Q}^2  \right) dx.
    \end{aligned}
\end{equation}
We now pass to the limit in~\eqref{eqn:limsupbizero} using the convergences~\eqref{eqn:auxconverglimsup} established for $\hat h^\e, h^\e$ and noting that, because $\delta_\e\to 0 $ as $\e\to 0$ and $\xoverline Q\in H^1(\o, \Qbb)$, we have  $\int_\Ob  \left( \e^2\delta_\e^2\tn{\nabla' \xoverline Q}^2 + \delta_\e^2\tn{\pt \xoverline Q}^2 \right) dx\to 0$, and we are left with
\begin{multline}
\label{eqn:limsupbi}
   \limsup_{\e\to 0}I_{B,\e}(\vect v^\e,Q^\e) = 
   \frac{1}{2}\int_\Of \left( \frac{\laf}{\laf+2\muf} \eaa(\xoverline{\vect u})^2
+ \tn{\eab(\xoverline{\vect u})}^2 \right)dx 
\\
+\frac{1}{2}\int_\Ob \left( 
 \frac{\lab}{\lab + 2\mub} \xoverline Q_{33}^2 + 2 \tn{\frac{\overline{u}_\alpha}{2}-\xoverline Q_{\alpha 3}}^2+\tn{\xoverline Q_{\alpha \beta}}^2 \right) dx.
\end{multline}
Now, the following holds
\begin{equation}
\label{eqn:limsupbiprime}
\begin{multlined}
    \limsup_{\e\to 0}\left( \inf_{\overline Q \in H^1(\Ob; \Qbb)} I_{B,\e}(\vect v^\e,\xoverline Q) \right)\leq \limsup_{\e\to 0}I_{B,\e}(\vect v^\e,\xoverline Q)\\
= 
   \frac{1}{2}\int_\o \left( \frac{\laf}{\laf+2\muf} \eaa(\xoverline{\vect u})^2
+ \tn{\eab(\xoverline{\vect u})}^2 
 +\frac{\lab}{\lab + 2\mub} \xoverline Q_{33}^2 + 2 \tn{\frac{\overline{u}_\alpha}{2}-\xoverline  Q_{\alpha 3}}^2+\tn{\xoverline Q_{\alpha \beta}}^2 \right) dx'.
\end{multlined}
\end{equation}
Here the first inequality is trivial and the equality follows integrating the right hand side of \eqref{eqn:limsupbi} across the thickness as none of the variables depends on $x_3$.
Then, observe that any tensor in $L^2(\o,\Qbb)$ 
can be approximated by density with a sequence of biaxial tensors in $H^1(\o, \Qbb)$ 
in the strong topology of $L^2(\o,\R^{3\times 3})$.
This fact follows from convolution and the convexity and compactness of the set $\Qbb$.
Since the functional on the second line of~\eqref{eqn:limsupbiprime} is continuous in the strong topology of $L^2(\o,\R^{3\times 3})$, the same inequality holds for $\xoverline Q\in L^2(\o, \Qbb)$ as well.
Finally, the chain of inequalities thus obtained still holds if we replace the generic tensor $\overline{Q}$ in $L^2(\o,\Qbb)$ with the (unique) map $\overline{Q}^*$ which minimises the functional on the second line of~\eqref{eqn:limsupbiprime} in $L^2(\o,\Qbb)$ for fixed $\xoverline{\vect u}$.

This leaves us with
\begin{equation}
\begin{multlined}
    \limsup_{\e\to 0}\left( \inf_{\overline Q \in H^1(\Ob, \Qbb)} I_{B,\e}(\vect v^\e,\xoverline Q) \right)\\
    \leq 
    \frac{1}{2}\int_\o \left( \frac{\laf}{\laf+2\muf} \eaa(\xoverline{\vect u})^2
+ \tn{\eab(\xoverline{\vect u})}^2 
 +\frac{\lab}{\lab + 2\mub} (\xoverline{Q}^*_{33})^2 + 2 \tn{\frac{\overline{u}_\alpha}{2}-\xoverline{Q}^*_{\alpha 3}}^2+\tn{\xoverline {Q}_{\alpha \beta}^*}^2 \right) dx'\\
 =\frac{1}{2}\int_\o \left( \frac{\laf}{\laf+2\muf} \eaa(\xoverline{\vect u})^2
+ \tn{\eab(\xoverline{\vect u})}^2 +
 \operatorname{dist}^2_{\mu,\lambda} \left( A(\xoverline{ \vect  u}),\Qbb \right) \right)dx'
\end{multlined}
\end{equation}
as required.

\end{proof}

\subsection{The uniaxial model}\label{1711071646}
We now tackle the uniaxial case. The main difference with respect to the biaxial tensor model is due to the necessity to explicitly construct an optimal microstructure that allows relaxation of the energy and convexification of the nematic manifold.
We establish the following result.

\label{sub:the_uniaxial_case}

\begin{theorem}\label{1710312258}
Let $\Ob=\o\times (-1, 0)$, $\Of=\o\times (0, 1)$, $\O=\o\times (-1, 1)$, $\o\subset \R^2$ open, bounded, and with Lipschitz boundary, $\lambda, \mu >0$ and 
 $\FUe$ as defined in \eqref{eqn:workingenergy}, that is
\[
	{\FUe}(\vect u):=
	\begin{cases}
    \displaystyle\frac{1}{2}\int_\Of \left( \tn{\ke{\vect u}}^2 + \frac{\laf}{2\muf} \tr^2(\ke{\vect u})   \right) dx
     \\
     +\infqu\displaystyle \frac{1}{2}\int_\Ob \left(\tn{\kebl{\vect u}-Q}^2 + \frac{\lab}{2\muf} \tr^2(\kebl{\vect u}) +  \delta_\e^2 \left( \e^2|\nabla' Q|^2+\tn{\pt Q}^2 \right)\right) dx  
 &\text{if } \vect u\in \mathcal V\\
		+\infty \quad\quad\quad\quad\quad\quad\quad\quad\quad\quad\quad\quad\quad\quad\quad\quad\quad\quad\quad\quad\quad\quad\quad\quad\text{otherwise in }L^2(\O,\R^3).
	\end{cases}
\]
Then 
$\Gamma\hbox{-}    \lim_{\e\to 0} {\FUe}(\vue)=E_0(\vect u)$,
where the limit is computed in the strong $L^2(\Of,\R^3)$ topology and 
\begin{eqnarray}\label{form0102}
E_0(\vect u)=
\begin{cases}
\displaystyle{ \frac{1}{2}\int_\o \left\{ \frac{\laf}{\laf+2\muf}|\eaa(\xoverline{\vect u})|^2 +  |\eab( \xoverline{\vect u})|^2 
+\operatorname{dist}^2_{\mu,\lambda} \left( A(\xoverline{ \vect  u}),\Qbb \right)\right\} dx'}
&  \text{if } \vect u \in \mathcal{V}_0\\
+\infty & \textrm{otherwise in }L^2(\O, \R^{3 }).
\end{cases}
\end{eqnarray}
Moreover, 
let $(\vect u_{\e})\subset L^2(\O,\R^3)$ be such that
$
\lim_{\e\to 0} \FUe(\vect u_{\e})
=\liminf_{\e\to 0} \FUe
$
then, up to a subsequence, the following holds
\begin{enumerate}[label=\emph{\roman*)}]
\item $\displaystyle{\lim_{\e \to 0}}(\inf E_{\e,U})=\min E_0 $
\item
$\vue \to ((1\wedge x_3+1)\xoverline{\vect u},0)$ with $\xoverline{\vect u}\in H^1(\o,\R^2)$, where $
E_0(\xoverline{\vect u})=\min E_0
$.
\end{enumerate}

\label{thm:uniaxial}
\end{theorem}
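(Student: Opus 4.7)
The plan is to leverage Theorem~\ref{thm:biaxial} for the structural parts (compactness and liminf) and to concentrate all new work on the construction of a recovery sequence adapted to the non-convex manifold $\mathcal{Q}_U$. The liminf inequality is immediate: because $\mathcal{Q}_U \subset \mathcal{Q}_B$, the pointwise infimum defining $\widetilde E_{U,\e}$ is over a strictly smaller set than the one defining $\widetilde E_{B,\e}$, so $E_{B,\e}(\vect v) \leq E_{U,\e}(\vect v)$ for every admissible $\vect v$. The compactness analysis of Section~\ref{sub:compactness} uses only the coercivity of the elastic part and the uniform $L^\infty$ bound provided by boundedness of $\mathcal{Q}_U$, hence applies verbatim. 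Thus for any $\vect v^\e \to \vect u$ strongly in $L^2(\Of,\R^3)$, Lemma~\ref{lem:lowerboundbi} yields $\liminf_{\e\to 0} E_{U,\e}(\vect v^\e) \geq \liminf_{\e\to 0} E_{B,\e}(\vect v^\e) \geq E_0(\vect u)$.

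The core effort is the limsup inequality. Given $\vect u = (\xoverline{\vect u},0) \in \mathcal{V}_0$, let $\xoverline Q^* \in L^2(\o, \mathcal{Q}_B)$ be the optimal biaxial tensor realising the distance term in~\eqref{eqn:distance}. Since $\mathcal{Q}_B = \operatorname{co}\mathcal{Q}_U$, Carath\'eodory's theorem furnishes a pointwise convex decomposition $\xoverline Q^*(x') = \sum_{k=1}^{N} \theta_k(x') Q_k(x')$ with weights $\theta_k \geq 0$, $\sum_k \theta_k = 1$, and uniaxial values $Q_k(x') \in \mathcal{Q}_U$; by density and mollification I can assume the $\theta_k$ and $Q_k$ are piecewise smooth. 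I would then introduce an intermediate length scale $\eta_\e$ with $\delta_\e \ll \eta_\e \ll \e^{p+1}$, partition $\Ob$ into cells of size $\eta_\e$, and define $Q^\e \in H^1(\Ob,\mathcal{Q}_U)$ as a fine laminate taking the values $Q_k$ on cells with local volume fraction $\theta_k$, joined through narrow transition layers that trace curves within the non-convex manifold $\mathcal{Q}_U$. The curvature contribution is $O(\delta_\e^2/\eta_\e^2) \to 0$ by scale separation.

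To accommodate this oscillating $Q^\e$ at the level of the strain, I would correct the biaxial recovery sequence $\vect v^\e$ of Lemma~\ref{lem:upperboundbi} by a small, high-frequency perturbation $\vect w^\e$. This perturbation must be a two-frequency laminate: an in-plane frequency of order $1/\eta_\e$ together with an independent transverse frequency tuned to offset the anisotropic $\e^{-1}\pt$ scaling inside $\kebl{\cdot}$, whose jumps across cell interfaces are Hadamard-compatible with those of $Q^\e$. The general tools for assembling such laminates are gathered in the Appendix. Provided $\vect w^\e \to 0$ strongly in $L^2$ while $\kebl{\vect v^\e + \vect w^\e} - Q^\e$ approximates $\kebl{\vect v^\e} - \xoverline Q^*$ in $L^2(\Ob)$, the nematic energy converges to $\tfrac{1}{2}\int_\o \operatorname{dist}^2_{\mu,\lambda}(A(\xoverline{\vect u}), \mathcal{Q}_B)\,dx'$, and the film and interface terms pass over unchanged from the biaxial construction.

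Items~(i) and~(ii) then follow from standard $\Gamma$-convergence theory combined with equi-coercivity from Section~\ref{sub:compactness}. The compactness argument already identifies the limit of a minimising sequence in the film as $(\xoverline{\vect u},0)$ with $\xoverline{\vect u} \in H^1(\o,\R^2)$. In $\Ob$, the boundary condition $\vect u^\e(\cdot,-1)=0$, the vanishing of $\ett(\vect u^\e)$, and the energy control on the shear combination in $\ktablue$ force the limit to be affine in $x_3$, namely $(x_3+1)\xoverline{\vect u}$; continuity across $\o\times\{0\}$ matches the trace to $\xoverline{\vect u}$ from the film, yielding the advertised form $\vect u^\e \to ((1 \wedge (x_3+1))\xoverline{\vect u}, 0)$, and minimality follows from~(i) and lower semicontinuity of $E_0$. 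The main obstacle is undoubtedly the two-frequency laminate corrector: reconciling $H^1$ regularity of $Q^\e$ on the non-convex manifold $\mathcal{Q}_U$, rank-1 compatibility of strains, and the three separated length scales $\delta_\e \ll \eta_\e \ll \e^{p+1}$, together with the anisotropic rescaling inside $\kebl{\cdot}$, is where the entire technical weight of the theorem sits.
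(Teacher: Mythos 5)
Your overall architecture — reduce the $\Gamma$-liminf to the biaxial case via $E_{U,\e}\geq E_{B,\e}$, then spend all new effort on a recovery sequence with three separated scales $\delta_\e\ll\eta_\e\ll\e^{p+1}$ and a high-frequency corrector displacement tuned to the anisotropic scaling inside $\kebl{\cdot}$ — matches the paper's route, and you identify the right obstructions (rank-one compatibility, $H^1$ regularity on the non-convex manifold, scale hierarchy). However, there is a genuine gap at the heart of the limsup construction. You invoke Carath\'eodory's theorem to write $\xoverline Q^*(x')=\sum_k\theta_k(x')Q_k(x')$ with $Q_k\in\mathcal Q_U$, and then assert the existence of a corrector $\vect w^\e$ whose piecewise constant gradients are Hadamard-compatible with the resulting laminate. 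Carath\'eodory gives a \emph{convex} decomposition in the space of symmetric traceless matrices, but it gives no control whatsoever on rank-one connectivity of any lift to full (non-symmetric) gradients, which is precisely what is needed for a Lipschitz displacement to exist with piecewise constant gradient. The two conditions are not equivalent, and the passage from one to the other is exactly the hard content of the construction. The paper does \emph{not} use Carath\'eodory: Proposition~\ref{prop:recseq} in the Appendix supplies a bespoke decomposition of any $Q\in\mathcal Q_B$ into exactly four non-symmetric matrices $G_1,\dots,G_4$, arranged in a specific geometry (oblique or columnar strips depending on whether $a\neq -1/3$ or $a=-1/3$), whose symmetric parts are uniaxial and whose full gradients satisfy the Hadamard jump conditions across the interfaces — simultaneously. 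Only then does the lift $\vect f_n$ with $\nabla\vect f_n=F_n$ exist, and the anisotropic rescaling $\phi^\e\circ\vect f^\eta$ is applied to \emph{that} lift. Replacing this with an arbitrary Carath\'eodory decomposition, as you propose, leaves the existence of the corrector unproven; the tools in the Appendix do not upgrade an arbitrary decomposition to a compatible one, they sidestep Carath\'eodory entirely.

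A secondary, smaller issue: you claim the affine profile $(x_3+1)\xoverline{\vect u}$ in $\Ob$ for minimising sequences is \emph{forced} by the compactness bounds and boundary condition. The compactness argument only gives $\e\nabla'\vect u^\e\wto 0$ and control of a scaled shear combination; the paper explicitly notes that the weak limit $\vect u^*$ in $\Ob$ \emph{may depend on the subsequence} and is not fully identified there. The convergence claim in item~(ii) is a statement about \emph{minimisers} and is obtained from $\Gamma$-convergence plus equi-coercivity and the fact that the recovery sequence attains the bound, not from compactness alone. Your reasoning overreaches at that point.
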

Theorem~\ref{1710312258}
follows from matching a lower and an upper bound. 
As for the latter, we introduce a two-variable functional, construct a recovery sequence (pair) for displacements and the effective (thickness averaged) piecewise constant $Q$-tensors, further obtain the associated $\G\hbox{-}\limsup$ functional to approximate, by density, any (effective) $Q$-tensor in $L^2$.
We recover the full $\Gamma\hbox{-}$convergence result and the proof of Theorem \ref{1710312258} at the end of Section \ref{1711071646}.

\paragraph{The $\boldsymbol{\G\hbox{-}\liminf}$ inequality or the weak lower semicontinuity of $\boldsymbol{\FUe}$ } 

\begin{lemma}[$\G\hbox{-}\liminf$]
\label{lem:gliminfuniax}
Let $\vect u\in \mathcal V_0
$. For any sequence $(\vue)_{\e>0}\subset L^2(\O,\R^3): \vue \to \vect u$ strongly in $L^2(\Of, \R^3)$, we have
\[
    \liminf_{\e\to0}  \FUe( \vue)\geq E_0(\vect u) 
\]
where $\FUe$ and $E_0$ read as in Theorem~\ref{thm:uniaxial}.
\end{lemma}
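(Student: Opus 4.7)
The plan is to reduce the uniaxial lower bound to the biaxial one already proved in Lemma~\ref{lem:lowerboundbi}, exploiting the inclusion $\Qu\subset\Qbb$. Since the integrand of $\widetilde E_{X,\e}(\vect u,Q)$ does not depend on $X$ and only the admissible set for $Q$ changes between the two models, every pair $(\vect u,Q)$ admissible in the uniaxial problem is also admissible in the biaxial one. Taking the infimum over the larger set $H^1(\Ob,\Qbb)\supset H^1(\Ob,\Qu)$ can only decrease the value, so that the pointwise comparison
\begin{equation*}
\FUe(\vect v)\;\geq\;\FBe(\vect v)\qquad\text{for every }\vect v\in L^2(\O,\R^3)\text{ and every }\e>0
\end{equation*}
holds.

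From here the statement is immediate: for any sequence $(\vue)_{\e>0}\subset L^2(\O,\R^3)$ with $\vue\to\vect u$ strongly in $L^2(\Of,\R^3)$, combining the above pointwise bound with Lemma~\ref{lem:lowerboundbi} applied to the same sequence yields
\begin{equation*}
\liminf_{\e\to 0}\FUe(\vue)\;\geq\;\liminf_{\e\to 0}\FBe(\vue)\;\geq\;E_0(\vect u),
\end{equation*}
which is the desired inequality. If $\vect u\notin\mathcal V_0$ the right-hand side is $+\infty$ and compactness (Section~\ref{sub:compactness}) forces $\liminf\FUe(\vue)=+\infty$, since that compactness argument uses only coercivity and boundedness of $Q$, both of which are insensitive to whether $Q$ ranges in $\Qu$ or $\Qbb$.

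There is essentially no obstacle in the liminf: the only structural facts used to prove the biaxial lower bound were (i) convexity of the admissible set for $Q$ (exploited to rewrite the infimum as a distance and then to apply weak $L^2$ lower semicontinuity of a convex integrand), and (ii) the compactness/characterisation of $\xoverline{k}_{3\alpha}$, both of which survive taking the infimum over a strictly smaller, non\nobreakdash-convex set. The genuine difficulty of the uniaxial problem is postponed to the $\G\hbox{-}\limsup$ inequality, where the recovery sequence must produce, through fine microstructure, an effective $Q$-tensor in the convex hull $\Qbb$ from pointwise uniaxial order tensors; the loss of that information at the liminf level is precisely what makes the present step trivial.
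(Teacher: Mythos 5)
Your proposal is correct and uses exactly the same argument as the paper: the inclusion $\Qu\subset\Qbb$ gives the pointwise comparison $\FUe\geq\FBe$, after which one invokes the biaxial lower bound of Lemma~\ref{lem:lowerboundbi}. The paper states this in one line; your additional remarks on why compactness and the convexity-based steps are unaffected by restricting to $\Qu$ are accurate and make the reduction fully explicit, but they do not constitute a different route.
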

\begin{proof}
This result follows
simply noticing that  ${\FUe} (\cdot) \geq {\FBe} (\cdot)  $
and arguing as in Lemma~\ref{lem:lowerboundbi}.

\end{proof}

\paragraph{The $\boldsymbol{\G-\limsup}$ inequality or the existence of a recovery sequence for $\boldsymbol{\FBe}$} 

The discussion of the $\Gamma$-limsup inequality can be split into \emph{i)} constructing a recovery sequence and \emph{ii)} matching the ansatz-independent lower energy bound with the upper bound computed on the recovery sequence.
The first step unveils the coupling mechanisms whilst the second requires careful energy estimates taking into account the three dimensional bulk microstructure and boundary layers at different scales. 
Because the energy functional is local and additive, we can match bounds term to term, separately, in the two layers.
We show that each target in the limit space $\left\{\vect v=(\xoverline {\vect v},0), \xoverline {\vect v} \in H^1(\o,\R^2) \right\}$ can be approximated by a suitably convergent, optimal, recovery sequence (cf. Corollary~\ref{cor:gupperlim} {and Theorem~\ref{thm:glimsupuniax}}). 
In order to accomplish this task we prove an intermediate relaxation result for a functional depending on a displacement and a thickness-invariant Q-tensor field (Theorem~\ref{thm:uniaxial}). 
The limit energy, as a consequence of the dimension reduction, yields a functional defined on a two-dimensional domain $\o$ where the optic tensor appears, as it is natural at first order, only through its thickness average. 
We establish the desired $\Gamma$-convergence result exploiting the following corollary.

\let\oldurs\urs
\let\oldQrs\Qrs
\renewcommand{\urs}{\vect v^k}
\renewcommand{\Qrs}{Q^k}

\begin{corollary}[$\G$-upper limit]\label{cor:gupperlim}
Under the assumptions of Theorem \ref{1710312258}
$$
    \Gamma\hbox{-}\limsup_{\e \to 0} {E_{U,\e}}(\vect u)\leq E_0(\vect u),
$$
where the $\Gamma\hbox{-}$lim sup is done in the strong $L^2(\Of,\R^3)$-topology.
\end{corollary}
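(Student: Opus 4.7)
The strategy is to reduce the uniaxial $\Gamma$-limsup to its biaxial counterpart, already established in Lemma~\ref{lem:upperboundbi}, via the intermediate relaxation Theorem~\ref{thm:glimsupuniax} invoked in the preceding paragraph. Fix $\vect u \in \mathcal V_0$ (otherwise the inequality is trivial) and write $\vect u = (\xoverline{\vect u}, 0)$ with $\xoverline{\vect u} \in H^1(\o, \R^2)$. Let $\xoverline Q^\star \in L^2(\o, \Qb)$ denote the unique pointwise minimiser realising the weighted distance that enters $E_0(\vect u)$. The key observation is that $\Qb = \operatorname{co}\Qu$: any biaxial tensor is a convex combination of uniaxial ones and hence can be realised as the weak limit of a fine $\Qu$-valued laminate. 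This dynamical convexification is exactly what the intermediate theorem provides.

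The plan then unfolds in three steps. First, approximate $\xoverline Q^\star$ by mollification to obtain $\xoverline Q^j \in C^\infty(\overline\o, \Qb)$ with $\xoverline Q^j \to \xoverline Q^\star$ strongly in $L^2(\o, \R^{3\times 3})$; the pointwise constraint is preserved because $\Qb$ is convex and compact. Second, apply Theorem~\ref{thm:glimsupuniax} to the pair $(\xoverline{\vect u}, \xoverline Q^j)$ to produce, for each $j$, sequences $\vect v^{\e,j} \in \mathcal V$ and $Q^{\e,j} \in H^1(\Ob, \Qu)$ such that $\vect v^{\e,j} \to \vect u$ strongly in $L^2(\Of, \R^3)$ and
$$\limsup_{\e \to 0} \widetilde E_{U,\e}(\vect v^{\e,j}, Q^{\e,j}) \leq \widetilde E_0(\xoverline{\vect u}, \xoverline Q^j),$$
where $\widetilde E_0(\xoverline{\vect u}, \xoverline Q)$ denotes the two-variable limit functional obtained at the end of Lemma~\ref{lem:upperboundbi} with $\xoverline Q$ in place of the pointwise minimiser. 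Third, continuity of $\widetilde E_0(\xoverline{\vect u}, \cdot)$ in the strong $L^2(\o)$ topology gives $\widetilde E_0(\xoverline{\vect u}, \xoverline Q^j) \to E_0(\vect u)$, so Attouch's diagonal extraction lemma produces $j = j(\e) \to \infty$ along which the double limsup collapses. Setting $\vect w^\e := \vect v^{\e, j(\e)}$, one has $\vect w^\e \to \vect u$ strongly in $L^2(\Of, \R^3)$ and, since $E_{U,\e}(\vect w^\e) \leq \widetilde E_{U,\e}(\vect w^\e, Q^{\e, j(\e)})$ by definition of the marginal as an infimum over $\Qu$-valued tensors, the desired inequality $\limsup_{\e \to 0} E_{U,\e}(\vect w^\e) \leq E_0(\vect u)$ follows.

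The substantive difficulty is entirely contained in Theorem~\ref{thm:glimsupuniax}, which the corollary merely packages through density and diagonalisation. There one has to oscillate $Q$ at a mesoscopic scale $\eta_\e$ with $\delta_\e \ll \eta_\e \ll \e$, so that the Frank curvature contribution $\delta_\e^2 |\nabla Q|^2 \sim (\delta_\e/\eta_\e)^2$ remains negligible; simultaneously one must adjust the companion displacement so that the rank-one jumps across laminate interfaces are compatible with the uniaxial wells (Hadamard condition), and then smooth those interfaces across narrow comfort zones of width $o(\eta_\e)$ to ensure the $H^1$ regularity required by the order-tensor energy. In-plane and transverse laminations have to be combined to explore the entire convex hull $\Qb$ starting from the non-convex well $\Qu$. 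The present corollary, by contrast, is a soft packaging step once that genuinely three-dimensional construction is in hand.
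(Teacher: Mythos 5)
Your proposal is correct and identifies the right reduction: apply the two-variable Theorem~\ref{thm:glimsupuniax}, evaluate at the pointwise minimiser $\xoverline Q^*$ (unique by strict convexity of $I_0(\xoverline{\vect u},\cdot)$ on the convex set $L^2(\o,\Qbb)$), and pass to the marginal $E_{U,\e}=\inf_Q I_{U,\e}(\cdot,Q)$ using $E_{U,\e}(\vect v^\e)\le I_{U,\e}(\vect v^\e,Q^\e)$. That is exactly the structure of the paper's argument, and it is sound.

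The point where your route diverges, though, is the extra mollification/diagonalisation layer. You replace $\xoverline Q^*$ by smooth approximants $\xoverline Q^j\to\xoverline Q^*$ in $L^2$, apply the theorem to each pair $(\xoverline{\vect u},\xoverline Q^j)$, and then invoke a diagonal extraction (Attouch) to collapse the two limits. This is not wrong, but it duplicates work that has already been absorbed into Theorem~\ref{thm:glimsupuniax} itself: as stated, that theorem holds for \emph{any} $\xoverline Q\in L^2(\Ob,\Qbb)$ constant along $x_3$, because Step~4 of its proof already performs the density argument from piecewise-constant to general $L^2$ tensors (using that piecewise-constant biaxial tensors are $L^2$-dense in $L^2(\o,\Qbb)$, that the right-hand side is strongly $L^2$-continuous in $Q$, and that the $\Gamma$-$\limsup$ is weakly lower semicontinuous). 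So you may directly apply the theorem to $\xoverline Q^*$, which is what the paper does: $\Gamma\text{-}\limsup_{\e\to 0}I_{U,\e}(\vect u,\xoverline Q^*)\le I_0(\xoverline{\vect u},\xoverline Q^*)=\min_{\xoverline Q}I_0(\xoverline{\vect u},\xoverline Q)=E_0(\vect u)$, and then the passage to the infimum over $Q\in H^1(\Ob,\Qu)$ only decreases the left side. The content you describe in the closing paragraph (curvature scaling $\delta_\e\ll\eta_\e\ll\e$, Hadamard compatibility, comfort zones, combining in-plane and transverse laminations to fill out $\operatorname{co}\Qu=\Qbb$) is accurate, but it belongs to the proof of Theorem~\ref{thm:glimsupuniax}, not to the corollary, and need not be recapitulated here.
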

\noindent Corollary~\ref{cor:gupperlim} is a consequence of the following $\G$-convergece theorem for a two-variable functional.
The proof of Corollary~\ref{cor:gupperlim} is postponed after the proof of Theorem~\ref{thm:glimsupuniax}.

\renewcommand{\urs}{\vect v^k}
\renewcommand{\Qrs}{Q^k}
\newcommand{\enfilm}[1]{I_{U,\e}(#1)\rfloor_{\Of}}
\newcommand{\ennema}[1]{I_{U,\e}(#1)\rfloor_{\Ob}}
Let us introduce an auxiliary two-variable functional, namely
for $(\vect u, Q)\in \mathcal{V}\times H^1(\Ob,\Qu)$ define
\begin{equation}
\label{eqn:twovarenergy}
\begin{aligned}
I_{U,\e}(\vect u,Q):=&
\frac{1}{2}\int_\Of \left\{ \tn{\ke{\vect u}}^2 + \frac{\laf}{2\muf} \tr^2(\ke{\vect u})   \right\} dx\\
    &\qquad+
    \frac{1}{2}\int_\Ob \left\{ \tn{\kebl{\vect u}-Q}^2 + \frac{\laf}{2\muf} \tr^2(\kebl{\vect u})
    +  \delta_\e^2 \left( \e^2|\nabla' Q|^2+\tn{\pt Q}^2  \right) \right\}  dx
\end{aligned}   
\end{equation}
extended $I_{U,\e}(\vect u,Q):=+\infty$ otherwise in $L^2(\O,\R^3)\times L^2(\Ob,\R^{3\times 3})$. 
We have the following theorem.

\begin{theorem}[$\G$-upper limit]
\label{thm:glimsupuniax}
For all $\vect u\in L^2(\O,\R^3)$, and $ \xoverline{Q}\in L^2(\O_b,\Qbb)$ constant along $x_3$, then
 \[
    \Gamma\hbox{-}\limsup_{\e \to 0} {I_{U,\e}}(\vect u, \xoverline Q)\leq I_0(\xoverline{\vect u}, \xoverline Q),
 \]
where ${I_{U,\e}}$ is defined in~\eqref{eqn:twovarenergy},
 the $\G-\limsup$ is computed for the strong $L^2(\Of,\R^3)$-topology in the variable $\vect u$ and the weak $L^2(\Ob,\R^{3\times 3})$-topology in the variable $\xoverline Q$, and $I_0$ reads
\[
    I_0({\vect u}, \xoverline Q):= \frac{1}{2} \int_\o \left(  |\eab(\xoverline{\vect u})|^2 + \frac{ \laf}{\laf + 2\muf} (\eaa(\xoverline{\vect u}))^2 
    + \frac{\lab}{\lab+2\mub} (\xoverline Q_{33})^2 + 2 \left|\frac{1}{2}\xoverline u_\alpha-\xoverline Q_{\alpha 3}\right|^2 + |\xoverline Q_{\alpha \beta}|^2 \right)dx'
\]
when $\vect u{=(\xoverline{\vect u}, 0)}\in \mathcal{V}_0$ and $\xoverline Q\in L^2(\o,\Qbb)$, 
and
$I_0(\vect u, Q):=+\infty$ otherwise in $L^2(\O,\R^3)\times L^2(\Ob,\R^{3\times 3})$.

\end{theorem}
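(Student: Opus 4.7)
The plan is to construct, for every admissible target pair $(\vect u, \xoverline Q)$ with $\vect u = (\xoverline{\vect u}, 0) \in \mathcal V_0$ and $\xoverline Q \in L^2(\o, \Qbb)$ (constant in $x_3$), a recovery sequence $(\vect v^\e, Q^\e) \in \mathcal V \times H^1(\Ob, \Qu)$ converging to $(\vect u, \xoverline Q)$ in the prescribed topologies (strong $L^2(\Of, \R^3)$ and weak $L^2(\Ob, \R^{3\times 3})$ respectively), and such that $\limsup_\e I_{U,\e}(\vect v^\e, Q^\e) \leq I_0(\vect u, \xoverline Q)$. The crux is that $Q^\e$ is constrained to the non-convex \emph{uniaxial} set $\Qu$ while the target $\xoverline Q$ ranges in the full convex hull $\Qbb = \operatorname{co}\Qu$: the biaxial target must thus be recovered only weakly, through a fine uniaxial microstructure, and the associated displacement must be built so as to maintain symmetric rank-one compatibility (Hadamard jump conditions) across every interface, with a vanishing Frank curvature cost.

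Step one is reduction by density. Approximate $\xoverline Q$ in the strong $L^2(\o, \R^{3\times 3})$ topology by piecewise constant maps taking finitely many values $Q_i \in \Qbb$ on a partition $\{\omega_i\}$ of $\o$; since $I_0$ is locally Lipschitz in $\xoverline Q$, a standard diagonal argument reduces the problem to such targets. On the film $\Of$ one then carries over the construction of Lemma~\ref{lem:upperboundbi} verbatim: set $v^\e_\alpha(x) = \xoverline u_\alpha(x')$ and $v^\e_3(x) = \e^2(x_3 \hat h^\e(x') + h^\e(x'))$ with $\hat h^\e, h^\e \in C^\infty_c(\o)$ converging to the pointwise minimisers identified in the lower-bound proof. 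This reproduces the first two terms of $I_0$ exactly.

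Step two is the coupled laminate construction on each cell $\omega_i \times (-1,0)$. Decompose $Q_i = \sum_{k=1}^{N_i} \lambda_k^i N_k^i$ as a convex combination of $N_k^i \in \Qu$, which is admissible because $\Qbb = \operatorname{co}\Qu$. Following the quasiconvex-envelope machinery of~\cite{desimone2000material} adapted to the order-tensor setting in~\cite{cesana2010relaxation,cesana2011nematic}, build a (possibly nested) layered oscillation of $Q^\e$ at a mesoscopic scale $\eta_\e$ with volume fractions $\lambda_k^i$, so that $Q^\e \rightharpoonup Q_i$ weakly in $L^2$. Couple it to a piecewise affine in-plane displacement $\hat{\vect v}^\e$ whose rescaled strain $\kebl{\hat{\vect v}^\e}$ equals $N_k^i$ on each sub-layer and is continuous across interfaces thanks to symmetric rank-one matching. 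Superimpose the macroscopic field $(x_3+1)\xoverline{\vect u}(x')$ so as to enforce the boundary condition $\vect v^\e(\cdot,-1)=0$ and to produce the correct asymptotic shear trace $\overline k_{3\alpha} \to \tfrac12 \xoverline u_\alpha$, exactly as in~\eqref{eqn:ktrealpha}. Finally, mollify $Q^\e$ across the sharp interfaces over a transition thickness $\rho_\e$ with $\delta_\e \ll \rho_\e \ll \eta_\e$ in order to land in $H^1(\Ob, \Qu)$, and adapt $\hat{\vect v}^\e$ consistently on these strips.

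Step three is the energy book-keeping and passage to the limit. By construction, $\kebl{\vect v^\e} - Q^\e$ vanishes off the transition strips and is uniformly bounded on them, the total measure of the strips being $O(\rho_\e/\eta_\e) \to 0$. Choosing $\kttbl{\vect v^\e}$ as the pointwise minimiser of the trace term produces the prefactor $\lab/(\lab+2\mub)$ in front of $(\xoverline Q_{33})^2$, and after thickness integration the shear contribution reduces to $2\left|\tfrac12 \xoverline u_\alpha - \xoverline Q_{\alpha 3}\right|^2$, matching $I_0$. The Frank curvature cost obeys
\[
\delta_\e^2 \int_\Ob \bigl( \e^2 |\nabla' Q^\e|^2 + |\pt Q^\e|^2 \bigr) dx \; \lesssim \; \frac{\delta_\e^2}{\rho_\e \eta_\e} \; \longrightarrow \; 0,
\]
which vanishes precisely because of the scale hierarchy $\delta_\e \ll \rho_\e \ll \eta_\e$. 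The main obstacle is Step two: unlike in the biaxial case, two generic tensors in $\Qu$ do not differ by a symmetric rank-one tensor, so the microstructure has to be built as iterated (nested) laminates whose combinatorics must be calibrated with the symmetries of the nematic manifold while keeping the total interfacial area of order $1/\eta_\e$, which is the coupling between optic reorientation and elastic compatibility emphasised in the introduction. Once this coupled construction is in place, sending $\e \to 0$ yields the claimed $\limsup$ bound.
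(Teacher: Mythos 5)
Your high-level architecture (density reduction to piecewise constant $\xoverline Q$, local cell construction, mollification, energy book-keeping) mirrors the paper's, but there are genuine errors and missing mechanisms in the crucial Step two.

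First, your claimed obstruction is false. You write that ``two generic tensors in $\Qu$ do not differ by a symmetric rank-one tensor''; in fact any two uniaxial tensors $Q_i=\vect n_i\otimes\vect n_i-\tfrac13 I$ satisfy $Q_1-Q_2=\vect n_1\otimes\vect n_1-\vect n_2\otimes\vect n_2=(\vect n_1+\vect n_2)\odot(\vect n_1-\vect n_2)$, i.e.\ they are \emph{always} symmetric-rank-one compatible (equivalently, the middle eigenvalue of $Q_1-Q_2$ is zero). The paper does not use iterated laminates at all: Proposition~\ref{prop:recseq} provides an explicit single-scale, four-matrix crossed microstructure (rhombic ``cross-hatch'' cells $\mathcal F_j^k$, with $N(\eta)=\mathcal O(\eta^{-2})$) built from non-symmetric gradients $G_j$ that are Hadamard-compatible and whose symmetric parts $Q_j$ are uniaxial with $Q=\tfrac14\sum_j Q_j$. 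A generic convex decomposition of $Q_i\in\Qbb$ over $\Qu$ followed by a nested laminate is both harder to control (the interfacial area and the curvature cost depend sensitively on nesting depth) and unsupported by the incorrect compatibility claim you offer as motivation.

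Second, you gloss over two essential technical devices. (a) Direct mollification of $Q^\e$ across interfaces does \emph{not} preserve membership in the nonconvex set $\Qu$; the paper mollifies the lifted director field $\vect n^\eta:\Ob\to S^2$, composes with the stereographic projection $\pi_z$ and its inverse to stay on the sphere, and only then reconstructs $Q^{\eta,\delta}=\vect n^{\eta,\delta}\otimes\vect n^{\eta,\delta}-\tfrac13 I\in H^1(\Ob,\Qu)$. Without something of this kind your step ``mollify \ldots in order to land in $H^1(\Ob,\Qu)$'' is an assertion, not a construction. (b) You assert that one can build a displacement whose rescaled strain $\kebl{\cdot}$ takes prescribed values on sub-layers, but the operator $\kebl{\cdot}$ is strongly anisotropic (different powers of $\e$ on in-plane, shear and normal components), so ordinary 3D rank-one compatibility of the $N_k^i$ does not directly give compatibility for the rescaled strain. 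The paper resolves this through the anisotropic change of variables $\phi^\e\circ\vect v=(v_\alpha(x'/\e,x_3),\,\e^2 v_3(x'/\e,x_3))$, which exactly cancels the anisotropy: $\kebl{\phi^\e\circ\vect f^\eta}=\tfrac12(\nabla\vect f^\eta+(\nabla\vect f^\eta)^T)=Q^\eta$. Omitting $\phi^\e$ leaves a gap between the 3D microstructure you import and the functional you must bound.

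With the compatibility claim corrected, the nested-laminate route is not obviously doomed (indeed lamination is \emph{easier} than you assert), but as written the proposal does not produce an admissible recovery pair: the $Q$-sequence is not shown to be uniaxial after smoothing, and the displacement sequence is not shown to hit the rescaled-strain targets. Both are supplied in the paper by stereographic mollification and the $\phi^\e$ rescaling respectively, together with the explicit four-matrix microstructure of the Appendix in place of generic iterated laminates.
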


\begin{proof}

The strategy consists in constructing recovery sequences in the film and nematic layers,  glued to yield an admissible recovery pair $({\vect {\hat v}^\e}, \widehat Q^\e)$ for the two-variable energy functional $I_{U,\e}(\vect u, Q)$, such that ${\vect {\hat v}^\e}\to \vect u = ((1\wedge x_3+1)\xoverline{\vect u}, 0)$ strongly in $L^2(\O,\R^3)$ and $\widehat Q^\e\wto \xoverline Q$ weakly in $L^2(\Ob,\Qbb)$.
We operate in the film and nematic layer separately. In the former, the sequence is identical to the one constructed in the proof of the limsup inequality for the biaxial model in Lemma~\ref{lem:upperboundbi}.
The main peculiarity here is the construction of a sequence of displacements and uniaxial tensors which are optimal for the mechanical energy and relax the nematic manifold at the same time.
The (diagonal restriction to $\Ob$ of the) sequence of displacements and uniaxial tensors are constructed 
such that
\renewcommand{\Qrs}{\widehat Q^\e}
\renewcommand{\urs}{\vect{\hat v}^\e}
\begin{multline}
    \limsup_{\e\to 0}\frac{1}{2}\int_{\Ob}
    \left( \e^2\delta_\e^2|\nabla'\Qrs|^2+\delta_\e^2|\partial_3 \Qrs|^2+|\kbl{\urs}-\Qrs|^2+\frac{\lab}{2\muf}\tr^2\kbl{\urs} \right)  dx
    \\
    \leq 
\frac{1}{2}\int_{\o}
\left( |\xoverline Q_{\alpha \beta}|^2+2\tn{\frac{\overline{u}_\alpha}{2}
-\xoverline Q_{\alpha 3}}^2+\frac{\lab}{\lab+2\mub}(\xoverline Q_{33})^2 \right) 
dx'.
\end{multline}
Note that
the upper trace of the recovery sequence in the nematic bonding layer has to coincide with the lower trace of the recovery sequence in the film which ultimately parametrises the deformation of the system.

\medskip

If $\vect{u}$ is not in the form $\vect u= (\xoverline{\vect u},0)$ with $\xoverline{\vect u}\in H^1(\o,\R^2), \xoverline{\vect u}(\cdot, -1)=0$ a.e. $x'\in \o$, and  if $\xoverline{Q}$ is not in $L^2(\Ob,\Qbb)$ with $\overline{Q}$ constant along the third direction, then the claim is trivial.
We assume the converse in what follows.

Let us construct the recovery sequence, starting with the film layer where the computation is analogous to the biaxial case, indeed 
\renewcommand{\urs}{\vect v^\e}
for every $\vect u=(\xoverline {\vect u},0 ), \xoverline {\vect u}\in H^1(\o, \R^2)$ and $\xoverline{\vect u}(\cdot, -1)=0$ a.e. $x'\in \o$, introducing $\hat h^\e, h^\e$ as in~\eqref{eqn:auxconverglimsup}, and define
\begin{equation}
\label{eqn:recseqfilm}
\urs(x', x_3)=(\overline u_{\alpha}(x'), {\e^2 (x_3\hat h^\e(x') + h^\e(x')}).
\end{equation}
We readily obtain
\begin{equation}
    \label{eqn:limsupunifilm}
    \begin{aligned}
&\limsup_{\e\to 0}\frac{1}{2}\int_\Of \left(\tn{\ke{\vect v^\e}}^2 + \frac{\laf}{2\muf} \tr^2(\ke{\vect v^\e})   \right) dx\\
&\qquad=\limsup_{\e\to 0} \frac{1}{2} \int_\Of \left\{ \left( h_\e^2 + \frac{1}{2} \e^2|x_3\nabla' \hat h^\e + \nabla' h^\e|^2 + |\eab(\xoverline{\vect u})|^2\right) + \frac{\laf}{2\muf} \left(\hat h^\e + \eaa(\xoverline{\vect u}) \right)^2 \right\} dx \\
&\qquad= \frac{1}{2} \int_\o \left(  |\eab(\xoverline{\vect u})|^2 + \frac{ \laf}{\laf + 2\muf} (\eaa(\xoverline{\vect u}))^2 \right)  dx.
\end{aligned}
\end{equation}

The construction in the nematic layer is more involved because it requires to consider the interaction between elastic and nematic microstructure.
We break down this argument in successive steps:
We first solve a local problem on an open set $A_i$ whose structure is dictated by the strong anisotropy given by the separation of scales, that is the $A_i$'s take the form of the Cartesian product of an arbitrary open and bounded subset $\o_i\subseteq\o$ by the open interval $(-1, 0)$ (Step 1). 
In each of the $A_i$'s we first relax a constant and biaxial tensor $Q$.
This is a fully three dimensional problem (see, for a sketch, Figure~\ref{fig:dimension_reduction}) where the relevant differential operator for the elastic problem is the rescaled strain which, effectively enforcing a strong anisotropy, selects weakly converging sequences of kinematically compatible $Q$-tensors.
Then (Step 2), we extend the construction to the entire nematic layer and, therein, we relax a piecewise constant $Q$ on the partition given by the $A_i$'s, taking care of gluing local 
constructions.
The construction obtained up to this point, consisting in a recovery sequence of $L^2$-piecewise constant order tensors,  has to be mollified to satisfy the uniaxiality constraint whilst ensuring control of its curvature energy (Step 3).
Finally (Step 4), we exploit the continuity properties of the functional to approximate, by density of the space of piecewise-continuous biaxial $Q$-tensors in $L^2$, any biaxial $Q$ in the limit space $L^2(\o,\Qbb)$.

\begin{figure}[tb]
    \centering
    \includegraphics[width=.9\textwidth]{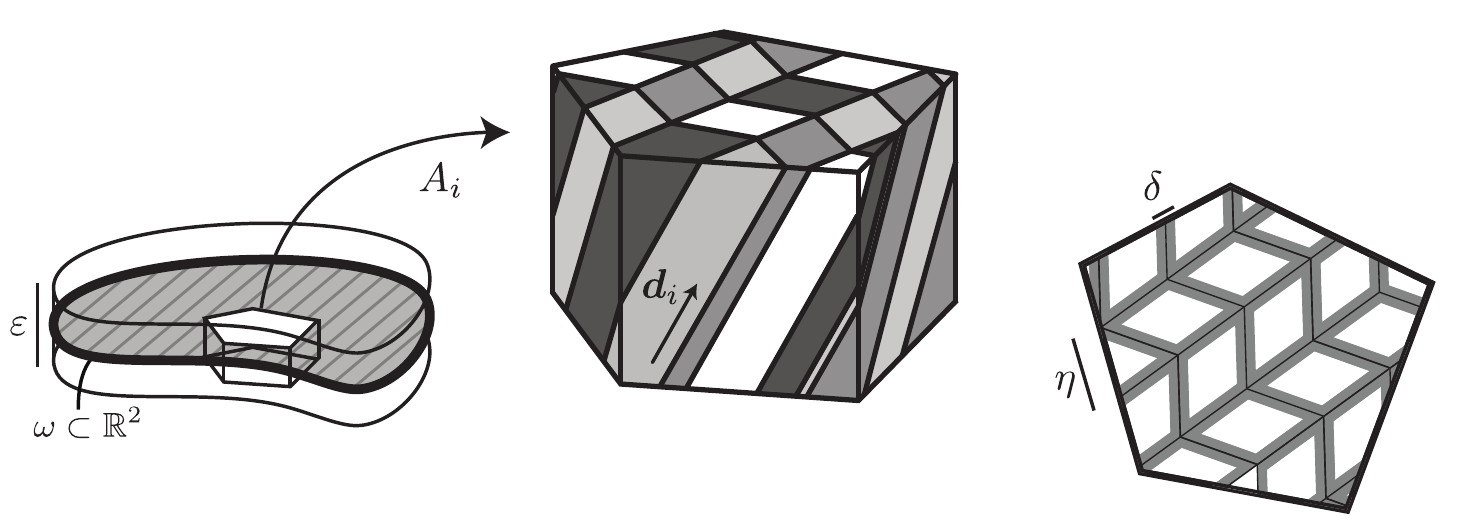}
    \caption{A sketch of the construction of the microstructure. On the left, the three dimensional domain, in the centre, a generic grain $A_i$ of thickness $\e$ with its optical microstructure $\Qrs$. Note that, in general and although the limit $Q$ is constant with respect to the thickness, the distinguished direction of the sequence of microstructures is not orthogonal to $\o$. On the right, a cross section of the microstructure, orthogonal to the distinguished direction $\vect d_i$ showing  microstructure size $\eta$ and boundary layer $\delta$.}
    \label{fig:dimension_reduction}
\end{figure}

\newcommand{\targetQ}{\xoverline Q}
\paragraph{Step 1. $\boldsymbol{\xoverline{Q}}$ constant (and biaxial)}
We localise our construction on some domain $\R^3\supset A_i:=\o_i\times(-1,0)$,  targeting a displacement $\vect u=((x_3+1)\xoverline {\vect u},0), \xoverline {\vect u}\in  H^1(\o_i, \R^2), \xoverline{\vect u}(\cdot, -1)=0$ a.e. $x'\in \o$ and an optic tensor $\targetQ\in \Qbb$, constant  and biaxial in $A_i$.
In this step, we drop the index $i$ from all sequences.
We exhibit a recovery pair 
$\urs \in \mathcal{V}$
and 
$\Qrs\in L^2(\Ob,\Qu)$, with $\urs\to ((x_3+1)\xoverline{\vect u},0)$ strongly in $L^2(\Ob, \R^3)$ and $\Qrs\wto \xoverline Q$ weakly in $L^2(\Ob,\R^{3\times 3})$ as follows.
We construct an oscillating optical microstructure with characteristic length scale $\eta$ and the associated accommodating displacements, denoted by $Q^\eta$ and  $\vect f^{\e,\eta}$, respectively.
Recall that the nematic stiffness imparts the smallest length scale to the system and bounds from below the size of the emergent microstructure which, in turn, is bounded from above by the thickness of the layer.
We hence have the order relation
\newcommand{\dir}{\vect n^{\eta}}
\renewcommand{\Qrs}{Q^\eta}
\[
    \delta_\e \ll \eta \ll \e.
\]
The three-dimensional construction  heavily draws from  \cite{cesana2011nematic}, it is reported in the Appendix and summarised as follows.
We apply Proposition~\ref{prop:recseq} (cf. {Appendix}) yielding a collection of open sets 
$\mathcal F_j^k$
such that $A_i=\bigcup_{k=1}^{N(\eta)}\mathcal F_j^k$ up to a set of measure zero (see Figure~\ref{fig:microstructure}). Further, there exist four matrices $Q_j\in \Qu$
and a sequence $Q^\eta\wto \xoverline Q\in L^2(A_i,\R^{3\times3})$ as $\eta\to 0$ such that
\begin{equation}
    Q^\eta(x)\bigr|_{\mathcal F_j^k}=Q_j,\quad j\in\{1,\dots,4\}.  
\end{equation}
The cardinality of the collection $N(\eta)$ is (at most) $\mathcal O(\eta^{-2})$ when the texture is columnar, otherwise $N(\eta)=\mathcal O(\eta^{-1})$ when the texture is striped.

Again, by Proposition~\ref{prop:recseq}, there exists $(\vect f^\eta)\subset W^{1, \infty}(A_i, \R^3)$
 such that $\vect f^\eta\to \xoverline Q.x$ uniformly on $\xoverline A_i$ as $\eta \to 0$ and $\nabla \vect f^\eta\wto Q$ in $L^2(A_i, \R^{3\times3})$.
The sequences $\vect f^\eta$ and $Q^\eta$ are crucially related by  $\frac{1}{2} \left( \nabla \vect f^\eta + (\nabla \vect f^\eta)^T \right)=Q^\eta \in\Qu \text{ a.e. in } A_i$.

Since $\Qrs$ is a piecewise constant uniaxial tensor field it can be lifted in $A_i$ in the sense that on each set $\mathcal F_j^k$ where the map $\Qrs$ is constant and equal to $Q_j$, by applying the spectral theorem there exists a unit vector $\vect n^{\eta}_j$ such that
\[
    \Qu \ni \Qrs_j=\vect n^{\eta}_j\otimes \vect n^{\eta}_j-\frac{1}{3} \id.
\]
We can therefore define the unit vector field $\vect n^\eta: A_i\mapsto S^2$ as
\begin{equation}
    \label{eqn:lifting}
  \vect n^{\eta}(x)= {\vect n^{\eta}_j}\text{ on } \mathcal{F}^{\eta}_j 
  \text{ for all }\eta.
\end{equation}

We now turn to the construction of the elastic microstructure.
The mechanical microstructure is conceived in such a way that $\kebl{\vect f^{\e,\eta}}=Q^{\eta}$
, hence cancellations (or elastic accommodation of optic texture) take place. 
\newcommand{\fke}{  \vect f^{ \e,\eta}}
Now, let us define $\vect f^{\e,\eta}$ as
\begin{eqnarray}\label{1711021740}
  \fke(x):=\phi^\e\circ \vect  f^\eta =
\begin{system}
 \eta f_\alpha(x'/(\eta\e), x_3/\eta)\\
 \eta {\e^2}  f_3(x'/(\eta\e), x_3/\eta )
\end{system} 
\end{eqnarray}
where we have introduced the anisotropic rescaling   via the map $ \phi^\e$ acting on the vector $\vect v$ by 
 modifying the spatial frequency in the plane and rescaling the vertical component 
 as follows
 \[
    \phi^\e \circ  \vect v = (v_\alpha(x'/\e, x_3), \e^2 v_3(x'/\e, x_3)).
\]
The mapping (\ref{1711021740}) exposes the coupling between the optical and the mechanical microstructures, characterised by their respective length scales $\eta$ and $\e$. Note that 
$\vect f_n$ defined in Proposition~\ref{prop:recseq} is equivalent to $\fke$ for $\e=1, \eta=1/n$.
By Proposition~\ref{prop:recseq}.(iii), the sequence $\vect f^{\eta}$ is such that $\nltwo[\vect f^\eta - \xoverline Q.x]{L^{2}}\leq C\eta$.
Then we have $\phi^\e \circ \vect f^\eta \to \phi^\e \circ(\targetQ.x) $ uniformly,  as $\eta\to 0$ for fixed $\e$ over $\xoverline A_i$.
Since we are interested in the limit as $\e\to 0$, we need to specify the relation between the two parameters $\e,\eta$. Owing to the order relation between length scales introduced above, we let
\begin{eqnarray}\label{eqn:etavseps}
\frac{\eta}{\e}\to 0
\end{eqnarray}
{}for $\eta\to 0$, as $\e\to 0$. In other words, we specify that the scale at which the optical microstructure occurs (measured by $\eta$) is finer than the thickness, this in turn allows for a genuinely three-dimensional mirostructure. Consequently, as both $\e,\eta$ vanish and (\ref{eqn:etavseps}) holds, it is possible to extract a diagonal sequence $\eta_\e:=\eta(\e)$ so that $\vect f^{\e,\eta} - \phi^\e \circ(\xoverline Q.x)$  uniformly converges to zero.

Finally, we define the sequence of vector maps

\renewcommand{\urs}{\vect v^{\e,\eta}}
\[
     \urs  = (x_3+1)\left(  \xoverline{\vect u} + {\e^2}  h^\e \vect  e_3 \right)  +\theta(x) \phi^\e \circ (  \vect  f^\eta - \targetQ .  x)
\]
or, with compact notation,
\[
     \vect v^{\e,\eta} =  \vect v^{\e} +\theta  \vect w^{\e,\eta},
\]
identifying
$ \vect v^{\e}:= (x_3+1)\left(  \xoverline{\vect u} + {\e^2}  h^\e \vect  e_3 \right)$
and
$\vect w^{\e,\eta}:= \phi^\e \circ (  \vect  f^\eta - \targetQ .  x)$.
Here we require
  $C^{\infty}_c({\o_i})\ni h^{\e}(x')\to \frac{2\mub}{\lab+2\mub} \xoverline Q_{33}\in L^2({\o_i})$ to satisfy optimality analogously to the biaxial case.
   The sequence above converges, for $\e,\eta\to0$ with $\eta/\e\to0$,  to $((x_3+1)\vect{\xoverline u},0) \in \mathcal V$ strongly in $L^2(\Ob,\R^3)$.
\newcommand{\Krho}{K^\rho_i}
In the expression above,
$\theta\in C^{\infty}_c(A_i)$ is a smooth scalar cut-off function which is required in order to
recover, for every $\eta$, homogeneous displacements at the boundary of $A_i$. It is identically equal to $1$ on $\Krho$ and $0\leq \theta(x)\leq 1$ on $A_i\setminus \Krho$, where $\Krho$ is the fixed compact set $\Krho:=\{x\in A_i:\operatorname{dist}(x,\partial A_i)\geq\rho\}$.
Note that $|\nabla\theta|\leq 1/\rho$ and 
when $\rho$ is small enough so that $K$ is actually non-empty, we have $
\operatorname{meas}(A_i\setminus \Krho)=\mathcal O(\rho)$.

The recovery sequence for the displacement field  is therefore the sum of two terms. The first, $ \vect v^{\e}$, recovers in the limit the optimal profile of the pure shear deformation of the nematic bonding layer.
Indeed, its planar components are fixed with respect to $\e$ and produce the affine deformation $(1+x_3)\xoverline u_\alpha$, matching the boundary condition on $\o\times \{-1\}$ and the upper trace of displacement of the film on $\o\times \{0\}$, hence ensuring continuity.
The out-of-plane component depends upon $\e$ via a smooth function which, in the limit, optimally accommodates the coupling between in-plane and out-of-plane deformations, in analogy to the film layer.
The second term, $ \rsms$, couples the elastic deformation with the nematic reorientation via formation of microstructure, at scales identified by  $\e$ and $\eta$.

\paragraph{Step 2. $\boldsymbol{\xoverline{Q}}$ piecewise constant (and biaxial).}
Here we generalize the construction of Step 1 to a biaxial tensor field $\xoverline Q(x)$ that is  piecewise constant in $\o$ and constant along $x_3$.
In general, there exists a partition of $\o$ consisting of a finite number $m$ of open and pairwise disjoint sets $\o_i$ such that
\[
\o=
\bigcup_{i=1}^m \o_i\cup N
\]
where the set $N\subset\R^2$ has measure zero.
Let 
 $Q_i:=\targetQ (x)|_{A_i}$. 
We now reintroduce the index $i$ referring to the generic set $A_i=\o_i\times (-1, 0)$,
 namely
\[
Q^{\eta}_i,\wto  Q_i \text{ weakly in } L^2(A_i,\R^{3\times 3}) \text{ as }\eta \to 0, \text{ and } \vect v^{\e,\eta}_i \to ((x_3+1)\overline{\vect u}, 0) \text{ uniformly in } \overline{A_i},
\]
where the latter convergence is intended for $\e\to 0$ provided~\eqref{eqn:etavseps} holds.
 Let us now extend the sequences to the entire domain inheriting the convergence properties displayed above. Define $Q^{\eta}:\Ob\mapsto \Qu$ and $\vect v^{\e,\eta}:\Ob\mapsto \R^3$ as follows
\[
Q^{\eta}(x):=Q^{\eta}_i \text{ on }A_i \text{ and }
   \vect v^{\e,\eta}(x):=  \vect v_i^{\e,\eta}(x),   \text{ on }A_i,
\]
so that $Q^{\eta}\wto \targetQ $ weakly in $L^2(\O_b,\R^{3\times 3})$. 
Note that it still follows that $\vect v^{\e,\eta}\in \mathcal V$
 for all $\e,\eta$, and the restriction $\vect v^{\e,\eta}(\cdot, 0)$ coincides with $\xoverline {\vect u}$.
Furthermore, $\vect v^{\e,\eta}$ converges strongly in $L^2(\Ob,\R^3)$ (indeed uniformly)
 to $((x_3+1)\xoverline{ \vect u}, 0)$ as $\eta, \e\to 0$ with $\eta/\e\to0$.

\paragraph{Step 3. Mollification of the sequence $\boldsymbol{\xoverline Q}^\eta$.}
Let us add the index $i$ to the sets defined in Step 1 
$\mathcal F_j^{k}(i)\subset A_i$
 to refer to the generic grain $A_i$.
Within each grain $A_i$
we regularise the sequence $Q^{\eta}(x)$ across all the interfaces 
between the
$\mathcal{F}_{j}^{k}(i)$'s
ensuring that the regularised sequence still ranges in  $\Qu$. This will guarantee control of the Dirichlet energy associated to the gradient of the tensor $Q$ and, at the same time, allows to satisfy the uniaxial constraint.
First, define compact sets with a very small `comfort zone' of size $\delta\ll \eta$ that approximate $\mathcal{F}^{k}_{j}(i)$ as $\delta\to 0$ as follows
\[
    \mathcal{F}^{k,\delta}_{j}(i):=\left\{ x\in \mathcal{F}^{k}_{j}(i):\operatorname{dist}
    \left( x, (\mathcal{F}^{k}_{j}(i))^c \right) \geq\delta,  j\in \{1, \dots, 4 \} \right\}.
    \]
Note that there exists $\delta$ small enough such that these sets are actually non-empty and they constitute a collection with the same cardinality of $\mathcal{F}_{j}^{k}(i)$ over each grain $A_i$.
\newcommand{\moll}{\psi_i^{\eta,\delta}}

Then, introduce the standard mollifiers $\moll$ 
which are identically equal to 1 over the sets $\mathcal{F}^{k,\delta}_{j}(i)$ and belong to $C^{\infty}_c(\mathcal{F}^{k}_{j}(i))$, for $j=1,\dots, 4$, to regularise our sequence of uniaxial tensors $\Qrs$.
Mechanical intuition suggests that the thickness of the boundary layer for $Q$ is dictated by the curvature energy which introduces the smallest length scale.
Finally, we convolute the `lifted' map $\dir$ (see Eq.~\eqref{eqn:lifting}) with $\moll$ yielding a smooth vector field which, in general, does not satisfy the requirement of being a unit vector field. In order to render the construction admissible, as already done e.g. in \cite{de-simone1993energy} and \cite{cesana2015effective}, we introduce the standard stereographic projection with pole $z$
\[
    \pi_z: S^2\mapsto\R^2.
\]
Let us define the (piecewise constant) map
\[  
 \vect q=
\begin{cases}
    \vect n^{\eta}, &\text{ on }
\bigcup_{i,j,k}\mathcal{F}^{k,\delta}_{j}(i) \\
 \vect n, &\text{ otherwise in } \R^3
\end{cases}
\]
for some $\vect n\in S^2$. 
Since $Q^{\eta}(x)$ has finite range we can use the following composition
\renewcommand{\dir}{{\vect n}^{\eta,\delta}}
\[
    {\vect n}^{\eta,\delta}:=\pi^{-1}_{s_i}\circ \left( \moll*(\pi_{s_i}\circ   \vect q) \right) \in C^{\infty}(\R^3,S^2),
\]
which allows us to reconstruct the uniaxial smooth tensor field on $\R^3$
\newcommand{\Qrssmooth}{{Q}^{\eta,\delta}}
\[
    \Qrssmooth(x):={\vect n}^{\eta,\delta}\otimes {\vect n}^{\eta,\delta}(x)-\frac{1}{3}\id
\]
further restricted to $A_i$. We now need to estimate the curvature energy associated to the gradient of $\Qrssmooth(x)$.
First of all, note that
$|\Qrssmooth|^2=|\nabla {\vect n}^{\eta,\delta}\otimes  {\vect n}^{\eta,\delta}+ {\vect n}^{\eta,\delta}\otimes \nabla {\vect n}^{\eta,\delta}|^2\leq C|\nabla  {\vect n}^{\eta,\delta} |^2$, for some constant $C$.
By repeatedly applying the chain rule we obtain (componentwise, here $l=1,2,3$)
\[
\partial_l {\vect n}^{\eta,\delta}=
\partial_l\left\{ \pi^{-1}_{s_i}\circ
\left( \moll*(\pi_{s_i}\circ  \vect q) \right)  \right\}=(\partial_l
\pi^{-1}_{s_i})\circ
\left( \moll*(\pi_{s_i}\circ   \vect q) \right) 
\cdot
\partial_l 
\left( \moll*(\pi_{s_i}\circ   \vect q) \right) \]
where
\[
    \partial_l \left( \moll*(\pi_{s_i}\circ   \vect q) \right) =
\partial_l  \moll* \left( \pi_{s_i}\circ   \vect q \right) 
\]
and $|\partial_l  \moll|\leq C\delta^{-1}$ over small transition layers of measure $\eta\delta$.
Therefore, since the stereographic projection is smooth, we obtain the estimate: 
$|\nabla \Qrssmooth|^2\leq C_i{\delta^{-2}}$, where the constant $C_i$ neither depends on $\delta$ nor or $\eta$ but only on $i$, that is 
on the grain and its optical microstructure. {This estimate holds, again, over small transition layers of measure $\eta\delta$.} 
As in~\cite{cesana2015effective} we have
$  \Qrssmooth=  {Q}_i^{\eta }$ over 
$\mathcal{F}^{k,\delta}_{j}(i)$ with $j\in \{1,\dots,4\}$, 
 and similarly for the gradient.

Let us compute the energy for a single grain $A_i$ starting by the curvature energy, splitting it into the bulk and boundary layer contribution,
we have,
\begin{multline}
\label{eqn:error1}
\frac{1}{2}\int_{A_i}\delta_\e^2 \left( \e^2|\nabla \Qrssmooth|^2+|\pt \Qrssmooth|^2 \right)  dx=\\
\delta_\e^2 \left( 
\frac{1}{2}\int_{{\bigcup_{j,k} \mathcal{F}_{j}^{k,\delta}(i) }}
\left( \e^2|\nabla' \Qrssmooth|^2 + |\pt \Qrssmooth|^2 \right)dx  + \frac{1}{2}\int_{ {A_i\setminus\bigcup_{j,k} \mathcal{F}_{j}^{k,\delta}(i)} }
\left( \e^2|\nabla' \Qrssmooth|^2 +|\pt \Qrssmooth|^2\right)dx \right) \\
\leq 0+C_i\delta_\e^2 \left( \e^2 + 1\right)\frac{1}{\delta^2}\times \eta\delta \times\frac{1}{\eta^2}.
\end{multline}
In the integral above, $j$ and $k$ range implicitly within $\{1, \dots, 4\}$ and $\{1, \dots, N(\eta)\}$.

\newcommand{\oscillrs}{{\vect{w}}^{\e,\eta}}
\newcommand{\argfoo}{{\vect{v}}^{\e}+\theta \vect{w}^{\e,\eta}}
\renewcommand{\ve}{\vect v^{\e}}

Now, we turn to the computation of the rescaled strains associated with the elastic recovery sequence, namely
$\kebl{\argfoo}$, before computing its energy.
The computation of the microstructure-free term $\kebl{\ve}$ is analogous to the biaxial case. 
Let us compute the scaled strains associated with oscillating terms, recalling that $\oscillrs= \fke - \phi^\e(\targetQ.x)$
\[
\begin{aligned}
    \kebl{\theta \oscillrs} &= 
 \begin{pmatrix}
        \e \nabla' \theta \odot \left( \oscillrs \right)' + \theta \kabbl{\oscillrs}& 
        \frac{1}{2} \left( {\e^{-1}}\pa \theta {w}^{\e,\eta}_3 + \pt \theta w^{\e, \eta}_\alpha\right) + \theta\ktabl{\oscillrs}    \\
        \frac{1}{2} \left( {\e^{-1}}\pa \theta {w}^{\e,\eta}_3 + \pt \theta w^{\e, \eta}_\alpha\right) + \theta\ktabl{\oscillrs}   & {\e^{-2}}\left( \pt \theta w^{\e, \eta}_3 + \theta \ett(\oscillrs) \right) \\
    \end{pmatrix}\\
    &=
     \begin{pmatrix}
        \e \nabla' \theta \odot \left( \oscillrs \right)' & 
        \frac{1}{2} \left( \e^{-1}\pa \theta w^{\e, \eta}_3 + \pt \theta w^{\e, \eta}_\alpha\right)   \\
        \frac{1}{2} \left( \e^{-1}\pa \theta w^{\e, \eta}_3 + \pt \theta w^{\e, \eta}_\alpha\right)  & \e^{-1}\left( \pt \theta {w}^{\e,\eta}_3 \right)
    \end{pmatrix}\\
    &\qquad+
\theta\underbrace{\begin{pmatrix}
        \kabbl{\oscillrs}&   \frac{1}{2}(\e^{-1}\pa  w^{\e, \eta}_3 +\pt  w^{\e, \eta}_\alpha)   \\
        \frac{1}{2}(\e^{-1}\pa  w^{\e, \eta}_3 +\pt  w^{\e, \eta}_\alpha) & \kttbl{\oscillrs}   \\
    \end{pmatrix}}_{=Q^\eta-\overline Q}.
\end{aligned}
\]
Note that $\vect w^{\eta,\e}$  depends on \emph{both} $\e$ and $\eta$ albeit the associated rescaled strain $\kebl{\oscillrs}$  depends \emph{only} on $\eta$. 
We now turn to the computation of the bulk energy of the single grain, decomposing $A_i=B_i \cup A_i\setminus B_i$ where $B_i$ reads
\[
  B_i:= K_i^\rho \cap \bigcup_{j, k} \left\{ \mathcal F_j^{k, \delta}(i), \quad j=1,\dots, 4,\quad k=1, \dots, N(\eta) \right\}.
\]
One may visualise (a section of) the three-dimensional domain $B_i$ as the white set in the leftmost cartoon in Figure~\ref{fig:dimension_reduction}.
We now turn to the computation of the bulk energy of the grain $A_i$
\begin{multline}
\label{eqn:energyestimateAi}
\frac{1}{2}\int_{A_i}
\left\{ \tn{\kebl{\urs}-\Qrssmooth }^2+\frac{\lab}{2\mub}(\tr \kebl{\urs})^2 \right\} dx=\\
\frac{1}{2}\int_{B_i}
\left\{ \tn{\kebl{\urs}-\Qrssmooth}^2+\frac{\lab}{2\mub}(\tr \kebl{\urs})^2  \right\}dx
 +
\frac{1}{2}\int_{A_i\setminus B_i}
\left\{  \tn{\kebl{\urs}-\Qrssmooth }^2+\frac{\lab}{2\mub}(\tr \kebl{\urs})^2  \right\}dx=
 \\
 =\frac{1}{2}\int_{B} \left\{ \tn{\kebl{\rsel} + \kebl{\fke}
- \targetQ - \Qrs}^2+
\frac{\lab}{2\mub}\left( h^\e+\e (x_3+1)\pa \ua(x', 0) \right) ^2  \right\}dx
\\+
\frac{1}{2}\int_{A_i\setminus B_i} \left\{ 
\tn{\kebl{\urs}-\Qrssmooth}^2+\frac{\lab}{2\mub}(\tr \kebl{\urs})^2 \right\} dx\\
 =\frac{1}{2}\int_{B_i} \left\{ \tn{\kebl{\rsel} + \Qrs
- \targetQ - \Qrs}^2+
\frac{\lab}{2\mub}\left( h^\e+\e (x_3+1)\pa \ua(x', 0) \right) ^2  \right\}dx
\\+
\frac{1}{2}\int_{A_i\setminus B_i} \left\{ 
\tn{\kebl{\urs}-\Qrssmooth}^2+\frac{\lab}{2\mub}(\tr \kebl{\urs})^2 \right\} dx.
\end{multline}
We now estimate the last integral in Equation~\eqref{eqn:energyestimateAi}. Remembering in what follows that $i$ is fixed, we have
\[
\frac{1}{2}\int_{A_i\setminus B_i} \left\{ 
\tn{\kebl{\urs}-\Qrssmooth}^2+\frac{\lab}{2\mub}(\tr \kebl{\urs})^2 \right\} dx
\leq
\frac{C_i}{2}\int_{A_i\setminus B_i} 
\left\{ \tn{\kebl{\urs}}^2 +
\tn{\Qrssmooth}^2 \right\} dx.
\]
Since the order tensor is always bounded, so is the second summand, namely
\[
\int_{A_i\setminus B_i} 
\tn{\Qrssmooth}^2 dx\leq C_i \left( \rho + \frac{\delta}{\eta} \right). 
\]
Instead, for the first summand there holds
\begin{multline}
\label{eqn:boundaryterms}
\int_{A_i\setminus B_i} 
\tn{\kebl{\urs}}^2 dx
\leq C_i\int_{A_i\setminus B_i}
\left\{ \tn{\kabe{\vect u}}^2
+ |\Qrs-\targetQ|^2  \right\}dx
 +\int_{A_i\setminus B_i}|h^{\e}_3|^2
dx\\
+\int_{A_i\setminus B_i}  
 \left(
\e^2|\nabla'\theta|^2 |\ors_\alpha|^2
+ {\e^{-1}}|\pa\theta|^2 |\ors_3|^2 
+ |\pt\theta|^2 |\ors_\alpha|^2 
+ |\pt\theta|^2 |{\e^{-2}}\ors_3|^2
\right) dx.
\end{multline}
The first two summands of the right hand side of~\eqref{eqn:boundaryterms} can be bounded as follows
\newcommand{\vu}{\vect u}
\begin{equation}
    \label{eqn:error2}
    \int_{A_i\setminus B_i}
\left\{ \tn{\kabe{\vect u}}^2
+ |\Qrs-\targetQ|^2  \right\}dx
 +\int_{A_i\setminus B_i}|h^{\e}_3|^2
dx
\leq
C_i\rho(\e^2+1+\norm[h^\e]{L^2(A_i\setminus B_i)}^2)\leq C'_i\rho.
\end{equation}
The line above holds because $|\kabe{\vect u}|^2$ is of order $\e^2$, $ |Q^{\eta}-\targetQ|^2$ is always uniformly bounded, and $h^\e$ is converging in $L^2$ (hence bounded therein).

To control the last contribution in~\eqref{eqn:boundaryterms}, {recall that $\oscillrs_3$ is of order $\eta\e^2$} and $\nabla \theta \equiv 0$ on $K_i^\rho$, implying the following  estimate (written in compact notation)
\begin{equation}
    \label{eqn:error3}
        \int_{A_i\setminus B_i} |\nabla\theta|^2|\oscillrs|^2dx\leq C_i \frac{\eta^2}{\rho},
\end{equation}
{the line above holds} because $|\pa \theta|^2$ and $|\pt \theta|^2$ are lesser or equal to $|\nabla\theta|^2\leq {\rho^{-2}}$
and $|\oscillrs|^2$, that represents the wrinkling texture for nematic relaxation,
is uniformly bounded by $\eta^2$. When $\e\to 0$ (and hence $\eta\to 0$), we shall be able to extract a subsequence guaranteeing control of this remainder term.

\renewcommand{\Qrs}{Q^{\eta}}

\paragraph{Step 4. Any $\boldsymbol{\xoverline{Q}\in L^2(\o,\Qbb)}$.}
We now reconstruct the domain $\Ob$. Remember, $\xoverline Q$ is a piecewise constant biaxial tensor defined over $\Ob$ invariant with respect to $x_3$ and $\vect u=(\xoverline{\vect u}, 0)\in \mathcal{V}$.
Let us define $Q^{\eta,\delta}\in H^1(\Ob, \Qu)$ and $\vect v^{\eta,\e}\in \mathcal V$ as follows
\begin{equation}
\begin{aligned}
\label{eqn:recseqnema}
       Q^{\eta,\delta}(x)&:=\Qrssmooth_i \text{ on }A_i, \text{ and}\\
     \vect v^{\eta,\e}(x)&:=  \vect v^{\eta,\e}_i(x)  \text{ on }A_i.
\end{aligned}
\end{equation}
Their convergence properties are inherited verbatim from those in the single grain, namely $Q^{\eta,\delta}\wto \xoverline Q\in L^2(\Ob, \Qbb)$ weakly in $L^2(\Ob, \R^{3\times 3})$ as $\eta\to 0, \delta/\eta\to 0$, and $\vect v^{\eta,\e}\to ((x_3+1)\xoverline {\vect u}, 0)$ uniformly in $\xoverline \Omega_b$ as $\e\to 0$ and $\eta/\e\to 0$.
Upon evaluation of the total energy of the nematic layer on the recovery sequence~\eqref{eqn:recseqnema} we get
\begin{multline}
\label{eqn:energyrecseqnema}
\frac{1}{2}\int_{\Ob} 
\left(
\e^2\delta_\e^2|\nabla' \Qrssmooth|^2+\delta_\e^2|\pt \Qrssmooth|^2+
\tn{\kebl{\urs}-\Qrssmooth}^2+\frac{\laf}{2\mub}\tr^2 \kebl{\urs} 
\right)
dx
\leq\\
\sum_i^m \frac{1}{2}\int_{B_i}\left(
\tn{\kebl{ \vect v^{\e} } +\Qrs-\targetQ -\Qrs}^2
+\frac{\laf}{2\mub}(\tr \kebl{\urs}^2 \right)dx
\left.
+
\sum_i^m \frac{C_i}{m} 
\left( 
\rho\frac{\delta_\e^2}{\delta\eta}
+
\rho
+
\frac{\eta^2}{\rho}
+\frac{\delta}{\eta}
\right) 
\right\},
\end{multline}
where we have used the estimates of Step 3 for each of the $A_i$'s.
Also recall that $\Qrs \equiv \Qrssmooth$ and $\nabla \Qrs\equiv 0$ on $B_i$, for all $i=1,\dots, m$.
By expanding the integral on right hand side above we get
\begin{multline}
\sum_i^m \frac{1}{2}\int_{B_i}\left(
\tn{\kebl{ \vect v^{\e} } +\targetQ}^2
+\frac{\lab}{2\mub}\left( h^\e+\e (x_3+1)\pa \xoverline u_\alpha\right) ^2 \right)dx\\
=
\sum_i^m \frac{1}{2}\int_{B_i}\left(
\tn{\e(x_3+1)\eab(\xoverline {\vect u}) - \targetQ_{\alpha\beta}}^2
+2\tn{\frac{1}{2}(\overline{ u}_\alpha+\e{(x_3+1)} \pa h^\e)- \targetQ_{\alpha 3}}^2
\right.dx\\
+\left.
(h^\e- \targetQ_{33})^2
+\frac{\lab}{2\mub}\left( h^\e+\e (x_3+1)\pa \xoverline u_\alpha\right) ^2\right)dx.
\end{multline}
Equation~\eqref{eqn:energyrecseqnema} now reads
\begin{multline}
\frac{1}{2}\int_{\Ob} 
\left(
\e^2\delta_\e^2|\nabla' \Qrssmooth|^2+\delta_\e^2|\pt \Qrssmooth|^2+
\tn{\kebl{\urs}-\Qrssmooth}^2+\frac{\laf}{2\mub}\tr^2 \kebl{\urs} 
\right)
dx
\leq
\\
\frac{1}{2}\int_{\Ob}\left(
\tn{\e(x_3+1)\eab(\xoverline {\vect u}) - \targetQ_{\alpha\beta}}^2
+2\tn{\frac{1}{2}(\overline{ u}_\alpha+\e{(x_3+1)} \pa h^\e)- \targetQ_{\alpha 3}}^2
\right.dx\\
+\left.
(h^\e- \targetQ_{33})^2
+\frac{\lab}{2\mub}\left( h^\e+\e (x_3+1)\pa \overline u_{\alpha}\right) ^2\right)dx
+
C
\left( 
\rho\frac{\delta_\e^2}{\delta\eta}
+
\rho
+
\frac{\eta^2}{\rho}
+\frac{\delta}{\eta}
\right) 
\end{multline}
where the domain of integration on the right hand side above has been enlarged {exploiting the fact that} the integrand is non-negative.
\newcommand{\vdiag}{\vect v^\e}
\newcommand{\Qdiag}{Q^\e}

We now extract a diagonal sequence 
$\eta=\eta(\e)$  and $\delta=\delta(\e)=\delta_\e$ such that the order relation $\delta(\e)\ll \eta(\e)\ll\e$ holds.
Consequently, we write $(\vdiag, \Qdiag):=(\vect v^{\e, \eta(\e)}, Q^{\eta(\e),\delta(\e)})$, and recalling the convergences established for $h^\e$ (see~\eqref{eqn:auxconverglimsup}), we finally compute the limit as $\e\to 0$ yielding
\begin{multline}\label{1711071441}
\limsup_{\e\to0}
\frac{1}{2}\int_{\Ob} 
\left(
\e^2\delta_\e^2|\nabla' \Qdiag|^2+\delta_\e^2|\pt \Qdiag|^2+
\tn{\kebl{\vdiag}-\Qdiag}^2+\frac{\laf}{2\mub}\tr^2 \kebl{\vdiag} 
\right)
dx\\
    \leq
        \frac{1}{2}\int_{\o}\left(  \tn{\targetQ_{\alpha\beta}}^2+
    2\tn{\frac{1}{2}\overline{ u}_\alpha- \targetQ_{\alpha 3}}^2
    + \frac{\lab}{\lab+2\mub}(\targetQ_{33})^2 \right)dx'.
\end{multline}
To establish the inequality above we have taken the limit as $\e\to 0$ which leaves a (small) remainder term depending on $\rho$. Since $\rho$ is arbitrary the estimate is, in fact, exact.
Note that the domain of integration is $\o\subset \R^2$ because the integrand is independent of $x_3$.

Now, let us glue the recovery sequences constructed in the two layers
\begin{equation}
    \hat {\vect v}^\e(x)=
    \begin{cases}
    \vect v^\e(x), &\text{ in }\Of \text{, see~\eqref{eqn:recseqfilm}}\\
    \vect v^{\eta(\e),\e}(x), &\text{ in }\Ob \text{, see~\eqref{eqn:recseqnema}}
    \end{cases}\quad\text{ and }  
    \quad
    \widehat Q^\e(x)=
    Q^{\eta(\e),\delta(\e)}(x), \text{ in }\Ob \text{, see~\eqref{eqn:recseqnema}}.
\end{equation}
The sequence $\hat {\vect v}^\e\in \mathcal V$ and  as $\e\to 0$ with $\eta/\e\to 0$ we have
\[
    \hat {\vect v}^\e(x)\to ((1\wedge x_3+1) \xoverline{\vect u}, 0) \text{ strongly in } L^2(\O, \R^3), \text{ and } \widehat Q^\e(x)\wto \xoverline Q\text{ weakly in } L^2(\Ob, \R^{3\times 3})
\]
where $\xoverline {\vect u}\in H^1(\o,\R^2)$ and $\xoverline Q\in L^2(\o,\Qbb)$ piecewise constant in $x'$ and constant in $x_3$. We then have from  \eqref{eqn:limsupunifilm} and  \eqref{1711071441}
\begin{multline}
\limsup_{\e\to 0}
I_{U,\varepsilon}(\hat {\vect v}^\e,    \widehat Q^\e)
\\
\leq
  \frac{1}{2} \int_\o \left(  |\eab(\xoverline{\vect u})|^2 + \frac{ \laf}{\laf + 2\muf} (\eaa(\xoverline{\vect u}))^2 \right)  dx'
  +
    \frac{1}{2}\int_{\o}\left(  \tn{\targetQ_{\alpha\beta}}^2+
    2\tn{\frac{1}{2}\xoverline{\vect u}- \targetQ_{\alpha 3}}^2
    + \frac{\lab}{\lab+2\mub}(\targetQ_{33})^2 \right)dx'.
\end{multline}
The inequality above holds at the level of the $\Gamma\hbox{-}\limsup$, in other words,
\begin{eqnarray}\label{1711071600}
\Gamma\hbox{-}  \limsup_{\e\to0}{I_{U,\e}}(\vect u, \xoverline Q)
    \leq
\frac{1}{2}\int_{\o}\left( 
 |\eab(\xoverline{\vect u})|^2 + \frac{ \laf}{\laf + 2\muf} (\eaa(\xoverline{\vect u}))^2+ \tn{\targetQ_{\alpha\beta}}^2+
    2\tn{\frac{1}{2}\xoverline{\vect u}- \targetQ_{\alpha 3}}^2
    + \frac{\lab}{\lab+2\mub}(\targetQ_{33})^2 \right)dx'.
\end{eqnarray}
In (\ref{1711071600}) the $\Gamma\hbox{-}$lim-sup is computed in the strong topology of $L^2(\Of,\R^3)$ for the vector $\vect u$ 
 and the weak topology of $L^2(\Ob,\R^{3\times 3})$  for $Q$. We remind that the weak topology is metrisable over closed and bounded balls in $L^2$ \cite[Chapter 8]{dal-maso1993an-introduction}, which is enough for the present case: indeed, 
sequences $(Q^{\e})$ range in $L^2(\Ob,\mathcal{Q}_B)$,

Remember Eq.~\eqref{1711071600} holds for any $\overline{Q}\in L^2(\o,\mathcal{Q}_B)$ and piecewise constant in $x'$. We need to generalise the inequality to any biaxial tensor in $L^2(\o,\Qbb)$. We do so by using the density of piecewise constant  biaxial tensors.
Indeed,   any $\overline{Q}\in L^2(\o,\mathcal{Q}_B)$  can be approximated in the strong topology by a sequence of piecewise constant  tensors in $L^2(\o,\mathcal{Q}_B)$ (it is enough to repeat the proof of~\cite[Proposition 3]{cesana2011nematic} over $\o\subset\R^2$). Since the right-hand side in (\ref{1711071600}) is continuous in the strong $L^2(\Ob,\R^{3\times 3})$-topology for the variable $Q$ and the $\Gamma\hbox{-}$lim-sup is lower semicontinuous in the weak $L^2(\Ob,\R^{3\times 3})$-topology  we have
that
(\ref{1711071600}) holds for any general $\overline{Q}\in L^2(\o,\R^3)$.

\end{proof}

\begin{proof}[Proof of Corollary~\ref{cor:gupperlim}]

Eq. (\ref{1711071600}) holds for any $\overline{Q}\in L^2(\o,\mathcal{Q}_B)$. By plugging $\xoverline Q^*$ (the unique minimiser
 of the right hand side of~\eqref{1711071600} for fixed $\xoverline{\vect u}\in H^1(\o,\R^2)$) 
we have

\begin{eqnarray}
\Gamma\hbox{-}  \limsup_{\e\to0} {I_{U,\e}}(\vect u, \xoverline Q^*)
    \leq
I_0(\xoverline{\vect u}, \xoverline Q^*)=\min_{\overline{Q}\in L^2(\o,\Qb)}I_0(\xoverline{\vect u},\overline{Q})=E_0(\overline{\vect u}).
\end{eqnarray}
Then, trivially we have
\begin{eqnarray}
\Gamma\hbox{-}  \limsup_{\e\to0}\left(\inf_{Q\in H^1(\Ob,\Qu)}{I_{U,\e}}(\vect u,  Q)\right)
    \leq\Gamma\hbox{-}  \limsup_{\e\to0}{{I_{U,\e}}(\vect u, \xoverline Q^*)}
\end{eqnarray}
where the topology is strong $L^2(\Of,\R^3)$, as required.

 \end{proof}

\paragraph{Proof of Theorem \ref{1710312258}} 
\label{par:proof_of_theorem_1710312258}

\begin{proof}
The $\Gamma$-convergence result is   a  direct consequence of Lemma~\ref{lem:gliminfuniax} and~\ref{cor:gupperlim}~\cite{dal-maso1993an-introduction}. 
Items $i)$ and $ii)$ follow because the sequence of functionals $E_{U,\varepsilon}$ is equi-coercive.
It is immediate to verify that $E_0$ admits a minimum in $H^1(\omega,\R^2)$ since it is coercive and and lower-semicontinuous in the weak $H^1$-topology by convexity.
\end{proof}

\begin{remark}

The $\Gamma\hbox{-}$convergence result is defined in the strong $L^2(\Omega_f,\R^3)$-topology of the displacement $\vect u$. Note the topology needs not be specified in the nematic layer: the kinematics as well as the optical nematic states in $\Ob$ are therefore dominated by the upper layer. 
\end{remark}

\section{Discussion} 
\label{sec:discussion}

We have studied the interaction between small scale microstructure and the macroscopic response of a composite NLCE/elastic membrane bilayer, a structure of interest for technological applications.
The existence of material small parameters singularly perturbs the three dimensional system leading to a rich competition of elastic and nematic phenomena at several different material length scales.
We have treated this scenario as a micro-macro problem based on a coupled optimisation at two separated, yet interacting, scales.
The length scales at which microstructure emerges, a product of energetic competition of fine scale oscillating energy minimisers, is small compared with the geometric scales of the structure ({this is } represented by the limit {scenario} $\eta/\e\to 0$).
According to this viewpoint, the structure attains its equilibrium minimising its total energy, while the nematic elastomer optimises its orientation, locally, subject to the elastic state determined at the macroscopic level.

Our result is two-fold.
On the one hand, as a consequence of dimension reduction, we establish an effective two-dimensional limit model \emph{i)} involving the displacement $\xoverline{\vect u}$ defined on the mid-plane which  accounts for the planar deformation of the film, and \emph{ii)} yields an effective `active foundation' term due to the interaction between shear deformations of the nematic layer, its optic texture and the deformation of the overlying film.
The asymptotic nematic energy density is representable via the (square of) a function involving material parameters $\lambda, \mu$ measuring the distance of a tensor $A(\xoverline{\vect u})$ representing shear deformations from the convex set $\Qbb$. 
Therefore the energy density can be computed exactly in the norm $\|\cdot\|_{\mu,\lambda}$ (defined in~\eqref{1711221146}).
Due to dimension reduction, every admissible order tensor is indeed a membrane tensor field
and furthermore, the optimal tensor  $\xoverline{Q}^*$  is determined by the membrane displacement.
On the other hand, we establish a relation between two different NLCE models.
Indeed, we have studied the NLCE in the context of the uniaxial (with fixed nematic order), and the biaxial (allowing variable order), theories.
Mathematically, the analysis of these two separate models built within the above frameworks leads to the same limit functional. However, from a physical point of view, despite the first (Frank) is the relevant modelling scenario for the bulk behaviour of a NLCE, the latter (De Gennes) emerges and is justified as a macroscopic, coarse-grained, limit. 
Indeed, while there is clear experimental evidence that uniaxial stretches may alter the orientation of the liquid crystal by aligning the molecules along the direction of maximum stretch, whether a macroscopic deformation may actually affect pointwise the order states of the liquid crystal is debated. Therefore, the direct coupling between mechanical strain and order tensor dictated by~\eqref{eqn:energydensity}-right may be accepted if $Q\in \Qu$, while it may seem too simplistic if $Q\in \Qbb$.
The asymptotic process establishes the relation between the two models in the sense that the former (Frank) asymptotically leads to the latter (De Gennes) due to relaxation and microstructure, justifying the biaxial tensor as a limit effective model.
The $\G$-convergence process determines peculiar characteristics of the emerging (and vanishing) microstructure, showing the richness of a genuinely three dimensional phenomenon due to the interaction between order states, mechanical strains, and geometric constraints.

A physical interpretation of the limit model hints that shear energy may be suppressed by nematic reorientation and production of fine-scale elastic features solicited by the overlying membrane, whereas, on the other hand, elastic deformation of the film can be triggered by multiphysical interaction with the liquid crystal.
The linearity of the model at hand is prone to model extension to account for thermal stresses as well as electric and magnetic fields which we have not considered explicitly.
Such extensions, the analysis of equilibrium solutions in simple yet representative cases, and the numerical computation in complex scenarios, are left to subsequent contributions.

\paragraph{Acknowledgments.}
PC is supported by JSPS Research Category   
Grant-in-Aid for Young Scientists (B) 16K21213. PC is grateful to the kind hospitality of ENSTA-Paris. PC holds an honorary appointment at La Trobe University and is a member of GNAMPA.
ALB wishes to thank La Trobe University for hospitality during a short visiting period.

\section{Appendix}\label{Appendix}

For the reader's convenience we report the definition of the recovery sequence of uniaxial tensors and displacements which is presented in  \cite[Theorem 1]{cesana2011nematic}. 
 In the following proposition, with some abuse of notation, we identify a constant biaxial tensor field $Q(x)\in L^2(U, \mathcal{Q}_B)$ with the matrix $Q\in \mathcal{Q}_B$ itself.

\begin{proposition}
\label{prop:recseq}
Given any biaxial matrix $Q\in \mathcal{Q}_B$ and any open and bounded set $U\subset\R^3$, 
the following holds true

\begin{itemize}

\item[i)] There exists a sequence of piecewise constant  tensors $(Q_{n})\subset L^{2}(U,\mathcal{Q}_U)$, $\forall n$, such that
\begin{eqnarray}\label{1710262330}
Q_{n}\rightharpoonup Q\quad \text{weakly}-L^2(U,\R^{3\times 3}) \textrm{ as } n \to\infty\nonumber.
\end{eqnarray}
\item[ii)] For every $n$ there exist four uniaxial matrices $Q_j\in\mathcal{Q}_U$, $j=1,\dots,4$
and a family of open sets $\mathcal{F}_j^{k}$ with $j=1,\dots, 4$ and $k=1,\dots, N(n)$ such that
$$
Q_n(x)\Bigr|_{\mathcal{F}^{k}_j}=Q_j \textrm{ a.e. in }U ,\quad j=1,\dots,4.
$$
and $U=\bigcup_{k}^{N(n)}\mathcal{F}_j^k$ up to a set of measure zero. Here $N(n)$ is the cardinality of the family of the sets $\mathcal{F}^k_j$.
\item[iii)] There exists a sequence
  $(\vect f_{n})\subset W^{1,\infty}(U,\R^3)$, $\forall n$ such that
$$
\vect f_{n}\to Q.x \textrm{ uniformly on } \overline{U}, \textrm{ as } n \to\infty,
$$
with $\|\vect f_n-Q.x\|_{L^{\infty}(U,\R^3)}\leq C/n$
and 
$$
\frac{\nabla \vect f_n+(\nabla \vect  f_n)^T}{2}(x)=Q_n(x) \textrm{ a.e. in } U,\,\,\forall n.
$$

\end{itemize}
\end{proposition}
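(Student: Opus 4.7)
My plan is to exploit the identity $\mathcal Q_B=\operatorname{co}(\mathcal Q_U)$ to represent $Q$ as a weighted average of four uniaxial tensors, and then realise this average through an explicit second-order laminate. As a preliminary reduction, I would diagonalise $Q$ via a spectral decomposition $Q=RDR^T$ and exploit the covariance of the construction under orthogonal changes of frame ($Q_n\mapsto R Q_n R^T$, $\vect f_n(x)\mapsto R\vect f_n(R^T x)$), so it suffices to treat the case where $Q=\operatorname{diag}(d_1,d_2,d_3)$ with $\tr Q=0$ and $-1/3\le d_1\le d_2\le d_3\le 2/3$. The degenerate case $Q\in\mathcal Q_U$ is handled with $N(n)=1$ and $\vect f_n(x)=Q.x$, so I may assume $Q$ is strictly biaxial.

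Next, I would select four uniaxial matrices $Q_j=\vect n_j\otimes\vect n_j-\tfrac{1}{3}I$ and convex weights $\theta_j\in(0,1)$ with $\sum_j\theta_j=1$, so that $\sum_j\theta_j Q_j=Q$. The directors $\vect n_j$ would be chosen in the eigenframe of $Q$ to enforce double rank-one compatibility: for some direction $\vect m$ (inner laminate)
\begin{equation*}
Q_1-Q_2=\vect a\odot\vect m,\qquad Q_3-Q_4=\vect a'\odot\vect m,
\end{equation*}
and for a second direction $\vect m'$ (outer laminate) $Q^{(12)}-Q^{(34)}=\vect b\odot\vect m'$, where $Q^{(12)}=(\theta_1 Q_1+\theta_2 Q_2)/(\theta_1+\theta_2)$ and analogously for $Q^{(34)}$. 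The algebraic identity $\vect n\otimes\vect n-\vect n'\otimes\vect n'=(\vect n+\vect n')\odot(\vect n-\vect n')$ makes symmetric rank-one differences of uniaxial tensors transparent; fixing the polar angles of the $\vect n_j$ within two eigenplanes of $Q$ through an explicit calculation would close this step.

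The displacement $\vect f_n$ would then be constructed by a standard iterated laminate tiling. On the unit cube $[0,1]^3$ I would assemble a Lipschitz piecewise-affine map $\vect g$ whose symmetric gradient takes values in $\{Q_1,\dots,Q_4\}$ on a partition (the outer laminate of inner laminates with proportions $\theta_j$), with $\operatorname{sym}\nabla\vect g$ having mean $Q$ and $\vect g-Q.x$ vanishing on $\partial[0,1]^3$. Rescaling by $1/n$ and periodically tiling $U$ yields $\vect f_n\in W^{1,\infty}(U,\R^3)$ with $\operatorname{sym}(\nabla\vect f_n)=Q_n$ piecewise constant on cells $\mathcal F_j^k$ (with $N(n)=\mathcal O(n^2)$ for striped outer laminate and $\mathcal O(n^3)$ for columnar). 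The estimate $\|\vect f_n-Q.x\|_{L^\infty}\le C/n$ follows because on each cell of size $1/n$ the difference $\vect f_n-Q.x$ is affine with vanishing boundary average, hence oscillation $\mathcal O(1/n)$. The weak convergence $Q_n\rightharpoonup Q$ in $L^2(U,\R^{3\times3})$ is then the Riemann--Lebesgue principle for periodic oscillations at vanishing scale, tested against $C^\infty_c$ functions.

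The principal obstacle will be the algebraic step: simultaneously prescribing the weighted average $Q$ and satisfying both rank-one compatibility conditions. This requires a case analysis on the spectrum of $Q$ — specifically, distinguishing the generic strictly-biaxial case from degeneracies where eigenvalues coincide (in which fewer laminate levels suffice) — and exhibiting explicit formulas for the $\vect n_j$ and $\theta_j$ in terms of $d_1,d_2,d_3$. Once this decomposition is secured, the PDE-level realisation of $Q_n$ as a symmetric gradient and the boundary patching to maintain global Lipschitz regularity are routine laminate techniques and reduce the claim to the explicit construction already carried out in~\cite{cesana2011nematic}, which the paper invokes here.
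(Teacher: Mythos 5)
Your outline shares the paper's starting point (diagonalise $Q$, write it as a convex combination of four uniaxial tensors, and realise that combination by a piecewise-affine Lipschitz map whose rescaled copies oscillate at scale $1/n$), and the observation that $\vect n\otimes\vect n-\vect n'\otimes\vect n'=(\vect n+\vect n')\odot(\vect n-\vect n')$ is the right first step. However, the realisation as you describe it would fail, for two related reasons that the paper's construction is engineered specifically to avoid.

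First, symmetric rank-one connections $\vect a\odot\vect m$ between the $Q_j$'s are necessary but not sufficient to build a Lipschitz piecewise-affine $\vect f_n$: the Hadamard jump condition requires the \emph{full} gradients of $\vect f_n$ to differ by genuine rank-one matrices $\vect a\otimes\vect m$ across every interface. You must therefore exhibit non-symmetric matrices $G_j$ with $\operatorname{sym}G_j=Q_j$ whose pairwise differences are rank-one; the skew parts encode the rotations that make the lifting possible, and their choice is not automatic. This is exactly what the explicit formulas~\eqref{1711062210} and~\eqref{1711062212} accomplish, and your sketch does not address it. Second, a ``standard iterated laminate tiling'' cannot deliver a map whose symmetric gradient takes values \emph{exactly} in $\{Q_1,\dots,Q_4\}$ on an open partition of a cube (let alone with affine boundary values): nesting a finer inner lamination inside an outer one unavoidably produces transition layers of positive measure where the gradient leaves $\{G_1,\dots,G_4\}$, so $Q_n$ would leave $\mathcal{Q}_U$ there and item~ii) would be violated. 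What actually works --- and what the paper constructs --- is a single-scale, exactly-tiling ``crossed'' or herringbone periodic pattern: a fundamental cell partitioned into four rhomboids $\mathcal F_1,\dots,\mathcal F_4$, each of volume fraction $1/4$, with all four $G_j$'s rank-one connected across every interface (both the slanted ones and the horizontal one at $x_3=0$). This forces the equal weights $\theta_j=1/4$ (rather than general $\theta_j$) and the specific slope $1/T$ with $T=\sqrt{(c+1/3)/(a+1/3)}$; the case split is $a\neq -1/3$ versus $a=-1/3$ (where $T\to\infty$ and the herringbone collapses to horizontal stripes), not the one you anticipate. Finally, deferring the hard algebraic/geometric step to ``the explicit construction already carried out in~\cite{cesana2011nematic}'' is circular, since that reference \emph{is} the result being restated in this proposition.
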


\paragraph{Construction of $(Q_n)$, $(\vect f_n)$, and $\mathcal{F}_j^k$.}
First of all, it is not restrictive to assume $Q$ is a diagonal matrix parametrised in the form
\begin{displaymath}
 Q = \left(
\begin{array}{ccc}
a & 0 & 0 \\
0 & b & 0 \\
0 & 0 & c
\end{array} \right),
\end{displaymath}
with 
$-\frac{1}{3}\leq a\leq b\leq c\leq \frac{2}{3}$, $a+b+c=0$. Indeed, if $Q$ is not diagonal it is enough to apply the spectral theorem and operate with the correspoding diagonal matrix.
The construction now follows two paths depending on the parameter $a$.
\\
\quad
\\
\noindent $\bullet$ Case $a\neq-1/3$.
Let
\begin{eqnarray}\label{defT}
T:=\sqrt{ \frac{c+1/3}{a+1/3} }
\end{eqnarray}
and define the sets (see also Figure
$\ref{fig:microstructure}$-left for a sketch of the construction)
\begin{equation}\label{1710262252}
\begin{aligned}
\mathcal{F}_1  &:=\Bigl\{(x_1,x_2,x_3)\in \R^3: -\frac{1}{T} x_1< x_3< -\frac{1}{T}(x_1-T),
0< x_3<  1, -T< x_1< T, -1< x_2< 1 \Bigr\},  \\
\mathcal{F}_2 &:=\mathcal{F}_1-(T,0,0), \\
\mathcal{F}_3 &:=\Bigl\{(x_1,x_2,x_3)\in \R^3: \frac{1}{T}(x_1-T)< x_3< \frac{1}{T}
x_1,-1< x_3< 0,
-T< x_1< T, -1< x_2< 1 \Bigr\}, \\
\mathcal{F}_4 &:=\mathcal{F}_3-(T,0,0).
\end{aligned}  
\end{equation}
Define now the following matrices  
\begin{eqnarray}\label{1711062210}
 G_1 =\left(
\begin{array}{ccc}
a & 0 & 2G_{a,c}  \\
-2G_{a,b} & b  & -2G_{b,c} \\
0 & 0 & c
\end{array} \right),
\quad  G_2 =\left(
\begin{array}{ccc}
a & 0 & 2G_{a,c}  \\
 2G_{a,b} & b  & 2G_{b,c} \\
0 & 0 & c
\end{array} \right),\nonumber\\
 G_3 =\left(
\begin{array}{ccc}
a & 0 & -2G_{a,c}  \\
-2G_{a,b} & b  & 2G_{b,c} \\
0 & 0 & c
\end{array} \right),
\quad G_4 =\left(
\begin{array}{ccc}
a & 0 & -2G_{a,c}  \\
2G_{a,b} & b   & -2G_{b,c} \\
0 & 0 & c
\end{array} \right),
\end{eqnarray}
where the constants $G_{a,b}$, $G_{a,c}$, $G_{b,c}$ are
defined as follows
\begin{eqnarray}\label{1711062143}
G_{a,b}=\sqrt{a+\frac{1}{3}}\sqrt{b+\frac{1}{3}},
\quad G_{a,c}=\sqrt{a+\frac{1}{3}}\sqrt{c+\frac{1}{3}},
\quad G_{b,c}=\sqrt{b+\frac{1}{3}}\sqrt{c+\frac{1}{3}}.
\end{eqnarray}

 \noindent $\bullet$ Case $a=-1/3$.
 We define
\begin{equation}\label{1710262248}
\begin{aligned}
\mathcal{F}_1&:=\bigl\{(x_1,x_2,x_3)\in \R^3 : -1< x_1,-1<x_2< 1,  0< x_3<1 \bigr\},  \\
\mathcal{F}_2&:=\bigl\{(x_1,x_2,x_3)\in \R^3: -1< x_1,-1<x_2< 1,  -1< x_3<0 \bigr\}, \\
\mathcal{F}_{3}&:=\bigl\{(x_1,x_2,x_3)\in \R^3: -1< x_1,-1<x_2< 1,  -2< x_3<-1 \bigr\}, \\
\mathcal{F}_{4}&:=\bigl\{(x_1,x_2,x_3)\in \R^3 : -1< x_1,-1<x_2< 1,  1< x_3<2 \bigr\}.
\end{aligned}
\end{equation}
Introduce the following matrices
\begin{eqnarray}\label{1711062212}
 G_1 =G_3=\left(
\begin{array}{ccc}
-\frac{1}{3} & 0 & 0  \\
0 & b  & -2G_{b,c} \\
0 & 0 & c
\end{array} \right),
\,\, G_2 =G_4=\left(
\begin{array}{ccc}
-\frac{1}{3} & 0 & 0  \\
0 & b  & 2G_{b,c} \\
0 & 0 & c
\end{array} \right),
\end{eqnarray}
where  $G_{b,c}=\sqrt{b+\frac{1}{3}}\sqrt{c+\frac{1}{3}}$.

Now what follows holds for both cases.
We define the tensor
field  
\begin{displaymath}
H(x) :=\left\{ \begin{array}{ccc}
  G_1   &\textrm{on } &\mathcal{F}_1 \\
  G_2  &\textrm{on } &\mathcal{F}_2  \\
    G_3   &\textrm{on } &\mathcal{F}_3 \\
   G_4   &\textrm{on } &\mathcal{F}_4 , \\
\end{array} \right.
\end{displaymath}
 where the four matrices $G_j$ and sets $\mathcal{F}_j$ are either the ones defined in Eqs. (\ref{1711062210}) and (\ref{1710262252}) (when $a\neq -1/3$) or Eqs. (\ref{1711062212}) and (\ref{1710262248}) (when $a=-1/3$) respectively.
By construction, the matrices $G_j$  are kinematically compatible
as the Hadamard jump conditions ~\cite{muller1998variational,dacorogna2007direct}  are verified across   the interfaces   separating them (see Fig. \ref{fig:microstructure}). 
We then define
$$
Q_j:=\frac{G_j+G_j^T}{2},\quad  j=1,\dots, 4.
$$
It follows that
\begin{eqnarray}\label{1710251803}
Q=\frac{1}{4}\sum_j^4 Q_j=\frac{1}{4}\sum_j^4 G_j,
\end{eqnarray}
with $Q_j\in\mathcal{Q}_U$ since
\begin{eqnarray}\label{1710251804}
\textrm{spectrum}\left(\frac{G_j+G_j^T}{2}\right)=\left \{-\frac{1}{3},-\frac{1}{3},\frac{2}{3}\right\},\quad j=1,\dots,4.
\end{eqnarray}
We define   the open set
$\mathcal{D}=\Bigl(\overline{\bigcup_j^4 \mathcal{F}_j}\Bigr)^{\circ}$ (i.e., the interior of the closure of the union of the sets $\mathcal F_j$).

\begin{figure}[h!]
\centering%
\includegraphics[width=.8\textwidth]{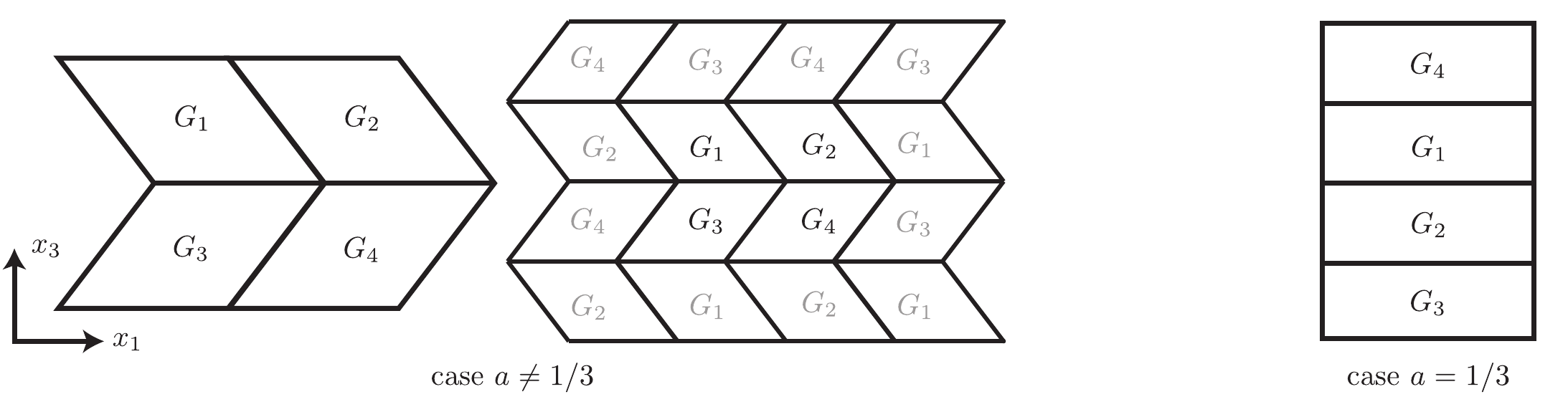}%
\caption{
The four matrices $G_j$ defined over the domain $\mathcal{D}$, case $a\neq -1/3$ (left panel, at different frequencies), case $a= -1/3$ (right panel). When $a=b=c=0$ the acute angles in the rhomboids measure $\pi/4$. 
}
\label{fig:microstructure}
\end{figure}

\noindent
Denote with $\widetilde{H}(x)$ the periodic
extension of $H(x)$ in $\R^3$ and define $\forall
\,n\in\mathbb{N}$
\begin{eqnarray}
 F_{n}(x):=\widetilde{H}(n\,x_1 ,n\,x_2,n\,x_3),\nonumber
\end{eqnarray}
\begin{eqnarray}\label{defQn}
 Q_{n}(x):=\frac{ F_n+F_n^T }{2}(x).
\end{eqnarray}
We denote with $\mathcal{F}_j^{k}$, where $j=1,\dots, 4$ and $k=1,\dots, N(n)$ the rescaled copies of the sets $\mathcal{F}_j$ defined in 
(\ref{1710262252}) and (\ref{1710262248}). There follows
\begin{displaymath}\label{1711062300}
F_n(x) =\left\{ \begin{array}{ccc}  
 G_1  &  \textrm{on } \mathcal{F}_1^k \\
   G_2 &  \textrm{on } \mathcal{F}_2^k\\
   G_3 &  \textrm{on } \mathcal{F}_{3}^k\\
 G_4  &  \textrm{on } \mathcal{F}_{4}^k,
\end{array} \right.\qquad
Q_n(x) =\left\{ \begin{array}{ccc}  
 Q_1  &  \textrm{on } \mathcal{F}_1^k \\
   Q_2 &  \textrm{on } \mathcal{F}_2^k\\
   Q_3 &  \textrm{on } \mathcal{F}_{3}^k\\
 Q_4  &  \textrm{on } \mathcal{F}_{4}^k,
\end{array} \right.
\end{displaymath}
for $k=1,\dots,N(n)$.
Thanks to (\ref{1710251803}) 
we have, for every open set $U\subset\R^3$
$$
 F_{n}(x)\stackrel{}{\rightharpoonup} Q\,\,\,\textrm{and }
 \,Q_{n}(x)\stackrel{}{\rightharpoonup} Q\,\,
\textrm{weakly in }L^{2}(U,\R^{3\times 3})\textrm{ as }n\to+\infty
$$
thus ensuring $i)$ and $ii)$.

To show $iii)$, remember the
construction of the matrices $G_j$ over the sets $\mathcal{F}_j^k$ is  kinematically compatible.
In other words, for every $n\in \N$, there exists a vector field $\vect f_n:U\to\R^3$ such that $\nabla \vect f_n(x)=F_n(x)$ 
and for which it is easy to prove that $\| \vect f_n(x)-Q .x\|_{L^{\infty}(U,\R^{3})}\leq C/n$ and therefore $iii)$ follows.

\vspace{6em}
\begin{multicols}{2}
\noindent Pierluigi Cesana \\
Institute of Mathematics for Industry, \\
Kyushu University, Fukuoka, Japan
\\ 
and\\
Department of Mathematics and Statistics,\\
 La Trobe University, Australia

\noindent e-mail: \url{cesana@math.kyushu-u.ac.jp}

\columnbreak

\noindent Andrés A. León Baldelli. \\
Institute of Mechanical Sciences and Industrial Applications\\
CNRS UMR 9219\\
\noindent e-mail: \url{leon.baldelli@cnrs.fr}

\end{multicols}

\printbibliography

\end{document}